\theoremstyle{definition}
\newtheorem{definition}{Definition}[section]
\theoremstyle{plain}
\newtheorem{theorem}[definition]{Theorem}
\newtheorem{prop}[definition]{Proposition}
\newtheorem{lem}[definition]{Lemma}
\newcommand{\mb}{\mathbb}
\newcommand{\mc}{\mathcal}
\newcommand{\mf}{\mathfrak}
\newcommand{\bs}{\boldsymbol}
\newcommand{\Det}{\textup{Det}}
\begin{document}

\title{Sums of reciprocals of fractional parts over aligned boxes}

\author{Reynold Fregoli}
\address{Department of Mathematics\\ 
University of York\\ 
Heslington, York, YO10 5DD\\ 
UK}
\email{reynold.fregoli@york.ac.uk}

%    General info
\subjclass{Primary ; Secondary }
\date{\today, and in revised form ....}

\dedicatory{}

\keywords{}

\begin{abstract}
In this paper, we prove new upper bounds for sums of reciprocals of fractional parts over general aligned boxes, thus extending a previous result of the author concerning bounds for sums of reciprocals over symmetric boxes. These new upper bounds depend solely on the volume of the boxes, and not on their diameter. This generalisation relies a novel lattice-point counting technique involving estimates for the higher successive minima of certain naturally arising lattices.
\end{abstract}

\maketitle

\section{Introduction}

\subsection{Notation}

We denote by $|\bs{v}|_{2}$ the Euclidean norm of a vector $\bs{v}\in\mb{R}^{n}$, and we denote by $|\bs{v}|_{\infty}$ its maximum norm. We denote by $\|x\|$ the distance from $x\in\mb{R}$ to the nearest integer, and by $\mb{N}$ the set $\{1,2,3\dotsc\}$ of positive integers. For any set $X$ and any pair of functions $f,g:X\to\mb{R}$ we write $f\ll g$ ($f\gg g$) when there exists a real constant $c>0$ such that $f(x)\leq cg(x)$ ($f(x)\geq cg(x)$) for all $x\in X$. If the constant $c>0$ depends on any parameters, we indicate these next to the symbol $\ll$ ($\gg$). For $i=1,\dotsc,n$ the phrase "$i$-th successive minimum" referred to a lattice $\Lambda\subset\mb{R}^{n}$ will indicate the infimum of all real numbers $r>0$ such that the set $\Lambda\cap B(\bs{0},r)\subset\mb{R}^{n}$ contains at least $r$ linearly independent distinct vectors, where $B(\bs{0},r)\subset\mb{R}^{n}$ stands for the ball of radius $r$ and centre located at the origin.

\subsection{Background and Main Results}

In this manuscript, we prove new general upper bounds for sums of reciprocals of fractional parts, which fully complement analogous lower bounds obtained by L\^e and Vaaler in \cite{LeVaaler:Sumsof}. In \cite{Fregoli:Onacounting}, we had proved similar upper bounds under additional symmetry hypotheses dictated by the particular techniques used to approach the problem, which we now remove. These techniques involved a generalised notion of weak admissibility for lattices \cite{Widmer:WeakAdmiss}, and a powerful lattice-point counting estimate for fibres of o-minimal structures \cite{Widmer:CountingLattice}. In the more general case treated here, however, the potential distortion of the domain over which the sums in question are defined requires a novel and more careful argument. This argument consists of two main components: a sophisticated partition method developed by Widmer and the author in \cite{Widmer:AsymptoticDiophantine} and \cite{Fregoli:Onacounting}, and a novel lattice-point counting technique based on estimates for the higher successive minima of certain lattices, which constitutes the main contribution of this paper. Before presenting the main result and its proof, we proceed to recall a few key facts about sums of reciprocals of fractional parts and estimates thereof.

Let $\mc{M},\mc{N}\in\mb{N}$ and let $\bs{L}\in\mb{R}^{\mc{M}\times \mc{N}}$ be a matrix with $\mc{M}$ rows and $\mc{N}$ columns. We indicate by $L_{1},\dotsc,L_{\mc{M}}$ the rows of the matrix $\bs{L}$ and we denote by 
$$L_{i}\bs{x}:=\sum_{j=1}^{\mc{N}}L_{ij}x_{j}$$
the product of the $i$-th row of $\bs{L}$ by the vector $\bs{x}\in\mb{R}^{\mc{N}}$. Throughout, as a technical assumption, we require that the entries $L_{i1},\dotsc,L_{i\mc{N}}$ of of the $i$-th row of the matrix $\bs{L}$ along with the integer $1$ are linearly independent over $\mb{Q}$ for $i=1,\dotsc,\mc{M}$.

Let $\bs{L}\in\mb{R}^{\mc{M}\times \mc{N}}$ be a matrix as above, and let $\bs{Q}:=(Q_{1},\dotsc,Q_{\mc{N}})\in[1,+\infty)^{\mc{N}}$. The main object discussed in this paper is the following family of functions depending on the parameter $\bs{L}$:
\begin{equation}
\label{eq:intro1}
S_{\bs{L}}(\bs{Q}):=\sum_{\bs{q}\in\prod_{j=1}^{\mc{N}}[-Q_{j},Q_{j}]\cap\in\mb{Z}^{\mc{N}}\setminus\{\bs{0}\}}\prod_{i=1}^{\mc{M}} \|L_{i}\bs{q}\|^{-1}.
\end{equation}
\noindent Sums of this shape are commonly known as sums of reciprocals of fractional parts and have been widely studied in light of their numerous applications, spanning, e.g., the Geometry of Numbers \cite{HarLi:SomeProblems2}, the Theory of Uniform Distribution \cite{KuipersNiederr:UnifDistr}, Diophantine Approximation \cite{HuangLiu:Simultaneous}, and many other strands of Mathematics. We refer the reader to \cite{Beresnevich:Sumsofandmult},\cite[Applications I]{Fregoli:Onacounting},\cite{Fregoli:Sumsof}, and \cite{LeVaaler:Sumsof} for a deeper insight.

In \cite{LeVaaler:Sumsof}, L\^e and Vaaler obtained general lower bounds for the functions $S_{\bs{L}}(\bs{Q})$ in terms of the quantity $Q:=(Q_{1}\dotsc Q_{\mc{N}})^{1/\mc{N}}$, i.e., the geometric mean of the variables\footnote{In what follows, the function $S_{\bs{L}}(\bs{Q})$ will be regarded as a function of the variables $Q_{1},\dotsc,Q_{\mc{N}}$, whereas the entries of the matrix $\bs{L}$ will be regarded as fixed parameters.} $Q_{1},\dotsc,Q_{\mc{N}}$ \cite[Corollary 1.2]{LeVaaler:Sumsof}. Their main result is the following.  

\begin{theorem}[L\^e-Vaaler]
For all $\bs{L}\in\mb{R}^{\mc{M}\times\mc{N}}$ and all $\bs{Q}\in[1,+\infty)^{\mc{N}}$ we have that
\begin{equation}
\label{eq:LeVaaler}
S_{\bs{L}}(\bs{Q})\gg_{\mc{M},\mc{N}}Q^{\mc{N}}(\log Q)^{\mc{M}},
\end{equation}
where $Q:=\left(Q_{1}\dotsm Q_{\mc{N}}\right)^{1/\mc{N}}$.
\end{theorem}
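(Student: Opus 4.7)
The approach I would take is a discretisation plus lattice-point-counting strategy. Since the $\mb{Q}$-linear independence hypothesis guarantees $\|L_i\bs{q}\|>0$ for every nonzero $\bs{q}\in\mb{Z}^{\mc{N}}$, one has the elementary identity $\lfloor \|L_i\bs{q}\|^{-1}\rfloor = \#\{k\in\mb{N}:k\|L_i\bs{q}\|\leq 1\}$. Multiplying this over $i$ and exchanging summations gives
\begin{equation*}
S_{\bs{L}}(\bs{Q})\;\geq\;\sum_{\bs{q}\neq \bs{0}}\prod_{i=1}^{\mc{M}}\lfloor\|L_i\bs{q}\|^{-1}\rfloor\;=\;\sum_{\bs{k}\in\mb{N}^{\mc{M}}} N(\bs{Q},\bs{k}),
\end{equation*}
where $N(\bs{Q},\bs{k})$ is the number of nonzero $\bs{q}\in\mb{Z}^{\mc{N}}\cap\prod_{j}[-Q_j,Q_j]$ satisfying $\|L_i\bs{q}\|\leq 1/k_i$ for every $i$. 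This turns the theorem into a lower-bound problem for the counts $N(\bs{Q},\bs{k})$.

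To estimate these counts geometrically, I would introduce the unimodular lattice
\begin{equation*}
\Lambda_{\bs{L}}\;:=\;\bigl\{(\bs{q},\,L_1\bs{q}-p_1,\ldots,L_{\mc{M}}\bs{q}-p_{\mc{M}}):\bs{q}\in\mb{Z}^{\mc{N}},\ \bs{p}\in\mb{Z}^{\mc{M}}\bigr\}\;\subset\;\mb{R}^{\mc{N}+\mc{M}},
\end{equation*}
which has determinant $1$. Up to $O_{\mc{M}}(1)$ points coming from $\bs{q}=\bs{0}$, $N(\bs{Q},\bs{k})$ equals the number of nonzero points of $\Lambda_{\bs{L}}$ inside the symmetric convex box $B(\bs{Q},\bs{k})=\prod_j[-Q_j,Q_j]\times\prod_i[-1/k_i,1/k_i]$, whose volume is $2^{\mc{N}+\mc{M}}Q^{\mc{N}}/\prod_i k_i$.

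Next, I would apply the classical van der Corput-type bound $\#(\Lambda\cap K)\geq 2^{-n}\Vol(K)/\det(\Lambda)$, valid for every symmetric convex body $K\subset\mb{R}^n$ and every full-rank lattice $\Lambda$. (A clean proof integrates $\bs{f}\mapsto\#(\Lambda\cap(K-\bs{f}))$ over a fundamental domain of $\Lambda$ to obtain $\Vol(K)$, then bounds the integrand above by $\#(\Lambda\cap(K-K))=\#(\Lambda\cap 2K)$ using the difference-set trick and the symmetry of $K$.) This yields
\begin{equation*}
N(\bs{Q},\bs{k})\;\gg_{\mc{M},\mc{N}}\;\frac{Q^{\mc{N}}}{\prod_i k_i}
\end{equation*}
whenever $\prod_i k_i\leq c_{\mc{M},\mc{N}}Q^{\mc{N}}$ for a suitable constant. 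Restricting the outer summation to $1\leq k_i\leq (c_{\mc{M},\mc{N}}Q^{\mc{N}})^{1/\mc{M}}$, so that this condition holds automatically, and using $\sum_{k=1}^{K}1/k\asymp\log K$, one obtains
\begin{equation*}
S_{\bs{L}}(\bs{Q})\;\gg_{\mc{M},\mc{N}}\;Q^{\mc{N}}\,(\log Q)^{\mc{M}},
\end{equation*}
as required.

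The crux of the argument is the uniform lattice-point lower bound in the third step. A pleasing feature of the van der Corput inequality is that it requires no Diophantine hypothesis on $\bs{L}$ and no balancing condition on the box---only symmetry and convexity---so it applies here with constants depending only on $\mc{M}$ and $\mc{N}$, as needed. The discretisation and summation steps are then essentially combinatorial; the obstruction that would arise if one attempted a Fourier-analytic approach (namely controlling off-diagonal Dirichlet-kernel terms uniformly in $\bs{L}$ when the rows of $\bs{L}$ may be $\mb{Q}$-linearly dependent) is neatly sidestepped.
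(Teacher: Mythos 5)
The paper does not prove this theorem at all --- it is quoted verbatim from L\^e and Vaaler \cite{LeVaaler:Sumsof} --- and your argument is essentially their original one: discretise via $\|x\|^{-1}\geq\lfloor\|x\|^{-1}\rfloor=\#\{k\in\mb{N}:k\|x\|\leq 1\}$, swap summations to reduce to counting points of the unimodular lattice $\Lambda_{\bs{L}}$ in symmetric aligned boxes, and apply the Blichfeldt/van der Corput lower bound $\#(\Lambda\cap K)\geq 2^{-n}\Vol(K)/\det\Lambda$. The proof is correct as outlined; in a full write-up you should only make explicit the bounded multiplicity in $\bs{p}$ (each admissible $\bs{q}$ yields between $1$ and $3^{\mc{M}}$ lattice points, and there are at most $3^{\mc{M}}$ points with $\bs{q}=\bs{0}$, which is precisely why the volume condition $\prod_{i}k_{i}\ll_{\mc{M},\mc{N}}Q^{\mc{N}}$ is imposed) and the trivial range $Q\ll_{\mc{M},\mc{N}}1$, where the right-hand side of the claimed bound is $O_{\mc{M},\mc{N}}(1)$ while $S_{\bs{L}}(\bs{Q})\geq 2^{\mc{M}}$.
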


\noindent L\^e and Vaaler further asked whether Inequality (\ref{eq:LeVaaler}) is sharp, i.e., whether there exist matrices $\bs{L}\in\mb{R}^{\mc{M}\times\mc{N}}$ such that the reverse inequality holds \cite[Equations 1.4 and 1.5]{LeVaaler:Sumsof}. As a partial answer to this question, they proved that (\ref{eq:LeVaaler}) is sharp whenever the matrix $\bs{L}$ is multiplicatively badly approximable \cite[Theorem 2.1]{LeVaaler:Sumsof}. For the convenience of the reader, we recall the general definition of multiplicative bad approximability below.

\begin{definition}
\label{def:multbad}
Let $\phi:[0,+\infty)\to (0,1]$ be a non-increasing\footnote{We say that $\phi$ is non-increasing if $\phi(x)\geq\phi(y)$ for all $x<y$.} real function. We say that a matrix $\bs{L}\in\mb{R}^{\mc{M}\times\mc{N}}$ is $\phi$-multiplicatively badly approximable if
\begin{equation}
\label{eq:condition1}
\prod_{j=1}^{\mc{N}}\max\left\{1,|q_{j}|\right\}\prod_{i=1}^{\mc{M}}\|L_{i}\bs{q}\|\geq\phi\left(\left(\prod_{j=1}^{\mc{N}}\max\left\{1,|q_{j}|\right\}\right)^{\frac{1}{\mc{N}}}\right)
\end{equation}
for all $\bs{q}\in\mb{Z}^{\mc{N}}\setminus\{\bs{0}\}$. If $\phi$ is a constant function, we simply say that the matrix $\bs{L}$ is multiplicatively badly approximable.
\end{definition} 

\cite[Theorem 2.1]{LeVaaler:Sumsof} suggests that upper bounds for the sum $S_{\bs{L}}(\bs{Q})$ should depend on the multiplicative Diophantine type of the matrix $\bs{L}$, i.e., on the shape of the function $\phi$ in Definition \ref{def:multbad}. This is also corroborated by analogous results in lower-dimensional cases \cite{Fregoli:Sumsof}. However, the existence of multiplicatively badly approximable matrices for $\mc{M}+\mc{N}>2$ hasn't been established yet. In particular, it is well-known that each multiplicatively badly approximable matrix for $\mc{M}+\mc{N}>2$ would provide a counterexample to the famous Littlewood Conjecture \cite{Bugeaud:Around}, which to date remains unsolved. Due to the breakthrough achieved by Einsiedler, Lindenstrauss, and Katok \cite{EinseidlerKatokLindenstrauss:Invariant}, who showed that the set of counterexamples to this conjecture has zero Hausdorff dimension, a wide part of the mathematical community believes that the Littlewood Conjecture is true. Hence, \cite[Theorem 2.1]{LeVaaler:Sumsof} does not fully answer L\^e and Vaaler's  question on the sharpness of (\ref{eq:LeVaaler}). In particular, it would be desirable to prove the sharpness of their estimate unconditionally.

In \cite[Corollary 1.8]{Fregoli:Onacounting}, we improved on a special case of \cite[Theorem 2.1]{LeVaaler:Sumsof}. Namely, we showed that, whenever the condition $Q_{1}=\dotsb =Q_{\mc{N}}=Q$ holds, for any $\phi$-semimultiplicatively badly approximable\footnote{We recall that a matrix $\bs{L}\in\mb{R}^{\mc{M}\times\mc{N}}$ is $\phi$-semimultiplicatively badly approximable if
$|\bs{q}|_{\infty}^{\mc{N}}\prod_{i=1}^{\mc{M}}\|L_{i}\bs{q}\|\geq\phi\left(|\bs{q}|_{\infty}\right)$
for all $\bs{q}\in\mb{Z}^{\mc{N}}\setminus\{\bs{0}\}$. This condition is weaker than (\ref{eq:condition1}), since $\prod_{j=1}^{\mc{N}}\max\left\{1,|q_{j}|\right\}\leq|\bs{q}|_{\infty}$ for all $\bs{q}\in\mb{Z}^{\mc{N}}\setminus\{\bs{0}\}$.} matrix $\bs{L}$ we have the following upper bound:
\begin{equation}
\label{eq:Fre1}
S_{\bs{L}}(\bs{Q})\ll_{\mc{M},\mc{N}}Q^{\mc{N}}\log\left(\frac{Q}{\phi(Q)}\right)^{\mc{M}}+\frac{Q^{\mc{N}}}{\phi(Q)}\log\left(\frac{Q}{\phi(Q)}\right)^{\mc{M}-1}.
\end{equation}

This implies that, whenever the matrix $\bs{L}$ is $\phi$-semimultiplicatively badly approximable with $\phi(x)\gg\log(x)^{-1}$, the bound in (\ref{eq:LeVaaler}) becomes sharp for all values of $Q=Q_{1}=\dotsb Q_{\mc{N}}\geq 2$. Note that \cite[Theorem 2.1]{LeVaaler:Sumsof} requires $\phi$ to be a constant function, which is a much stronger hypothesis in comparison with $\phi(x)\gg\log(x)^{-1}$. More precisely, the existence of $\phi$-semimultiplicatively (and $\phi$-multiplicatively) badly approximable matrices $\bs{L}$ with $\phi(x)\gg\log(x)^{-1}$ is connected with a natural complement to the Littlewood Conjecture, discussed, e.g., in \cite{Fregoli:Multiplicatively}, \cite{BadziahinVelani:MultiplicativelyBadly}, and \cite{Badziahin:OnMultiplicatively}. In particular, continuum-many such matrices are conjectured to exist, showing the relevance of (\ref{eq:Fre1}) for the problem posed by L\^e and Vaaler.

In this paper, we prove that a similar estimate to (\ref{eq:Fre1}) holds for arbitrary values of the variables $Q_{1},\dotsc,Q_{\mc{N}}$ in $[1,+\infty)$, thus extending our previous result. However, while in the case when $Q_{1}=\dotsb =Q_{\mc{N}}$, semimultiplicative bad approximability is a robust enough condition to prove an upper bound for the sum $S_{\bs{L}}(\bs{Q})$, in the general case, we need to assume a stronger hypothesis. More specifically, we now assume that the matrix $\bs{L}$ is $\phi$-multiplicatively badly approximable. Note that this assumption does not affect our improvement on L\^e and Vaaler upper bounds, since the considerations in \cite{BadziahinVelani:MultiplicativelyBadly} and \cite{Fregoli:Multiplicatively} hold both for $\phi$-semimultiplicatively and for $\phi$-multiplicatively badly approximable matrices with $\phi(x)\gg\log(x)^{-1}$.

The following result represents the main contribution of this paper.

\begin{theorem}
\label{thm:cor2}
Let $\bs{L}\in\mb{R}^{\mc{M}\times\mc{N}}$ be a $\phi$-multiplicatively badly approximable matrix. Then, we have that
\begin{equation}
S_{\bs{L}}(\bs{Q})\ll_{\mc{M},\mc{N}}Q^{\mc{N}}\log\left(\frac{Q}{\phi(Q)}\right)^{\mc{M}}+\frac{Q^{\mc{N}}}{\phi(Q)}\log\left(\frac{Q}{\phi(Q)}\right)^{\mc{M}-1}\nonumber
\end{equation}
for all $\bs{Q}\in[1,+\infty)^{\mc{N}}$, provided $Q:=\left(Q_{1}\dotsm Q_{\mc{N}}\right)^{1/\mc{N}}\geq 2$.
\end{theorem}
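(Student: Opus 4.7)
The strategy is to follow the broad outline of \cite{Fregoli:Onacounting} -- reducing $S_{\bs{L}}(\bs{Q})$ to a lattice-point counting problem via a dyadic decomposition of the values $\|L_{i}\bs{q}\|$ -- but to replace the weak-admissibility input of that paper with new lower bounds for \emph{all} the successive minima of a naturally associated lattice. Setting $R_{i}:=2^{-b_{i}}$ and
$$N(\bs{Q},\bs{R}):=\#\{\bs{q}\in\mb{Z}^{\mc{N}}\setminus\{\bs{0}\}:|q_{j}|\leq Q_{j}\text{ and }\|L_{i}\bs{q}\|\leq R_{i}\text{ for all }i,j\},$$
a standard dyadic splitting gives $S_{\bs{L}}(\bs{Q})\ll\sum_{\bs{b}\in\mb{Z}_{\geq 0}^{\mc{M}}}(R_{1}\dotsm R_{\mc{M}})^{-1}N(\bs{Q},\bs{R})$. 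The $\phi$-multiplicative bad approximability of $\bs{L}$ restricts the outer sum to $\sum_{i}b_{i}\leq\mc{N}\log Q-\log\phi(Q)+O_{\mc{N}}(1)$, a range of cardinality $\asymp\log(Q/\phi(Q))^{\mc{M}}$.

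Next, $N(\bs{Q},\bs{R})$ equals the number of nonzero points of the covolume-one lattice $\Lambda_{\bs{L}}:=\{(\bs{q},\bs{L}\bs{q}-\bs{p}):\bs{q}\in\mb{Z}^{\mc{N}},\,\bs{p}\in\mb{Z}^{\mc{M}}\}$ lying inside the aligned box $B:=\prod_{j=1}^{\mc{N}}[-Q_{j},Q_{j}]\times\prod_{i=1}^{\mc{M}}[-R_{i},R_{i}]$. A standard consequence of Minkowski's second theorem bounds this count by $\prod_{k=1}^{\mc{N}+\mc{M}}\max\{1,\lambda_{k}^{-1}\}$, where $\lambda_{k}$ denotes the $k$-th successive minimum of $\Lambda_{\bs{L}}$ with respect to $B$. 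In the symmetric case $Q_{1}=\dotsb=Q_{\mc{N}}$ treated in \cite{Fregoli:Onacounting}, a lower bound for $\lambda_{1}$ alone (via weak admissibility) sufficed; however, for strongly elongated $B$ the first minimum can be much smaller than the higher $\lambda_{k}$, or several $\lambda_{k}$ can be small simultaneously, so every $\lambda_{k}$ must now be controlled individually from below.

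This is the core new technical input. I would combine the partition method of Widmer and the author in \cite{Widmer:AsymptoticDiophantine, Fregoli:Onacounting} -- which subdivides $\prod_{j}[-Q_{j},Q_{j}]$ dyadically into sub-boxes where $|q_{j}|\asymp 2^{a_{j}}$ and each $\max\{1,|q_{j}|\}$ is essentially constant -- with a Mahler-type analysis of the sublattices of $\Lambda_{\bs{L}}$ generated by $k$ short vectors. Concretely, given linearly independent $\bs{v}_{\ell}=(\bs{q}_{\ell},\bs{L}\bs{q}_{\ell}-\bs{p}_{\ell})\in\lambda_{k}B$ for $\ell=1,\dotsc,k$, and noting that the regime $\lambda_{k}R_{i}<1/2$ forces $\bs{q}_{\ell}\ne\bs{0}$, applying $\phi$-multiplicative bad approximability to each $\bs{q}_{\ell}$ separately and combining via a Gram-Schmidt determinant estimate on the sublattice they generate should yield a joint lower bound for $\lambda_{1}\dotsm\lambda_{k}$ in terms of $\phi$, the cell sizes $2^{a_{j}}$, and the $R_{i}$. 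The main obstacle is ruling out collinearity of the projections $\bs{q}_{1},\dotsc,\bs{q}_{k}$: here the standing $\mb{Q}$-linear independence hypothesis on the rows of $\bs{L}$ together with $1$ must be invoked to prevent non-trivial rational combinations of the $\bs{v}_{\ell}$ from lying entirely in the ``integer fibre'' $\{\bs{0}\}\times\mb{R}^{\mc{M}}$ while still fitting inside $B$.

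With these higher-successive-minima bounds in hand uniformly across the partition cells, what remains is essentially bookkeeping: feed the estimates into $N(\bs{Q},\bs{R})\ll\prod_{k}\max\{1,\lambda_{k}^{-1}\}$, sum over $\bs{a}$ and $\bs{b}$ in their permissible ranges, and isolate the two natural contributions. Generic values of $\bs{b}$ yield the bulk term $Q^{\mc{N}}\log(Q/\phi(Q))^{\mc{M}}$, while the extremal $\bs{b}$ saturating the $\phi$-multiplicative bad approximability inequality contribute the edge term $Q^{\mc{N}}\phi(Q)^{-1}\log(Q/\phi(Q))^{\mc{M}-1}$. Their sum matches the upper bound claimed in the theorem.
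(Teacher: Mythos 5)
Your overall architecture --- dyadic decomposition, reduction to counting points of the unipotent lattice $\Lambda_{\bs{L}}$ in aligned boxes, and control of \emph{all} successive minima rather than just the first --- matches the paper's strategy, and your bookkeeping at the two ends (the restriction $\sum_i b_i\ll\log(Q/\phi(Q))$ forced by multiplicative bad approximability, and the split into a bulk term and an edge term) is sound. The decomposition itself differs only cosmetically: the paper decomposes dyadically in the single product $\prod_i\|L_i\bs{q}\|$ and then partitions the resulting hyperbolic region via \cite[Proposition 2.1]{Fregoli:Onacounting} into $\log(\cdot)^{\mc{M}-1}$ cells, whereas you decompose each $\|L_i\bs{q}\|$ separately into $\log(\cdot)^{\mc{M}}$ aligned boxes; either route gives the same logarithmic count.

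There is, however, a genuine gap exactly where the paper's main contribution lies: the joint lower bound for $\lambda_{1}\dotsm\lambda_{k}$ is asserted (``should yield'') rather than proved, and the obstacle you single out is not the real one. Ruling out that the short vectors lie in the fibre $\{\bs{q}=\bs{0}\}$ is easy (Lemma \ref{lem:qneq0}: a one-line consequence of the $\mb{Q}$-linear independence hypothesis and part $iia)$ of Proposition \ref{thm:partition}). The actual difficulty is this: when you apply multiplicative bad approximability together with an arithmetic--geometric mean inequality to a single short vector $\bs{v}_{\ell}$, only the \emph{nonzero} coordinates $q_{\ell,j}$ enter the geometric mean. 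If $q_{\ell,j}=0$ for some $j$ with $Q_{j}$ very large, the small normalising factor $Q/Q_{j}$ cannot be absorbed, and the lower bound on $|\bs{v}_{\ell}|$ degrades from a power of $Q$ to a power of $Q_{\max}$ --- precisely the dependence the theorem must avoid. A Gram--Schmidt determinant estimate on the sublattice generated by the $\bs{v}_{\ell}$ does not by itself repair this (a pure determinant argument is what the paper uses only in the degenerate case, Lemma \ref{lem:slowcase}, and there it yields the weaker factor $\varepsilon Q^{\mc{N}}$ rather than the $\phi$-dependent one). The paper's fix is to choose a Mahler--Weyl basis with nested supports $\mathfrak{I}(\bs{v}^{s})\subseteq\mathfrak{I}(\bs{v}^{s+1})$ (Lemma \ref{lem:monotonicity}), reorder coordinates to obtain a triangular support pattern (Lemma \ref{lem:monotonicity2}), and then run a \emph{weighted} AM--GM whose weights $k_{s}$ are defined recursively by (\ref{eq:exponentbasecase}) and (\ref{eq:from40}) so that every factor $Q/Q_{j}$ is picked up, with the correct multiplicity, by the first basis vector whose $j$-th coordinate is nonzero; Lemma \ref{lem:exponentswelldef} is what guarantees these weights are well defined and give the exponent $\underline{s}/(\underline{s}+1)<1$. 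Until you supply an argument of this kind, your bound $N(\bs{Q},\bs{R})\ll\prod_{k}\max\{1,\lambda_{k}^{-1}\}$ only yields an estimate in terms of $Q_{\max}$, not $Q$.
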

 
Note how this estimate only depends on the geometric mean of the variables $Q_{1},\dotsc,Q_{\mc{N}}$ and not on their maximum $Q_{\max}:=\max\{Q_{1},\dotsc,Q_{\mc{N}}\}$. In other words, only the volume of the box where the variable $\bs{q}$ ranges in (\ref{eq:intro1}) determines upper bounds for the function $S_{\bs{L}}(\bs{Q})$. This property, which also characterises L\^e and Vaaler's lower bounds (\ref{eq:LeVaaler}), requires a substantially more elaborate proof in comparison with the case when $Q_{1}=\dotsb =Q_{\mc{N}}$ \cite[Corollary 1.8]{Fregoli:Onacounting}. In fact, the techniques used in \cite{Fregoli:Onacounting} are insufficient to obtain a bound independent of the function $Q_{\max}$. In the remainder of this section, we attempt to give a quick insight into the distinctive features of the non-symmetric case (i.e., when the variables $Q_{i}$ take arbitrary values), in order to highlight the novelty of our approach to this problem.

\subsection{Broad Brush Description of Methods}

To prove Theorem \ref{thm:cor2}, we estimate the cardinality of the following typical sets:
\begin{multline*}
M(\bs{L},\varepsilon,T,\bs{Q}):=\left\{(\bs{p},\bs{q})\in\mb{Z}^{\mc{M}}\times\left(\mb{Z}^{\mc{N}}\setminus\{\bs{0}\}\right):\prod_{i=1}^{\mc{M}}\left|L_{i}\bs{q}+p_{i}\right|<\varepsilon,\right.\\
|L_{i}\bs{q}+p_{i}|\leq T,\ i=1,\dotsc,\mc{M},\ \ |q_{j}|\leq Q_{j},\ j=1,\dotsc,\mc{N}\Bigg\},
\end{multline*}
where $\varepsilon,T\in(0,+\infty)$. Once an estimate of the sort is established, the proof of Theorem \ref{thm:cor2} will follow directly from a standard dyadic argument, presented in Section \ref{sec:proofofthm}.

Estimating the quantity $\#M(\bs{L},\varepsilon,T,\bs{Q})$ can be reduced to a lattice-point counting problem. We show how this is done in the next few lines. Henceforth, we write $\bs{v}=(\bs{x},\bs{y})$, with $\bs{x}\in\mb{R}^{\mc{M}}$ and $\bs{y}\in\mb{R}^{\mc{N}}$, for vectors $\bs{v}\in\mb{R}^{\mc{M}+\mc{N}}$. We let
$$ \bs{A}_{\bs{L}}:=\left(\begin{array}{@{}c|c@{}}
    \bs{I}_{\mc{M}} & \!\!\!\!\bs{L} \\\hline
    \bs{0} & \accentset{\phantom{a}}{\bs{I}}_{\mc{N}}
  \end{array}\right)\in\mb{R}^{(\mc{M}+\mc{N})\times(\mc{M}+\mc{N})},$$
where $\bs{I}_{\mc{M}}$ and $\bs{I}_{\mc{N}}$ are identity matrices of size $\mc{M}$ and $\mc{N}$ respectively, and we let $\Lambda_{\bs{L}}:=\bs{A}_{\bs{L}}\mb{Z}^{\mc{M}+\mc{N}}\subset\mb{R}^{\mc{M}+\mc{N}}$. We further define the sets
\begin{equation*}
H:=\left\{\bs{x}\in\mb{R}^{\mc{M}}:\prod_{i= 1}^{\mc{M}}\left|x_{i}\right|<\varepsilon,\ |x_{i}|\leq T,\ i=1,\dotsc,\mc{M}\right\}
\end{equation*}
and $Z:=H\times\prod_{j=1}^{\mc{N}}[-Q_{j},Q_{j}]\subset\mb{R}^{\mc{M}+\mc{N}}$. Then, we have that
\begin{equation}
\label{eq:intersection}
\# M(\bs{L},\varepsilon,T,\bs{Q})=\#\left(\left(\Lambda_{\bs{L}}\cap Z\right)\setminus C\right),
\end{equation}
where $C:=\{\bs{y}=\bs{0}\}\subset\mb{R}^{\mc{M}+\mc{N}}$. Since $\Lambda_{\bs{L}}\cap C=\mb{Z}^{\mc{M}}\times\{\bs{0}\}$, to estimate $\# M(\bs{L},\varepsilon,T,\bs{Q})$, it simply suffices to determine the cardinality of the set $\Lambda_{\bs{L}}\cap Z$. Thus, the problem is reduced to counting the points of a particular unipotent lattice, induced by the matrix $\bs{L}$, inside a family of non-convex sets $Z$, defined by the parameters $\varepsilon$ and $T$.

Up to Equation (\ref{eq:intersection}), the strategy presented here coincides with that presented in \cite{Fregoli:Onacounting}, which the reader may keep in mind as a useful term of comparison. However, in this setting, we can no longer apply \cite[Theorem 1.5]{Fregoli:Onacounting} to estimate $\#(\Lambda_{\bs{L}}\cap Z)$. This follows from the fact that the lattice $\Lambda_{\bs{L}}$ does not have the right type of weak admissibility to apply this particular result (see \cite[Definition 1.3]{Fregoli:Onacounting}). More precisely, in order to obtain an estimate that would feature only the geometric mean of the variables $Q_{1},\dotsc,Q_{\mc{N}}$ via \cite[Theorem 1.5]{Fregoli:Onacounting}, the lattice $\Lambda_{\bs{L}}$ would have to be weakly admissible for the couple $((\bs{l},\bs{\alpha}),C)$, where $\bs{l}=\bs{\alpha}:=(1,\dotsc,1)\in\mb{R}^{\mc{M}+\mc{N}}$ and $C:=\{\bs{y}=\bs{0}\}$. This would imply that for all $(\bs{p},\bs{q})\in\mb{Z}^{\mc{M}}\times\left(\mb{Z}^{\mc{N}}\setminus\{\bs{0}\}\right)$ we have that
$$\prod_{j=1}^{\mc{N}}|q_{j}|\prod_{i=1}^{\mc{M}}|L_{i}\bs{q}+p_{i}|>0,$$
which does not necessarily hold, due to the maximum functions appearing in the definition of multiplicative bad approximability (see (\ref{eq:condition1})). Nonetheless, we can obtain an upper bound for the quantity $\#(\Lambda_{\bs{L}}\cap Z)$, even though not an asymptotic estimate as in the symmetric case (compare with \cite[Proposition 1.2]{Fregoli:Onacounting}), by using a more subtle argument.
 
\begin{prop}
\label{prop:cor1}
Let $\bs{L}\in\mb{R}^{\mc{M}\times \mc{N}}$ be a $\phi$-multiplicatively badly approximable matrix and suppose that $T^{\mc{M}}/\varepsilon\geq e^{\mc{M}}$, where $e=2.71828\dots$ is the base of the natural logarithm. Then,
\begin{equation}
\label{eq:cor1}
\#\left(\left(\Lambda_{\bs{L}}\cap Z\right)\setminus C\right)
\ll_{\mc{M},\mc{N}}(1+T)^{\mc{M}+\mc{N}-1}\log\left(\frac{T^{\mc{M}}}{\varepsilon}\right)^{\mc{M}-1}\left[\varepsilon Q^{\mc{N}}+\left(\frac{\varepsilon Q^{\mc{N}}}{\phi\left(Q\right)}\right)^{\frac{\mc{M}+\mc{N}-1}{\mc{M}+\mc{N}}}\right],
\end{equation}
where $Q:=\left(Q_{1}\dotsm Q_{\mc{N}}\right)^{1/\mc{N}}$.
\end{prop}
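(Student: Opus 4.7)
The plan is to combine a dyadic partition of $Z\setminus C$ (in the spirit of the method developed in \cite{Widmer:AsymptoticDiophantine,Fregoli:Onacounting}) with a new successive-minima estimate for the lattice $\Lambda_{\bs{L}}$ on each piece of the partition. To each lattice point $(\bs{L}\bs{q}+\bs{p},\bs{q})\in\Lambda_{\bs{L}}\cap(Z\setminus C)$ I associate an integer vector $\bs{a}\in\mb{Z}^{\mc{M}}$ with $2^{a_i-1}\leq|L_i\bs{q}+p_i|<2^{a_i}$, recording the dyadic size of each $x_i$-coordinate, and an integer vector $\bs{b}\in\mb{Z}_{\geq 0}^{\mc{N}}$ with $\max(1,|q_j|)<2^{b_j}$. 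The constraints defining $Z$ force $a_i\leq\lceil\log_2 T\rceil$, $\sum_i a_i<\log_2\varepsilon+\mc{M}$, and $b_j\leq\lceil\log_2 Q_j\rceil$; the number of admissible $\bs{a}$ is $\ll\log(T^{\mc{M}}/\varepsilon)^{\mc{M}-1}$, which accounts for the logarithmic factor in the target bound (the hypothesis $T^{\mc{M}}/\varepsilon\geq e^{\mc{M}}$ guarantees this quantity is meaningful).

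On each dyadic cell, the relevant lattice points lie in the symmetric box
\begin{equation*}
B(\bs{a},\bs{b}):=\prod_{i=1}^{\mc{M}}[-2^{a_i},2^{a_i}]\times\prod_{j=1}^{\mc{N}}[-2^{b_j},2^{b_j}].
\end{equation*}
I will bound $\#(\Lambda_{\bs{L}}\cap B(\bs{a},\bs{b}))$ via the classical estimate $\ll_{\mc{M},\mc{N}}\prod_{k=1}^{\mc{M}+\mc{N}}\max(1,2/\lambda_k)$, where $\lambda_1\leq\dotsb\leq\lambda_{\mc{M}+\mc{N}}$ are the successive minima of $\Lambda_{\bs{L}}$ relative to $B(\bs{a},\bs{b})$. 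The first $\mc{M}$ minima are controlled from above by the standard basis vectors $\bs{e}_1,\dotsc,\bs{e}_{\mc{M}}\in\Lambda_{\bs{L}}$ corresponding to $\bs{q}=\bs{0}$, yielding $\lambda_k\leq 2^{-a_{(k)}}$ for $k\leq\mc{M}$, where $a_{(1)}\geq\dotsb\geq a_{(\mc{M})}$ is the decreasing rearrangement of $\bs{a}$.

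The novel ingredient, and the main contribution of the proposition, is a lower bound on the higher successive minimum $\lambda_{\mc{M}+1}$ extracted from the $\phi$-multiplicative bad approximability of $\bs{L}$. Since lattice vectors with $\bs{q}=\bs{0}$ span only an $\mc{M}$-dimensional subspace, any $\mc{M}+1$ linearly independent lattice vectors achieving $\lambda_{\mc{M}+1}$ must contain one of the form $(\bs{L}\bs{q}+\bs{p},\bs{q})$ with $\bs{q}\neq\bs{0}$. For such a vector of $B(\bs{a},\bs{b})$-norm at most $\mu$, combining the coordinate bounds $|L_i\bs{q}+p_i|\leq\mu\cdot 2^{a_i}$ and $|q_j|\leq\mu\cdot 2^{b_j}$ with (\ref{eq:condition1}), the monotonicity of $\phi$, and the trivial bound $\prod_j\max(1,|q_j|)\leq Q^{\mc{N}}$ yields an inequality of the shape
\begin{equation*}
\mu^{\mc{M}+\mc{N}}\cdot\Vol(B(\bs{a},\bs{b}))\gg\phi(Q),
\end{equation*}
and hence $\lambda_{\mc{M}+1}\gg(\phi(Q)/\Vol(B(\bs{a},\bs{b})))^{1/(\mc{M}+\mc{N})}$.

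Inserting this lower bound into the counting estimate, together with Minkowski's second theorem $\prod_k\lambda_k\asymp\Vol(B(\bs{a},\bs{b}))^{-1}$ and the upper bounds on the first $\mc{M}$ minima, balances two regimes for each cell: when all $\lambda_k\leq 1$ the count is at most $\Vol(B(\bs{a},\bs{b}))$, which after summation produces the volume contribution $\varepsilon Q^{\mc{N}}$; when the bad-approximability lower bound forces $\lambda_{\mc{M}+1}>1$, the count is smaller and carries the characteristic exponent $(\mc{M}+\mc{N}-1)/(\mc{M}+\mc{N})$. The factor $(1+T)^{\mc{M}+\mc{N}-1}$ arises from the $+1$ summands in the expansion of $\prod_k(1+2/\lambda_k)$, exploiting that each dyadic side length is bounded by $T$. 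Finally, summation over admissible $(\bs{a},\bs{b})$ -- the $\bs{b}$-sum telescoping into $Q^{\mc{N}}$ while the $\bs{a}$-sum produces the logarithmic power -- delivers the claimed bound. The main obstacle is the successive-minima step: (\ref{eq:condition1}) constrains only a single lattice vector at a time, so extracting a clean lower bound on $\lambda_{\mc{M}+1}$ uniform across the possible supports $\{j:q_j\neq\bs{0}\}$ and compatible with Minkowski's inequality requires the careful case analysis that lies at the heart of this proposition.
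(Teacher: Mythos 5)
Your overall architecture (dyadic/partition reduction to a box, Davenport-type count via successive minima, bad approximability as the source of lower bounds on minima) matches the paper's, but the step you yourself flag as "the main obstacle" is exactly where the proof breaks, and your proposal contains no mechanism to get past it. The claimed inequality $\mu^{\mc{M}+\mc{N}}\cdot\Vol(B(\bs{a},\bs{b}))\gg\phi(Q)$ is false in general: writing $J:=\{j:q_j\neq 0\}$ for the vector realizing $\lambda_{\mc{M}+1}$, condition (\ref{eq:condition1}) only gives
\begin{equation*}
\phi(Q)\leq\prod_{j\in J}|q_j|\prod_{i=1}^{\mc{M}}\|L_i\bs{q}\|\ll\mu^{\mc{M}+\#J}\,\varepsilon\prod_{j\in J}Q_j,
\end{equation*}
because the factors $\max(1,|q_j|)$ with $j\notin J$ equal $1$ and contribute no power of $\mu$ and no $Q_j$. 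Hence $\lambda_{\mc{M}+1}\gg\bigl(\phi(Q)/(\varepsilon\prod_{j\in J}Q_j)\bigr)^{1/(\mc{M}+\#J)}$, and for a proper subset $J$ the product $\prod_{j\in J}Q_j$ is controlled only by $Q_{\max}^{\#J}$, not by $Q^{\#J}$ (take $\mc{N}=2$, $Q_1=1$, $Q_2=Q^2$, $\bs{q}=(0,q_2)$). This is precisely the failure mode described around (\ref{eq:productexplanation}): any argument resting on a lower bound for a \emph{single} successive minimum inevitably produces an estimate in terms of $Q_{\max}$ rather than $Q$, which is the bound the proposition is specifically designed to avoid.

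The paper's actual proof circumvents this by never isolating one minimum: it takes a Mahler--Weyl basis $\bs{v}^1,\dotsc,\bs{v}^{\mc{M}+\mc{N}}$ with $|\bs{v}^s|_2\ll\lambda_s$, modifies it so the supports are nested ($\mathfrak{I}(\bs{v}^s)\subseteq\mathfrak{I}(\bs{v}^{s+1})$) and, after a permutation of the $y$-coordinates, triangular; since the first $s$ basis vectors cannot all lie in an $(s-1)$-dimensional coordinate subspace, a coordinate $j$ absent from $\bs{q}^1,\dotsc,\bs{q}^{l-2}$ must appear in $\bs{q}^{l-1}$, and a recursively weighted arithmetic--geometric mean inequality (the weights $k_s$ of Lemma \ref{lem:exponentswelldef}, chosen so that every ratio $Q/Q_j$ acquires the same total exponent across the product $\lambda_1\dotsm\lambda_{\underline{s}}$) redistributes the missing factors among the later basis vectors. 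A separate determinant argument (Lemma \ref{lem:slowcase}) handles the case where $\bs{v}^1,\dotsc,\bs{v}^{s_0}$ all lie in a coordinate subspace. Your outline would also need lower bounds on every partial product $\lambda_1\dotsm\lambda_s$ (not just on $\lambda_{\mc{M}+1}$ together with Minkowski's second theorem) to control $\prod_k\max(1,1/\lambda_k)$, and this is precisely what the paper's Proposition \ref{prop:firstminima} supplies. As written, your argument proves at best the analogue of (\ref{eq:cor1}) with $Q$ replaced by a quantity involving $Q_{\max}$.
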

\noindent Theorem \ref{thm:cor2} is a straightforward consequence of Proposition \ref{prop:cor1}, as shown in Section \ref{sec:proofofthm}.

Now, as mentioned in the first lines of this paper, Proposition \ref{prop:cor1} follows form the combination of two key ingredients. One is \cite[Proposition 2.1]{Fregoli:Onacounting}, which is the essence of a new partition method developed by Widmer and the author to count lattice points in sets with "hyperbolic spikes", such as $Z$. In this method, we partition the set $Z$ into approximately $\log(T/\varepsilon)^{\mc{M}}$ sets, and we apply different unimodular diagonal linear maps to each set of the partition, to transform it into a ball centred at the origin and having fixed radius. Then, we count the points of the different mappings of the lattice $\Lambda_{\bs{L}}$ lying within this ball. To do so, we rely on the second key ingredient to our proof, i.e., Proposition \ref{prop:firstminima}, where the successive minima of the different mappings of the lattice $\Lambda_{\bs{L}}$ are estimated. This proposition replaces \cite[Proposition 2.5]{Fregoli:Onacounting}, where we considered only the first successive minimum of the transformed lattices. To overcome the fact that the lattice $\Lambda_{\bs{L}}$ is no longer weakly admissible, we need to use a more sophisticated approach, taking into account the higher successive minima of the different mappings of $\Lambda_{\bs{L}}$. Prior to giving details, we roughly explain how this approach works.

Assume that we have partitioned the set $Z$ into smaller sets $Z_{k}$ and let $\varphi_{k}$ be a unique unimodular diagonal linear map associated with the region $Z_{k}$ that transforms it into a ball of given radius. In order to obtain the estimate in (\ref{eq:cor1}), we bound above the quantity $\#(\varphi_{k}(\Lambda_{\bs{L}})\cap\varphi_{k}(Z_{k}))$ for each $k$ and then we sum these quantities for $Z_{k}$ ranging in the given partition of the set $Z$. By standard considerations in the Geometry of Numbers, the quantity $\#(\varphi_{k}(\Lambda_{\bs{L}})\cap\varphi_{k}(Z_{k}))$ can be bounded above by summing the ratios
\begin{equation}
\label{eq:prodexample}
\frac{\mathscr{V}_{s}(\varepsilon,Q)}{\lambda_{1}\dotsm\lambda_{s}}
\end{equation} 
for $s=1,\dotsc,\mc{M}+\mc{N}$, where $\mathscr{V}_{s}(\varepsilon,Q)$ is an upper bound for the volume of the region $\varphi_{k}(Z_{k})$, and $\lambda_{1},\dotsc,\lambda_{s}$ are the first $s$ successive minima of the lattice $\varphi_{k}(\Lambda_{\bs{L}})$. In \cite{Fregoli:Onacounting}, we rely on the inequality $\lambda_{1}\dotsm\lambda_{s}\geq\lambda_{1}^{s}$, and we estimate only the first successive minimum $\lambda_{1}$ of $\varphi_{k}(\Lambda_{\bs{L}})$, thus finding an upper bound for the quantity $\#(\varphi_{k}(\Lambda_{\bs{L}})\cap\varphi_{k}(Z_{k}))$. In the case of distorted boxes, this approach typically fails. Indeed, suppose that $\bs{v}^{*}=(\bs{x}^{*},\bs{y}^{*})$ is a vector in the lattice $\Lambda_{\bs{L}}$ such that $|\varphi_{k}\left(\bs{v}^{*}\right)|_{2}=\lambda_{1}$. Then, $\bs{v}^{*}\in\mb{R}^{\mc{M}+\mc{N}}$ has the form
$$\bs{v}^{*}:=(\underbrace{L_{1}\bs{q}^{*}+p^{*}_{1},\dotsc,L_{\mc{M}}\bs{q}^{*}+p^{*}_{\mc{M}}}_{=\bs{x}^{*}},\underbrace{\bs{q}^{*}}_{=\bs{y}^{*}})$$
for some $\bs{p}^{*}\in\mb{Z}^{\mc{M}}$ and $\bs{q}^{*}\in\mb{Z}^{\mc{N}}\setminus\{\bs{0}\}$ (we can assume $\bs{q}^{*}\neq\bs{0}$, since otherwise the proof is trivial). Assume that for all $\bs{v}\in\mb{R}^{\mc{M}+\mc{N}}$
$$\varphi_{k}(\bs{v}):=(\mu_{1}x_{1},\dotsc,\mu_{\mc{M}}x_{\mc{M}},\nu_{1}y_{1},\dotsc,\nu_{\mc{N}}y_{\mc{N}}),$$
where the constants $\mu_{i},\nu_{j}>0$ are such that $\mu_{1}\dotsc\mu_{\mc{M}}\nu_{1}\dotsb\nu_{\mc{N}}=1$. Then, we have that

\begin{equation}
\lambda_{1}=\left|\varphi_{k}\left(\bs{v}^{*}\right)\right|_{2}=\left(\sum_{i=1}^{\mc{M}}|\mu_{i}x^{*}_{i}|^{2}+\sum_{j=1}^{\mc{N}}|\nu_{j}y^{*}_{j}|^{2}\right)^{1/2}.\nonumber
\end{equation}

Since we know that the product $\mu_{1}\dotsc\mu_{\mc{M}}\nu_{1}\dotsb\nu_{\mc{N}}$ of the rescaling factors in the map $\varphi_{k}$ is $1$, we can use an arithmetic-geometric mean inequality to bound below the value of $\lambda_{1}$, as we do in \cite{Fregoli:Onacounting}. This inequality, however, heavily depends on the second Cartesian factor of the set $Z$ and, more precisely, on the relative size of the parameters $Q_{1},\dotsc,Q_{\mc{N}}$. Let us be more precise. As in \cite{Fregoli:Onacounting}, the sets $Z_{k}$ are Cartesian products. In particular, they take the shape  
$$Z_{k}:=H_{k}\times\prod_{j=1}^{\mc{N}}[-Q_{j},Q_{j}],$$
where $H_{k}$ derives from a partition of the set $H$. 
If $Q_{1}=\dotsb =Q_{\mc{N}}$, then, to transform the set $Z_{k}$ into a ball, it suffices to define $\nu_{1}=\dotsb =\nu_{\mc{N}}=:\nu$. We can then write
\begin{multline}
\left|\varphi_{k}\left(\bs{v}^{*}\right)\right|_{2}=\left(\sum_{i=1}^{\mc{M}}|\mu_{i}x^{*}_{i}|^{2}+\sum_{j=1}^{\mc{N}}|\nu y^{*}_{j}|^{2}\right)^{1/2}\geq\left(\prod_{i=1}^{\mc{M}}|\mu_{i}x^{*}_{i}|\cdot\nu^{\mc{N}}|\bs{y}^{*}|_{2}^{\mc{N}}\right)^{\frac{1}{\mc{M}+\mc{N}}} \\
=\left(\prod_{i=1}^{\mc{M}}|x^{*}_{i}|\cdot|\bs{y}^{*}|_{2}^{\mc{N}}\right)^{\frac{1}{\mc{M}+\mc{N}}}=\left(\prod_{i=1}^{\mc{M}}|L_{i}\bs{q}^{*}+p^{*}_{i}|\cdot|\bs{q}^{*}|_{2}^{\mc{N}}\right)^{\frac{1}{\mc{M}+\mc{N}}}\geq \left(\prod_{i=1}^{\mc{M}}\|L_{i}\bs{q}^{*}\|\cdot|\bs{q}^{*}|_{2}^{\mc{N}}\right)^{\frac{1}{\mc{M}+\mc{N}}}.\nonumber
\end{multline}
Crucially, the last term of this inequality does not depend on the specific set $Z_{k}$ in the partition and can be bounded below by values of the function $\phi$, thus deducing the required bound. On the other hand, when the variables $Q_{1},\dotsc,Q_{\mc{N}}$ take arbitrary values, we can no longer assume that $\nu_{1}=\dotsb =\nu_{\mc{N}}$. Let $1\leq h^{*}\leq\mc{N}$ be the largest non-null index such that $q^{*}_{h^{*}}\neq 0$. Then, we can write

\begin{equation}
\label{eq:productexplanation}
\left|\varphi_{k}\left(\bs{v}^{*}\right)\right|_{2}=\left(\sum_{i=1}^{\mc{M}}|\mu_{i}x^{*}_{i}|^{2}+\sum_{j=1}^{\mc{N}}|\nu_{j}y^{*}_{j}|^{2}\right)^{1/2}
\geq\left(\prod_{i=1}^{\mc{M}}|\mu_{i}x^{*}_{i}|\prod_{j=1}^{h^{*}}|\nu_{j}y^{*}_{j}|\right)^{1/(\mc{M}+h^{*})},
\end{equation}

but we have no control over the number $h^{*}$. In particular, if $h^{*}<\mc{N}$, we cannot fully exploit the fact that the product of the rescaling factors $\mu_{1}\dotsm\mu_{\mc{M}}\nu_{1}\dotsm\nu_{\mc{N}}$ in the map $\varphi_{k}$ is equal to $1$. This yields an extra factor of $(\nu_{h^{*}+1}\dotsc\nu_{\mc{N}})^{-1}$ on the right-hand side of (\ref{eq:productexplanation}), which increases the exponents of the variables $Q_{h^{*}+1},\dotsc,Q_{\mc{N}}$ in the overall lower bound for the minimum $\lambda_{1}$. Hence, our estimate becomes dependent on the value of $Q_{\max}$, i.e., the largest component of the vector $\bs{Q}$, rather than the geometric mean of all its components. This dependence is obviously insignificant in the case when $Q_{1}=\dotsb =Q_{\mc{N}}$, but it becomes non-negligible for highly distorted boxes, given that the function $Q_{\max}:=\max\{Q_{1},\dotsc,Q_{\mc{N}}\}$ can be much larger than the function $Q:=(Q_{1}\dotsm Q_{\mc{N}})^{1/\mc{N}}$.

To solve the issue presented above, we consider all the successive minima of the lattice $\varphi_{k}(\Lambda_{\bs{L}})$ simultaneously. By a theorem of Mahler and Weyl, there is a basis $\{\bs{v}^{1}\dotsc,\bs{v}^{\mc{M}+\mc{N}}\}$ of the lattice $\varphi_{k}(\Lambda_{\bs{L}})$ such that the vector $\bs{v}^{s}=(L_{1}\bs{q}^{s}+p^{s}_{1},\dotsc,L_{\mc{M}}\bs{q}+p^{s}_{\mc{M}},\bs{q}^{s})$ is (up to a constant) of length $\lambda_{s}$ for $s=1,\dotsc,\mc{M}+\mc{N}$. Since the vectors $\bs{v}^{1},\dotsc,\bs{v}^{\mc{M}+\mc{N}}$ are linearly independent, there cannot be an index $j$ with $1\leq j\leq\mc{N}$ such that $q^{s}_{j}=0$ for all $s$. Hence, we can balance out the presence of the constant $\nu_{j}$ in the products $\lambda_{1}\dotsm\lambda_{s}\sim|\bs{v}^{1}|_{2}\dotsm|\bs{v}^{s}|_{2}$ for $s=1,\dotsc,\mc{M}+\mc{N}$, and obtain a sufficiently good estimate of the terms in (\ref{eq:prodexample}). By doing so, we recover the factor $Q=(Q_{1}\dotsm Q_{\mc{N}})^{1/\mc{N}}$ in our upper bound. This constitutes the main novelty in our approach.

\section{Proof of Proposition \ref{prop:cor1}}

This proof has a similar structure to the proof Theorem 1.5 in \cite{Fregoli:Onacounting}. Nonetheless, we recall most details for the sake of completeness. The crucial difference between the two proofs is represented by Proposition \ref{prop:firstminima}.

We start off by partitioning the set $Z$. Let
$$H_{+}:=H\cap\left\{\bs{x}\in\mb{R}^{\mc{M}}:x_{i}\neq 0,\ i=1,\dotsc,\mc{M}\right\}$$
and let $Z_{+}:=H_{+}\times\prod_{j=1}^{\mc{N}}[-Q_{j},Q_{j}]$. Let also
$$H^{i}:=H\cap\{x_{i}=\bs{0}\}$$
and $Z^{i}:=H^{i}\times\prod_{j=1}^{\mc{N}}[-Q_{j},Q_{j}]$ for $i=1,\dotsc,\mc{M}$. Then, we have
\begin{equation}
Z=Z_{+}\cup\bigcup_{i=1}^{\mc{M}}Z^{i}.\nonumber
\end{equation}
It follows that
\begin{equation}
\#(\Lambda_{\bs{L}}\cap Z)\leq\#(\Lambda_{\bs{L}}\cap Z_{+})+\sum_{i=1}^{\mc{M}}\#\left(\Lambda_{\bs{L}}\cap Z^{i}\right).\nonumber
\end{equation}

Now, we apply \cite[Proposition 2.1]{Fregoli:Onacounting} with $\bs{m}=\bs{\beta}:=\overbrace{(1,\dotsc,1)}^{\mc{M}\ \mbox{times}}$ and parameters $T$ and $\varepsilon^{1/\mc{M}}$, to conveniently partition the set $H_{+}$. For the sake of precision, we restate this proposition below. Note that in Proposition \ref{thm:partition} the symbol $a_{i}^{k}$ stands for a sequence indexed by $i$ and $k$ and not for the number $a_{i}$ raised to the $k$-th power.

\begin{prop}
\label{thm:partition}
Suppose that $T^{\mc{M}}/\varepsilon>e^{\mc{M}}$, where $e=2.71828\dots$ is the base of the natural logarithm. Then, there exists a partition $H_{+}=\bigcup_{k\in\mc{K}}\! X_{k}$ of the set $H_{+}$ and there exists a collection of linear maps $\left\{\varphi_{k}\right\}_{k\in{\mc{K}}}$ defined over $\mb{R}^{\mc{M}}$
such that\vspace{2mm}
\begin{itemize}
\item[$i)$] $\#\mc{K}\ll_{\mc{M}}\log\left(T/\varepsilon^{1/\mc{M}}\right)^{\mc{M}-1}$;\vspace{2mm}
\item[$ii)$] the maps $\varphi_{k}$ for $k\in\mc{K}$ are determined by the expression $\varphi_{k}(\bs{x})_{i}:=\exp\left(a^{k}_{i}-c\right)x_{i}$ for
$i=1,\dotsc,\mc{M}$, where $c\in\mb{R}$ is a constant only depending on $\mc{M}$, and the coefficients $a^{k}_{i}\in\mb{R}$ satisfy\vspace{2mm}
\begin{itemize}
\item[$iia)$] $\exp\left(a^{k}_{i}-c\right)\gg_{\mc{M}}\varepsilon^{1/\mc{M}}/T$ for $i=1,\dotsc,\mc{M}$;\vspace{2mm}
\item[$iib)$] $\sum_{i=1}^{\mc{M}}a^{k}_{i}=0$;
\end{itemize}
\vspace{2mm}
\item[$iii)$] $\varphi_{k}\left(X_{k}\right)\subset\left[-\varepsilon^{1/\mc{M}},\varepsilon^{1/\mc{M}}\right]^{\mc{M}}$ for all $k\in\mc{K}$.\vspace{2mm}
\end{itemize}
\end{prop}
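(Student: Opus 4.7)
I pass to logarithmic coordinates. For $\bs{x} \in H_+$ set $\tau_i(\bs{x}) := \log(T/|x_i|) \in [0, +\infty)$; up to the $2^{\mc{M}}$ sign configurations (absorbed into the implicit constant in property $i)$), $H_+$ is in bijection with the region $\mc{R} := \{\bs{\tau} \in [0,+\infty)^{\mc{M}} : \tau_1 + \cdots + \tau_{\mc{M}} > L\}$, where $L := \log(T^{\mc{M}}/\varepsilon) > \mc{M}$ by hypothesis. I fix the constant $c := \mc{M} - 1$ and look for maps of the form $\varphi_k(\bs{x})_i := \exp(\sigma_i^k - L/\mc{M} - (\mc{M}-1))x_i$ with $\sigma_i^k \geq 0$ and $\sum_i \sigma_i^k = L$. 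With these constraints, property $iib)$ is automatic (since $a_i^k = \sigma_i^k - L/\mc{M}$ sums to $0$), property $iia)$ holds with implicit constant $e^{-(\mc{M}-1)}$ (since $\sigma_i^k \geq 0$), and property $iii)$ reduces to the coordinatewise inequality $\sigma_i^k \leq \tau_i + (\mc{M} - 1)$ for every $\bs{x} \in X_k$.

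\textbf{Index set.} I discretize the simplex slice $\Sigma := \{\bs{\sigma} \in [0,+\infty)^{\mc{M}} : \sum_i \sigma_i = L\}$ by the lattice $\mc{K} := \{(\sigma_1, \ldots, \sigma_{\mc{M}-1}, L - \sum_{i<\mc{M}} \sigma_i) : \sigma_i \in \mb{Z}_{\geq 0}, \sum_{i<\mc{M}} \sigma_i \leq \lfloor L \rfloor\}$, whose cardinality is $\binom{\lfloor L \rfloor + \mc{M} - 1}{\mc{M}-1} \ll_{\mc{M}} L^{\mc{M}-1} \ll_{\mc{M}} \log(T/\varepsilon^{1/\mc{M}})^{\mc{M}-1}$. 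This gives property $i)$.

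\textbf{Partition and verification.} I assign each $\bs{\tau} \in \mc{R}$ to a single $\bs{\sigma}^k \in \mc{K}$ by the greedy rule $\sigma_i^k := \min(\lfloor \tau_i \rfloor, \lfloor L \rfloor - \sum_{j<i} \sigma_j^k)$ for $i = 1, \ldots, \mc{M}-1$ and $\sigma_{\mc{M}}^k := L - \sum_{i<\mc{M}} \sigma_i^k$; then $X_k$ is the preimage of $\bs{\sigma}^k$ under this assignment. For $i < \mc{M}$ one has $\sigma_i^k \leq \lfloor \tau_i \rfloor \leq \tau_i$ immediately. For $i = \mc{M}$ I split into cases: if the greedy cap is never hit, then $\sum_{i<\mc{M}} \sigma_i^k = \sum_{i<\mc{M}} \lfloor \tau_i \rfloor > \sum_{i<\mc{M}} \tau_i - (\mc{M}-1)$, which combined with $\sum_j \tau_j > L$ yields $\sigma_{\mc{M}}^k < \tau_{\mc{M}} + (\mc{M}-1)$; in the capped case $\sum_{i<\mc{M}} \sigma_i^k = \lfloor L \rfloor$, so $\sigma_{\mc{M}}^k = \{L\} < 1 \leq \tau_{\mc{M}} + (\mc{M}-1)$. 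Either way $\sigma_i^k \leq \tau_i + (\mc{M}-1)$, establishing $iii)$.

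\textbf{Main obstacle.} The most delicate part will be showing $\sigma_{\mc{M}}^k \leq \tau_{\mc{M}} + (\mc{M}-1)$ uniformly for $\bs{\tau}$ lying arbitrarily close to the boundary hyperplane $\sum \tau_i = L$: without the $(\mc{M}-1)$ slack, the rounding errors accumulated in the first $\mc{M}-1$ coordinates can push the residual $\sigma_{\mc{M}}^k$ above $\tau_{\mc{M}}$, and any attempt to make $\sum \sigma_i^k$ an integer (via different choices of $c$) only shifts the problem to a different boundary configuration. The role of the constant $c$ in the statement is precisely to furnish this slack, and the fact that $c$ depends only on $\mc{M}$ is what ensures the lattice-point count and the scaling bounds remain uniform in $T$ and $\varepsilon$.
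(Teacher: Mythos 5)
Your proof is correct. Note first that the paper itself does not prove this proposition: it is imported verbatim from Proposition 2.1 of \cite{Fregoli:Onacounting} (the Widmer--Fregoli partition method), so there is no in-text argument to compare against; your write-up is therefore judged as a self-contained proof, and it holds up. Passing to the logarithmic coordinates $\tau_i=\log(T/|x_i|)$, so that $H_+$ becomes the region $\sum_i\tau_i>L$ with $L=\log(T^{\mc{M}}/\varepsilon)$, and then partitioning by integer rounding on the simplex slice $\sum_i\sigma_i=L$, is exactly the mechanism such partition lemmas rest on. Your verifications are sound: $iib)$ is built in, $iia)$ follows from $\sigma_i^k\geq 0$ with constant $e^{-(\mc{M}-1)}$, the count $\binom{\lfloor L\rfloor+\mc{M}-1}{\mc{M}-1}\ll_{\mc{M}}L^{\mc{M}-1}$ gives $i)$, and the case split (greedy cap hit or not) correctly establishes $\sigma_i^k\leq\tau_i+(\mc{M}-1)$, which is equivalent to $iii)$ since $Te^{-L/\mc{M}}=\varepsilon^{1/\mc{M}}$. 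Two cosmetic remarks: the final ``Main obstacle'' paragraph is redundant, as the bound it worries about is already proved in the preceding paragraph; and the $2^{\mc{M}}$ sign configurations need not be absorbed anywhere, since the assignment $\bs{x}\mapsto\bs{\sigma}^k$ factors through $(|x_1|,\dotsc,|x_{\mc{M}}|)$ and condition $iii)$ only involves moduli.
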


We set $\hat{X}_{k}:=X_{k}\times\prod_{j=1}^{\mc{N}}[-Q_{j},Q_{j}]$ for $k\in\mc{K}$, and we extend the maps $\varphi_{k}$ to $\hat{\varphi}_{k}:\mb{R}^{\mc{M}+\mc{N}}\to\mb{R}^{\mc{M}+\mc{N}}$ ($k\in\mc{K}$), by defining $\hat{\varphi}_{k}$ as the identity map on the second $\mc{N}$ coordinates. In view of this, we find a partition
$$Z_{+}=\bigcup_{k\in\mc{K}}\hat{X}_{k}$$
of the set $Z_{+}$. Hence, we have
\begin{align}
 \#\left(\Lambda_{\bs{L}}\cap Z\right) & \leq\#(\Lambda_{\bs{L}}\cap Z_{+})+\sum_{i=1}^{\mc{M}}\#\left(\Lambda_{\bs{L}}\cap Z^{i}\right)\nonumber\\
 & =\sum_{k\in \mc{K}}\#\left(\Lambda_{\bs{L}}\cap \hat{X}_{k}\right)+\sum_{i=1}^{\mc{M}}\#\left(\Lambda_{\bs{L}}\cap Z^{i}\right)\nonumber \\
 & =\sum_{k\in\mc{K}}\#\left(\hat{\varphi}_{k}(\Lambda_{\bs{L}})\cap \hat{\varphi}_{k}\left(\hat{X}_{k}\right)\right)+\sum_{i=1}^{\mc{M}}\#\left(\Lambda_{\bs{L}}\cap Z^{i}\right)\nonumber.
\end{align}
\noindent We deal with these terms separately. We start with $\#\left(\Lambda_{\bs{L}}\cap Z^{i}\right)$ for $i=1,\dotsc,\mc{M}$.

\begin{lem}
\label{lem:LambdacapC}
For $i=1,\dotsc,\mc{M}$ we have
\begin{equation}
\#\left(\Lambda_{\bs{L}}\cap Z^{i}\right)\ll_{\mc{M}}(1+T)^{\mc{M}}.\nonumber
\end{equation}
\end{lem}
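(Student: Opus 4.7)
The plan is to parametrise the lattice points in $\Lambda_{\bs{L}} \cap Z^i$ explicitly and then collapse almost all degrees of freedom using the standing rational linear-independence hypothesis on the rows of $\bs{L}$.

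First, recall that every point of $\Lambda_{\bs{L}} = \bs{A}_{\bs{L}} \mb{Z}^{\mc{M}+\mc{N}}$ has the form $\bs{v} = (L_{1}\bs{q}+p_{1}, \dotsc, L_{\mc{M}}\bs{q}+p_{\mc{M}}, \bs{q})$ for a unique pair $(\bs{p},\bs{q}) \in \mb{Z}^{\mc{M}} \times \mb{Z}^{\mc{N}}$. If $\bs{v} \in Z^{i}$, then the definition of $Z^{i}$ forces the $i$-th coordinate to vanish, i.e.,
\begin{equation*}
L_{i}\bs{q} + p_{i} = \sum_{j=1}^{\mc{N}} L_{ij} q_{j} + p_{i} = 0.
\end{equation*}
This is a $\mb{Q}$-linear relation among $L_{i1}, \dotsc, L_{i\mc{N}}, 1$ with integer coefficients $q_{1}, \dotsc, q_{\mc{N}}, p_{i}$. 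By the technical assumption stated in the introduction, these real numbers are linearly independent over $\mb{Q}$, so I conclude that $q_{1} = \dotsb = q_{\mc{N}} = 0$ and $p_{i} = 0$.

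With $\bs{q} = \bs{0}$ and $p_{i} = 0$ forced, the point $\bs{v}$ reduces to $(p_{1}, \dotsc, p_{i-1}, 0, p_{i+1}, \dotsc, p_{\mc{M}}, \bs{0})$, and the only remaining constraints inherited from $\bs{v} \in Z^{i} \subset Z$ are the one-dimensional bounds $|p_{j}| \leq T$ for $j \neq i$ (the product condition defining $H$ is automatic once $x_{i} = 0$, and the $\bs{y}$-constraints are satisfied trivially since $\bs{q} = \bs{0}$). The number of integers in $[-T, T]$ is at most $2T + 1 \leq 2(1+T)$, so the total count is at most
\begin{equation*}
(2T+1)^{\mc{M}-1} \ll_{\mc{M}} (1+T)^{\mc{M}-1} \leq (1+T)^{\mc{M}},
\end{equation*}
which is the required estimate.

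There is no real obstacle here: the lemma is essentially a direct consequence of the rational linear-independence hypothesis, which ensures that the hyperplane $\{x_{i} = 0\}$ intersects $\Lambda_{\bs{L}}$ only in the sublattice $\mb{Z}^{\mc{M}-1} \times \{\bs{0}\} \subset \mb{Z}^{\mc{M}} \times \{\bs{0}\}$, and then the box constraints give the trivial integer-point count. The bound is actually slightly better than stated, but writing $(1+T)^{\mc{M}}$ keeps it uniform with the estimate applied to $\#(\hat{\varphi}_{k}(\Lambda_{\bs{L}}) \cap \hat{\varphi}_{k}(\hat{X}_{k}))$ later in the proof.
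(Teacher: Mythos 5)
Your argument is correct and follows essentially the same route as the paper: the rational linear-independence hypothesis forces $\bs{q}=\bs{0}$, reducing the count to integer points of $\mb{Z}^{\mc{M}}\times\{\bs{0}\}$ in a box of side $2T$. The only (harmless) difference is that you also note $p_{i}=0$ is forced, yielding the marginally sharper exponent $\mc{M}-1$, whereas the paper simply bounds by $\#\left([-T,T]^{\mc{M}}\cap\mb{Z}^{\mc{M}}\right)\ll_{\mc{M}}(1+T)^{\mc{M}}$.
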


\begin{proof}
Since the entries $L_{i,1},\dotsc,L_{i\mc{N}}$ along with $1$ are linearly independent over $\mb{Z}$, the equation $L_{i}\bs{q}+p_{i}=0$ implies that $\bs{q}=\bs{0}$. It follows that $\Lambda_{\bs{L}}\cap Z^{i}\subset \Lambda_{\bs{L}}\cap C$ for $i=1,\dotsc,\mc{M}$. Hence, we have that $\#(\Lambda_{\bs{L}}\cap Z^{i})\leq\#((\Lambda_{\bs{L}}\cap C)\cap(Z\cap C))$. Now, we observe that $\Lambda_{\bs{L}}\cap C=\mb{Z}^{\mc{M}}\times\{\bs{0}\}$ and $Z\cap C\subset[-T,T]^{\mc{M}}\times\{\bs{0}\}$. This immediately yields the required inequality.
\end{proof}

We are left to estimate the quantity $\#\left(\hat{\varphi}_{k}(\Lambda_{\bs{L}})\cap \hat{\varphi}_{k}\left(\hat{X}_{k}\right)\right)$ for $k\in\mc{K}$. To do so, we use a powerful result from \cite{Widmer:CountingLattice} which generalises a theorem of Davenport (see \cite[Equation (1.2) and discussion therein]{Widmer:CountingLattice}).
\begin{theorem}
\label{thm:latticeest}
Let $n\in\mb{N}$ and let $\Lambda$ be a full rank lattice in $\mb{R}^{n}$. Let also $\bs{P}\in[0,+\infty)^{n}$ and $B_{\bs{P}}:=\prod_{i=1}^{n}[-P_{i},P_{i}]\subset\mb{R}^{n}$. Then, we have that
$$\#(\Lambda\cap B_{\bs{P}})\ll_{n}1+\sum_{s=1}^{n}\frac{\mathscr{V}_{s}(B_{\bs{P}})}{\lambda_{1}\dotsm\lambda_{s}},$$
where $\mathscr{V}_{s}(B_{\bs{P}}):=\max\{P_{i_{1}}\dotsm P_{i_{s}}:1\leq i_{1}<\dotsb < i_{s}\leq n\}$, and $\lambda_{s}$ is the $s$-th successive minimum of the lattice $\Lambda$.
\end{theorem}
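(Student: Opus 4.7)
The plan is to combine successive-minima estimates with respect to two different norms on $\mb{R}^{n}$: the Euclidean norm (which gives the $\lambda_{s}$) and the box norm induced by $B_{\bs{P}}$. Accordingly, I would introduce the auxiliary sequence
\begin{equation*}
\mu_{s} := \inf\{r>0 : rB_{\bs{P}}\text{ contains }s\text{ linearly independent vectors of }\Lambda\}, \qquad s = 1, \ldots, n,
\end{equation*}
and, by Mahler's lemma applied to $B_{\bs{P}}$, choose a basis $\bs{c}_{1}, \ldots, \bs{c}_{n}$ of $\Lambda$ with $\bs{c}_{s} \in c(n) \mu_{s} B_{\bs{P}}$, i.e., $|(\bs{c}_{s})_{j}| \leq c(n) \mu_{s} P_{j}$ for all $s, j$. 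A standard pigeonhole argument on the coefficients of $\bs{x} = \sum_{s} m_{s} \bs{c}_{s}$ for $\bs{x} \in \Lambda \cap B_{\bs{P}}$ then produces the product-form bound
\begin{equation*}
\#(\Lambda \cap B_{\bs{P}}) \ll_{n} \prod_{s=1}^{n}\left(1 + \frac{1}{\mu_{s}}\right).
\end{equation*}

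Expanding the right-hand side and grouping terms by the size $k$ of the active index set $S \subseteq \{1, \ldots, n\}$ gives, via the monotonicity of the $\mu_{s}$, the inequality $\#(\Lambda \cap B_{\bs{P}}) \ll_{n} 1 + \sum_{k=1}^{n} (\mu_{1} \cdots \mu_{k})^{-1}$. Thus the task reduces to establishing the comparison
\begin{equation*}
\mu_{1} \cdots \mu_{k} \gg_{n} \frac{\lambda_{1} \cdots \lambda_{k}}{\mathscr{V}_{k}(B_{\bs{P}})} \qquad \text{for every } k = 1, \ldots, n.
\end{equation*}
To prove this, I would evaluate the determinant of the rank-$k$ sublattice $\Lambda_{k} := \mb{Z}\bs{c}_{1} + \cdots + \mb{Z}\bs{c}_{k}$ in two ways. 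From above, Cauchy--Binet applied to the $n \times k$ matrix $[\bs{c}_{1}|\cdots|\bs{c}_{k}]$, together with the Leibniz expansion of each $k \times k$ coordinate minor and the coordinate-wise bound $|(\bs{c}_{s})_{j}| \leq c(n)\mu_{s} P_{j}$, yields $\det(\Lambda_{k}) \ll_{n} (\mu_{1} \cdots \mu_{k}) \mathscr{V}_{k}(B_{\bs{P}})$. From below, Minkowski's second theorem applied inside $\Lambda_{k}$, combined with the monotonicity $\lambda_{s}(\Lambda_{k}) \geq \lambda_{s}(\Lambda)$ for $s \leq k$, gives $\det(\Lambda_{k}) \gg_{n} \lambda_{1} \cdots \lambda_{k}$. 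Comparing the two estimates completes the argument.

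The main obstacle I anticipate is the Cauchy--Binet step above. A naive bound such as $\mu_{s} \geq \lambda_{s}/\diam(B_{\bs{P}})$ would substitute $|\bs{P}|_{2}^{k}$ for $\mathscr{V}_{k}(B_{\bs{P}})$, losing all coordinate-wise information about the widths $P_{j}$; the point of expanding $\det(\Lambda_{k})$ as a sum over $k \times k$ coordinate minors is precisely that each such minor selects $k$ distinct coordinates, so the maximum over minors is exactly $\mathscr{V}_{k}(B_{\bs{P}})$. Getting the combinatorics of this maximisation to match the ordering of the $\mu_{s}$ (and thereby of the $\lambda_{s}$) is the subtle geometric input that powers the entire estimate.
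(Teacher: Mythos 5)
The first thing to say is that the paper does not prove Theorem \ref{thm:latticeest} at all: it is quoted directly from \cite{Widmer:CountingLattice} as a generalisation of Davenport's lemma. Your proposal is therefore, by construction, a different route, namely an attempt at a self-contained proof. Its skeleton is sound, and the part you identify as the crux is indeed correct: Cauchy--Binet applied to the $n\times k$ matrix $[\bs{c}_{1}|\dotsb|\bs{c}_{k}]$ writes $\det(\Lambda_{k})^{2}$ as a sum of squares of $k\times k$ coordinate minors; each minor involves $k$ \emph{distinct} coordinates $j\in J$, so the Leibniz expansion together with $|(\bs{c}_{s})_{j}|\leq c(n)\mu_{s}P_{j}$ gives $|\det(C_{J})|\leq k!\,c(n)^{k}\mu_{1}\dotsm\mu_{k}\prod_{j\in J}P_{j}\leq k!\,c(n)^{k}\mu_{1}\dotsm\mu_{k}\mathscr{V}_{k}(B_{\bs{P}})$, which is exactly where $\mathscr{V}_{k}$ rather than $\diam(B_{\bs{P}})^{k}$ enters. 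The lower bound $\det(\Lambda_{k})\gg_{n}\lambda_{1}\dotsm\lambda_{k}$ via the easy half of Minkowski's second theorem in $\mathrm{span}(\bs{c}_{1},\dotsc,\bs{c}_{k})$, together with $\lambda_{s}(\Lambda_{k})\geq\lambda_{s}(\Lambda)$ for sublattices, is also fine, as is the reduction from $\prod_{s}(1+\mu_{s}^{-1})$ to $1+\sum_{k}(\mu_{1}\dotsm\mu_{k})^{-1}$ using $\mu_{1}\leq\dotsb\leq\mu_{n}$. (The degenerate case $P_{i}=0$, which the statement allows, should be dispatched by replacing $P_{i}$ with $P_{i}+\delta$ and letting $\delta\to0$, since otherwise the gauge of $B_{\bs{P}}$ and the $\mu_{s}$ are not defined.)

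The one genuine soft spot is the step you dismiss as ``a standard pigeonhole argument on the coefficients''. The bound $\#(\Lambda\cap B_{\bs{P}})\ll_{n}\prod_{s}(1+\mu_{s}^{-1})$ is true, but it is itself a nontrivial classical theorem (of Betke--Henk--Wills type; Henk proved $\#(\Lambda\cap K)\leq 2^{n-1}\prod_{s}\lfloor 2/\mu_{s}+1\rfloor$), and it does not follow from a coordinate-wise pigeonhole: for a Mahler basis one cannot in general bound the individual coefficients by $|m_{s}|\ll 1/\mu_{s}$, because a large multiple of $\bs{c}_{s}$ can be almost cancelled inside $B_{\bs{P}}$ by a suitable element of $V_{s-1}:=\mathrm{span}(\bs{c}_{1},\dotsc,\bs{c}_{s-1})$. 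What is true, and what the classical proof establishes, is the \emph{conditional} count: given $m_{s+1},\dotsc,m_{n}$, the number of admissible values of $m_{s}$ is $\ll_{n} 1+1/\mu_{s}$. Proving this requires passing to the quotient $V_{s}/V_{s-1}$ and lower-bounding the gauge-distance from $\bs{c}_{s}$ to $V_{s-1}$ in terms of $\mu_{s}$, which is precisely the delicate part of the classical argument. Either carry out that induction or cite the product bound as a black box; with that repair (and the limiting argument for vanishing $P_{i}$) your proof is complete, and it is a legitimate alternative to simply importing the result from \cite{Widmer:CountingLattice} as the paper does.
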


From Proposition \ref{thm:partition} it follows that
\begin{equation}
\label{eq:27}
\hat{\varphi}_{k}\left(\hat{X}_{k}\right)\subset\left[-\varepsilon^{1/\mc{M}},\varepsilon^{1/\mc{M}}\right]^{\mc{M}}\times\prod_{j=1}^{\mc{N}}[-Q_{j},Q_{j}]
\end{equation}
for $k\in\mc{K}$. Before applying Theorem \ref{thm:latticeest}, we rescale the box in (\ref{eq:27}) and transform it into a cube, to gain a better control over the value of $\mathscr{V}_{s}$. We set
$$\theta:=\frac{\left(\varepsilon Q^{\mc{N}}\right)^{\frac{1}{\mc{M}+\mc{N}}}}{\varepsilon^{1/\mc{M}}},$$
and we consider two linear maps $\omega_{1},\omega_{2}:\mb{R}^{\mc{M}+\mc{N}}\to \mb{R}^{\mc{M}+\mc{N}}$, defined by
$$\omega_{1}(\bs{x},\bs{y}):=\left(\bs{x},\frac{Q}{Q_{1}}y_{1},\dotsc,\frac{Q}{Q_{\mc{N}}}y_{\mc{N}}\right)$$
and
$$\omega_{2}\left(\bs{x},\bs{y}\right):=\left(\theta\bs{x},\theta^{-\frac{\mc{M}}{\mc{N}}}\bs{y}\right)$$
for $(\bs{x},\bs{y})\in\mb{R}^{\mc{M}+\mc{N}}$.
By applying these maps to the box in question, we obtain that
\begin{equation}
\omega_{2}\circ\omega_{1}\left(\left[-\varepsilon^{1/\mc{M}},\varepsilon^{1/\mc{M}}\right]^{\mc{M}}\times\prod_{j=1}^{\mc{N}}[-Q_{j},Q_{j}]\right)=\left[-\left(\varepsilon Q^{\mc{N}}\right)^{\frac{1}{\mc{M}+\mc{N}}},\left(\varepsilon Q^{\mc{N}}\right)^{\frac{1}{\mc{M}+\mc{N}}}\right]^{\mc{M}+\mc{N}}.\nonumber
\end{equation}
\noindent Hence, from Lemma \ref{lem:LambdacapC} and Theorem \ref{thm:latticeest} we deduce that
\begin{align}
\label{neweq:estimate3}
 & \#\left(\Lambda_{\bs{L}}\cap Z\right)\leq\sum_{k\in\mc{K}}\#\left(\omega_{2}\circ\omega_{1}\circ\hat{\varphi}_{k}(\Lambda_{\bs{L}})\cap \omega_{2}\circ\omega_{1}\circ\hat{\varphi}_{k}\left(\hat{X}_{k}\right)\right)+\sum_{i=1}^{\mc{M}}\#\left(\Lambda_{\bs{L}}\cap Z^{i}\right)\nonumber\\
 & \ll_{\mc{M},\mc{N}}\sum_{k\in\mc{K}}\left(1+\sum_{s=1}^{\mc{M}+\mc{N}}\frac{\left(\varepsilon Q^{\mc{N}}\right)^{\frac{s}{\mc{M}+\mc{N}}}}{\lambda_{1}\left(\omega_{2}\circ\omega_{1}\circ\hat{\varphi}_{k}(\Lambda_{\bs{L}})\right)\dotsm\lambda_{s}\left(\omega_{2}\circ\omega_{1}\circ\hat{\varphi}_{k}(\Lambda_{\bs{L}})\right)}\right)+(1+T)^{\mc{M}},
\end{align}
where $\lambda_{s}\left(\omega_{2}\circ\omega_{1}\circ\hat{\varphi}_{k}(\Lambda_{\bs{L}})\right)$ is the $s$-th successive minimum of the lattice $\omega_{2}\circ\omega_{1}\circ\hat{\varphi}_{k}(\Lambda_{\bs{L}})$ ($k\in\mc{K}$) for $s=1,\dotsc,\mc{M}+\mc{N}$.
We are therefore left to estimate the quantities
$$\frac{\left(\varepsilon Q^{\mc{N}}\right)^{\frac{s}{\mc{M}+\mc{N}}}}{\lambda_{1}\left(\omega_{2}\circ\omega_{1}\circ\hat{\varphi}_{k}(\Lambda_{\bs{L}})\right)\dotsm\lambda_{s}\left(\omega_{2}\circ\omega_{1}\circ\hat{\varphi}_{k}(\Lambda_{\bs{L}})\right)}$$
for $k\in\mc{K}$. We do this in the following proposition, which represents the crucial novelty of this paper in comparison with \cite{Fregoli:Onacounting}.

\begin{prop}
\label{prop:firstminima}
Let $k\in\mc{K}$ and let $\lambda_{1},\dotsc,\lambda_{\mc{M}+\mc{N}}$ be the successive minima of the lattice
$\omega_{2}\circ\omega_{1}\circ\hat{\varphi}_{k}(\Lambda_{\bs{L}})$. Then,
\begin{align}\
\label{eq:firstminima} 
 & \frac{\left(\varepsilon Q^{\mc{N}}\right)^{\frac{s}{\mc{M}+\mc{N}}}}{\lambda_{1}\dotsm\lambda_{s}}\ll_{\mc{M},\mc{N}}1+T^{\mc{M}+\mc{N}-1}+\varepsilon Q^{\mc{N}}+\left(\frac{\varepsilon Q^{\mc{N}}}{\phi(Q)}\right)^{\frac{\mc{M}+\mc{N}-1}{\mc{M}+\mc{N}}}.
\end{align}
for all $s=1,\dotsc,\mc{M}+\mc{N}$.
\end{prop}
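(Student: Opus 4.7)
By the Mahler--Weyl basis theorem I first choose a basis $\bs{v}^{1},\dotsc,\bs{v}^{\mc{M}+\mc{N}}$ of $\Lambda_{\bs{L}}$ realising (up to $\mc{M},\mc{N}$-dependent constants) the successive minima of $\Lambda':=\omega_{2}\circ\omega_{1}\circ\hat{\varphi}_{k}(\Lambda_{\bs{L}})$, in the sense that $|\omega_{2}\circ\omega_{1}\circ\hat{\varphi}_{k}(\bs{v}^{s})|_{2}\ll_{\mc{M},\mc{N}}\lambda_{s}$. Writing $\bs{v}^{s}=(L_{1}\bs{q}^{s}+p_{1}^{s},\dotsc,L_{\mc{M}}\bs{q}^{s}+p_{\mc{M}}^{s},\bs{q}^{s})$ with $(\bs{p}^{s},\bs{q}^{s})\in\mb{Z}^{\mc{M}}\times\mb{Z}^{\mc{N}}$, unimodularity of $\bs{A}_{\bs{L}}$ forces the integer matrix with columns $(\bs{p}^{s},\bs{q}^{s})$ to be unimodular, so its bottom $\mc{N}$ rows span a rank-$\mc{N}$ subspace. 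This yields the crucial \emph{coverage property}: for every $j\in\{1,\dotsc,\mc{N}\}$ there exists at least one $s$ with $q_{j}^{s}\neq 0$.

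Next, I bound each $|\omega_{2}\circ\omega_{1}\circ\hat{\varphi}_{k}(\bs{v}^{s})|_{2}$ from below. For a ``vertical'' basis vector with $\bs{q}^{s}=\bs{0}$, the bound $\mu_{i}\gg_{\mc{M}}\varepsilon^{1/\mc{M}}/T$ from Proposition \ref{thm:partition}, combined with $|\bs{p}^{s}|\geq 1$, produces a length of order at least $\theta\varepsilon^{1/\mc{M}}/T=(\varepsilon Q^{\mc{N}})^{1/(\mc{M}+\mc{N})}/T$. For a ``diagonal'' basis vector with $\bs{q}^{s}\neq\bs{0}$, the $\mb{Q}$-linear independence of each row of $\bs{L}$ with $1$ forces $x_{i}^{s}:=L_{i}\bs{q}^{s}+p_{i}^{s}\neq 0$ for all $i$; I apply AM--GM to the $\mc{M}+|J^{s}|$ nonzero coordinates, where $J^{s}:=\{j:q_{j}^{s}\neq 0\}$, and substitute the $\phi$-multiplicative estimate $\prod_{i}|x_{i}^{s}|\geq\prod_{i}\|L_{i}\bs{q}^{s}\|\geq\phi(P_{s}^{1/\mc{N}})/P_{s}$ with $P_{s}:=\prod_{j\in J^{s}}|q_{j}^{s}|$, arriving at a per-vector bound proportional to $\bigl(\theta^{\mc{M}(\mc{N}-|J^{s}|)/\mc{N}}\,\phi(P_{s}^{1/\mc{N}})\prod_{j\in J^{s}}(Q/Q_{j})\bigr)^{1/(\mc{M}+|J^{s}|)}$.

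Multiplying these per-vector bounds will give the required lower bound on $\lambda_{1}\dotsm\lambda_{s}$, and here the coverage property is essential: a single per-vector bound captures only the factors $Q/Q_{j}$ with $j\in J^{s}$, and if $J^{s}$ is a proper subset of $\{1,\dotsc,\mc{N}\}$ this would naively force $Q_{\max}$ rather than $Q$ to appear in the estimate. The coverage property allows me to redistribute the $Q/Q_{j}$ factors across the basis so that each $j\in\{1,\dotsc,\mc{N}\}$ is accounted for at least once, and the identity $\prod_{j=1}^{\mc{N}}(Q/Q_{j})=Q^{\mc{N}}/\prod_{j}Q_{j}=1$ then cancels the anisotropy. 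I further separate the cases $P_{s}^{1/\mc{N}}\leq Q$ (where monotonicity of $\phi$ gives $\phi(P_{s}^{1/\mc{N}})\geq\phi(Q)$) and $P_{s}^{1/\mc{N}}>Q$ (where the large $y$-coordinates themselves already force $\lambda_{s}$ to be large without invoking $\phi$), and match the contributions to the four summands of \eqref{eq:firstminima}: the terms $1$ and $\varepsilon Q^{\mc{N}}$ come from $\lambda_{1}\dotsm\lambda_{\mc{M}+\mc{N}}\asymp 1$ via Minkowski's second theorem (since $\det\Lambda'$ is a constant depending only on $\mc{M}$); the term $T^{\mc{M}+\mc{N}-1}$ comes from configurations with many vertical basis vectors (where $\theta\varepsilon^{1/\mc{M}}/T$ is the binding lower bound); and $(\varepsilon Q^{\mc{N}}/\phi(Q))^{(\mc{M}+\mc{N}-1)/(\mc{M}+\mc{N})}$ comes from the $\phi$-bound on diagonal vectors.

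The hardest part will be the combinatorial bookkeeping in this combining stage: the varying exponents $\mc{M}+|J^{s}|$ in the per-vector AM--GM bounds make it delicate to combine the estimates while simultaneously tracking the pattern of vertical versus diagonal basis vectors, the multiplicities $\#\{s:j\in J^{s}\}$, and the relative sizes of $P_{s}^{1/\mc{N}}$ and $Q$, in such a way that each of the four summands in \eqref{eq:firstminima} is produced by precisely the right regime.
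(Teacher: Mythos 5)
Your high-level strategy is the right one and matches the paper's: a Mahler--Weyl basis, AM--GM restricted to the nonzero coordinates of each basis vector, linear independence to guarantee that every index $j$ occurs in some support, and a case split according to whether $|q_{j}^{s}|\leq Q_{j}$. But the proposal stops exactly where the difficulty begins: the step you defer as ``combinatorial bookkeeping'' is the main content of the proof, and your coverage property is too weak to carry it. Two concrete problems. First, the estimate is needed for every partial product $\lambda_{1}\dotsm\lambda_{\underline{s}}$ with $\underline{s}<\mc{M}+\mc{N}$, and coverage of all $j$ by the \emph{full} basis says nothing about which indices are covered by the first $\underline{s}$ vectors. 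The paper must first massage the Mahler--Weyl basis so that the supports are nested (Lemma \ref{lem:monotonicity}) and, after a permutation, triangular (Lemma \ref{lem:monotonicity2}); this is what makes the supports of the first $\underline{s}$ vectors controllable at all. Second, even granting coverage, the cancellation $\prod_{j}(Q/Q_{j})=1$ is available only if each factor $Q/Q_{j}$ enters the product of the per-vector bounds with the \emph{same} exponent; since your AM--GM exponents $1/(\mc{M}+|J^{s}|)$ vary with $s$, this fails for the plain AM--GM you propose. Equalising these exponents is precisely the purpose of the recursive weight system (\ref{eq:fillingup}), whose consistency ($k_{s}\geq\mc{M}+h_{s}$ and $\alpha_{\underline{s}}\leq\underline{s}/(\underline{s}+1)$, Lemma \ref{lem:exponentswelldef}) is itself a nontrivial induction. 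Without it one does not reach the exponent $(\mc{M}+\mc{N}-1)/(\mc{M}+\mc{N})$ but an exponent depending on the support of $\bs{q}^{1}$ --- exactly the failure of the naive bound recorded in Lemma \ref{lem:sleqM}.

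There is also a structural case you have not isolated: when the first $s_{0}$ basis vectors all lie in the coordinate subspace spanned by the first $s_{0}$ coordinates, the missing factors $Q/Q_{j}$ never appear in any of the available vectors, no redistribution is possible, and the weighted AM--GM route breaks down. The paper handles this by a separate determinant argument on the rank-$s_{0}$ sublattice (Lemma \ref{lem:slowcase}), and that is where the summand $\varepsilon Q^{\mc{N}}$ in (\ref{eq:firstminima}) actually comes from; your attribution of the terms $1$ and $\varepsilon Q^{\mc{N}}$ to Minkowski's second theorem on the full lattice only works for $s=\mc{M}+\mc{N}$ and does not cover intermediate $s$. In short, the proposal is a correct plan with the right ingredients, but the key combining step --- the one you yourself flag as hardest --- is missing, and it cannot be completed from the coverage property alone.
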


\noindent Combining (\ref{neweq:estimate3}) and Proposition \ref{prop:firstminima}, we find that
\begin{equation}
\#\left(\Lambda_{\bs{L}}\cap Z\right)\ll_{\mc{M},\mc{N}}\#\mc{K}\left((1+T)^{\mc{M}+\mc{N}-1}+\varepsilon Q^{\mc{N}}+\left(\frac{\varepsilon Q^{\mc{N}}}{\phi(Q)}\right)^{\frac{\mc{M}+\mc{N}-1}{\mc{M}+\mc{N}}}\right).\nonumber
\end{equation}
This, in combination with Equation (\ref{eq:intersection}) and the fact that $\#\left(\Lambda_{\bs{L}}\cap C\right)\leq(1+T)^{\mc{M}}$ (see proof of Lemma \ref{lem:LambdacapC}) implies that
\begin{equation}
\label{eq:almostlasteq2}
\#M(\bs{L},\varepsilon,T,\bs{Q})\ll_{\mc{M},\mc{N}}\#\mc{K}\left((1+T)^{\mc{M}+\mc{N}-1}+\varepsilon Q^{\mc{N}}+\left(\frac{\varepsilon Q^{\mc{N}}}{\phi(Q)}\right)^{\frac{\mc{M}+\mc{N}-1}{\mc{M}+\mc{N}}}\right).
\end{equation}
In view of Proposition \ref{thm:partition}, we also have that $\#\mc{K}\ll_{\mc{M}}\log\left(T/\varepsilon^{1/\mc{M}}\right)^{\mc{M}-1}$. Therefore, if $\varepsilon Q^{\mc{N}}/\phi(Q) \\ \geq 1$, the required estimate is a straightforward consequence of (\ref{eq:almostlasteq2}). We are then left to prove the claim in the simpler case when $\varepsilon Q^{\mc{N}}/\phi(Q)<1$. To do this, we make use of the following lemma.

\begin{lem}
\label{lem:emptycase}
Assume that $\varepsilon Q^{\mc{N}}/\phi(Q)<1$. Then, we have that $\Lambda_{\bs{L}}\cap Z\subset C$.
\end{lem}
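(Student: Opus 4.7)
The plan is a short contradiction argument that simply translates the multiplicative bad-approximability hypothesis into a non-existence statement on the support of the sum. Suppose to the contrary that there exists $(\bs{p},\bs{q})\in\mb{Z}^{\mc{M}}\times(\mb{Z}^{\mc{N}}\setminus\{\bs{0}\})$ such that the corresponding lattice point $(L_{1}\bs{q}+p_{1},\dots,L_{\mc{M}}\bs{q}+p_{\mc{M}},\bs{q})$ lies in $Z$. The first Cartesian factor of $Z$ forces $\prod_{i=1}^{\mc{M}}|L_{i}\bs{q}+p_{i}|<\varepsilon$ directly from the definition of $H$, while the second factor forces $|q_{j}|\leq Q_{j}$ for $j=1,\dots,\mc{N}$.

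Next I would upgrade the first of these inequalities to one about fractional parts. Since $p_{i}\in\mb{Z}$, the trivial bound $\|L_{i}\bs{q}\|\leq|L_{i}\bs{q}+p_{i}|$ yields
\begin{equation*}
\prod_{i=1}^{\mc{M}}\|L_{i}\bs{q}\|<\varepsilon.
\end{equation*}
The assumption $\bs{Q}\in[1,+\infty)^{\mc{N}}$ combined with $|q_{j}|\leq Q_{j}$ gives $\max\{1,|q_{j}|\}\leq Q_{j}$, hence $\prod_{j=1}^{\mc{N}}\max\{1,|q_{j}|\}\leq Q^{\mc{N}}$. Multiplying the two estimates,
\begin{equation*}
\prod_{j=1}^{\mc{N}}\max\{1,|q_{j}|\}\prod_{i=1}^{\mc{M}}\|L_{i}\bs{q}\|<\varepsilon Q^{\mc{N}}.
\end{equation*}

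Finally I would apply the bad-approximability hypothesis. Setting $Q':=\bigl(\prod_{j}\max\{1,|q_{j}|\}\bigr)^{1/\mc{N}}\leq Q$ and using that $\phi$ is non-increasing, (\ref{eq:condition1}) yields
\begin{equation*}
\phi(Q)\leq\phi(Q')\leq\prod_{j=1}^{\mc{N}}\max\{1,|q_{j}|\}\prod_{i=1}^{\mc{M}}\|L_{i}\bs{q}\|<\varepsilon Q^{\mc{N}},
\end{equation*}
so $\varepsilon Q^{\mc{N}}/\phi(Q)>1$, contradicting the standing hypothesis $\varepsilon Q^{\mc{N}}/\phi(Q)<1$. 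Hence every point of $\Lambda_{\bs{L}}\cap Z$ must have $\bs{q}=\bs{0}$, i.e.\ lies in $C$, which is the claim. There is no real obstacle: the only subtlety is the use of $Q_{j}\geq 1$ to compare $\max\{1,|q_{j}|\}$ with $Q_{j}$, and the monotonicity of $\phi$ to pass from $Q'$ to $Q$; both are built into the standing assumptions.
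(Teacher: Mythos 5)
Your proof is correct and is essentially identical to the paper's: both argue by contradiction, pass from $|L_i\bs{q}+p_i|$ to $\|L_i\bs{q}\|$, bound $\prod_j\max\{1,|q_j|\}$ by $Q^{\mc{N}}$ using $Q_j\geq 1$, and invoke the monotonicity of $\phi$ to contradict the multiplicative bad approximability condition. No issues.
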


\begin{proof}
Suppose by contradiction that there exists a vector $\bs{v}\in(\Lambda_{\bs{L}}\cap Z)\setminus C$. Then, we can write
$$\bs{v}=(L_{1}\bs{q}+p_{1},\dotsc,L_{\mc{M}}\bs{q}+p_{\mc{M}},\bs{q})$$
for some $\bs{p}\in\mb{Z}^{\mc{M}}$ and $\bs{q}\in\mb{Z}^{\mc{N}}\setminus\{\bs{0}\}$. However, since $\bs{v}\in Z$, we have that
\begin{multline}
\prod_{j=1}^{\mc{N}}\max\left\{1,|q_{j}|\right\}\prod_{i=1}^{\mc{M}}\left\|L_{i}\bs{q}\right\|\leq \prod_{j=1}^{\mc{N}}\max\left\{1,|q_{j}|\right\}\prod_{i=1}^{\mc{M}}\left|L_{i}\bs{q}+p_{i}\right| \\
\leq Q^{\mc{N}}\varepsilon<\phi(Q)\leq\phi\left(\left(\prod_{j=1}^{\mc{N}}\max\left\{1,|q_{j}|\right\}\right)^{\frac{1}{\mc{N}}}\right),\nonumber
\end{multline}
in contradiction with (\ref{eq:condition1}). This proves the claim.
\end{proof}
By Lemma \ref{lem:emptycase} and (\ref{eq:intersection}), if $\varepsilon Q^{\mc{N}}/\phi(Q)< 1$, we have that $M(\bs{L},\varepsilon,T,\bs{Q})=\emptyset$, and (\ref{eq:cor1}) becomes trivial. Hence, the proof is complete.

\section{Proof of Proposition \ref{prop:firstminima}}

\subsection{Preliminaries}

In this section and in the rest of the paper, we use the phrase "basis of a lattice", e.g., $\Lambda\subset\mb{R}^{n}$, to denote a set of linearly independent vectors in $\mb{R}^{n}$ which generates the lattice $\Lambda$ as an Abelian subgroup of $\mb{R}^{n}$.

The goal of this subsection is to construct a "workable" basis for the lattice $\omega_{2}\circ\omega_{1}\circ\hat{\varphi}_{k}(\Lambda_{\bs{L}})$, in order to prove Proposition \ref{prop:firstminima}. We start off by presenting a few general results on lattices.

\begin{theorem}[Mahler-Weyl]
\label{theorem:Mahler-Weyl}
Let $\Lambda$ be a full-rank lattice in $\mb{R}^{n}$ with successive minima \\ $\lambda_{1},\dotsc,\lambda_{n}$. Then, there exists a basis $\bs{v}^{1},\dotsc,\bs{v}^{n}$ of $\Lambda$ such that
$$\lambda_{s}\leq|\bs{v}^{s}|_{2}\leq\max\left\{1,\frac{s}{2}\right\}\lambda_{s}$$
for $s=1,\dotsc,n$.
\end{theorem}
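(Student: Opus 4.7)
The plan is to construct $\bs{v}^1, \ldots, \bs{v}^n$ inductively so that at each stage $\{\bs{v}^1, \ldots, \bs{v}^s\}$ is a basis of the sublattice $\Lambda_s := \Lambda \cap \operatorname{span}(\bs{w}^1, \ldots, \bs{w}^s)$, where $\bs{w}^1, \ldots, \bs{w}^n$ is a fixed choice of linearly independent lattice vectors realising the successive minima (so $|\bs{w}^s|_2 = \lambda_s$); such vectors exist directly from the definition. Since $\Lambda_n = \Lambda$, the resulting family is automatically a basis of $\Lambda$. For $s = 1$ I would simply set $\bs{v}^1 := \bs{w}^1$, which is necessarily primitive in $\Lambda$ (otherwise one would contradict $|\bs{w}^1|_2 = \lambda_1$). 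For $s \geq 2$, the quotient $\Lambda_s/\Lambda_{s-1}$ is a rank-one lattice, so any lift $\bs{v}^s \in \Lambda_s$ of a generator of this quotient extends $\{\bs{v}^1, \ldots, \bs{v}^{s-1}\}$ to a basis of $\Lambda_s$; such a $\bs{v}^s$ is uniquely determined modulo sign and modulo addition of elements of $\Lambda_{s-1}$, a freedom to be exploited below.

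The lower bound $\lambda_s \leq |\bs{v}^s|_2$ is immediate, since $\bs{v}^1, \ldots, \bs{v}^s$ are $s$ linearly independent lattice vectors of norm at most $|\bs{v}^s|_2$, forcing $\lambda_s \leq |\bs{v}^s|_2$ by definition of the $s$-th minimum. For the upper bound I would orthogonally decompose $\bs{v}^s = \bs{v}^{s,\perp} + \bs{v}^{s,\parallel}$ along the splitting $\mb{R}^n = W_{s-1}^{\perp} \oplus W_{s-1}$, where $W_{s-1} := \operatorname{span}(\bs{v}^1, \ldots, \bs{v}^{s-1})$. The perpendicular part is coset-invariant and is controlled by $\lambda_s$: writing $\bs{w}^s = m\bs{v}^s + \bs{z}$ with $m \in \mb{Z}\setminus\{0\}$ and $\bs{z} \in \Lambda_{s-1}$, projection onto $W_{s-1}^{\perp}$ yields $\bs{w}^{s,\perp} = m\,\bs{v}^{s,\perp}$, whence $|\bs{v}^{s,\perp}|_2 \leq |\bs{w}^s|_2/|m| \leq \lambda_s$. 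The parallel part can then be adjusted by $\Lambda_{s-1}$-translations to lie inside the Voronoi cell (or a reduced fundamental parallelepiped) of $\Lambda_{s-1}$, thus bounding $|\bs{v}^{s,\parallel}|_2$ by the covering radius $\mu(\Lambda_{s-1})$.

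The main obstacle is sharpening the resulting Pythagorean estimate $|\bs{v}^s|_2 \leq \sqrt{\lambda_s^2 + \mu(\Lambda_{s-1})^2}$ to the precise constant $\max\{1, s/2\}$ claimed by the theorem. The key input is that $\lambda_i(\Lambda_{s-1}) = \lambda_i(\Lambda) = \lambda_i$ for $i < s$, which lets the inductive length bounds $|\bs{v}^i|_2 \leq \max\{1, i/2\}\lambda_i$ be converted into a bound on $\mu(\Lambda_{s-1})$ in terms of $\lambda_1, \ldots, \lambda_{s-1}$, each of which is at most $\lambda_s$. The small-index cases $s=1,2$, where $\max\{1, s/2\} = 1$, need to be handled separately, using the facts that a primitive lattice vector of length $\lambda_1$ extends to a lattice basis and that a Gauss-type reduction inside the rank-two sublattice $\Lambda_2$ achieves $|\bs{v}^2|_2 = \lambda_2$. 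For $s \geq 3$ the linear factor $s/2$ leaves enough slack for the coset reduction to close the induction. It is this careful tracking of the constant at each step—rather than the qualitative existence of the basis—that will be the most delicate part of the argument.
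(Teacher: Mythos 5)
The paper itself does not prove this statement; it simply cites Cassels (Chapter V, Lemma 8 together with Chapter VIII, Lemma 1), so the comparison is with the classical argument. Your construction of the basis by lifting generators of $\Lambda_{s}/\Lambda_{s-1}$, and your bound $|\bs{v}^{s,\perp}|_{2}\leq\lambda_{s}$ on the component orthogonal to $W_{s-1}$, are both sound. (A small slip in the lower bound: the vectors $\bs{v}^{1},\dotsc,\bs{v}^{s}$ need not all have norm at most $|\bs{v}^{s}|_{2}$; argue instead with the $s$ linearly independent vectors $\bs{w}^{1},\dotsc,\bs{w}^{s-1},\bs{v}^{s}$.) The genuine gap is precisely the step you defer, and the route you indicate for it does not work: converting the \emph{inductive length bounds} $|\bs{v}^{i}|_{2}\leq\max\{1,i/2\}\lambda_{i}$ into a covering-radius bound by any of the standard inequalities ($\mu\leq\tfrac{1}{2}\sum_{i}|\bs{v}^{i}|_{2}$ or $\mu^{2}\leq\tfrac{1}{4}\sum_{i}|\bs{v}^{i}|_{2}^{2}$) yields $\mu(\Lambda_{s-1})^{2}\leq\tfrac{1}{16}\sum_{i<s}i^{2}\lambda_{i}^{2}$, which in the worst case grows like $s^{3}\lambda_{s}^{2}$, while you need $\lambda_{s}^{2}+\mu(\Lambda_{s-1})^{2}\leq\tfrac{s^{2}}{4}\lambda_{s}^{2}$; the induction already fails to close around $s=14$. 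So as written the plan cannot deliver the stated constant.

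The repair is available inside your own framework: the Gram--Schmidt vectors of the basis you construct are exactly your perpendicular components $\bs{v}^{i,\perp}$, which you have already shown satisfy $|\bs{v}^{i,\perp}|_{2}\leq\lambda_{i}$ (not merely $|\bs{v}^{i}|_{2}\leq\max\{1,i/2\}\lambda_{i}$). The nearest-plane bound $\mu(\Lambda_{s-1})^{2}\leq\tfrac{1}{4}\sum_{i<s}|\bs{v}^{i,\perp}|_{2}^{2}\leq\tfrac{s-1}{4}\lambda_{s}^{2}$ then gives $|\bs{v}^{s}|_{2}^{2}\leq\bigl(1+\tfrac{s-1}{4}\bigr)\lambda_{s}^{2}\leq\tfrac{s^{2}}{4}\lambda_{s}^{2}$ for $s\geq3$, and your separate treatment of $s=1,2$ finishes the proof. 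For comparison, the classical proof cited by the paper avoids covering radii altogether: write $\bs{v}^{s}=\sum_{i\leq s}t_{i}\bs{w}^{i}$, reduce each $t_{i}$ with $i<s$ into $[-1/2,1/2]$ by subtracting integer multiples of $\bs{w}^{i}\in\Lambda_{s-1}$, and observe that $t_{s}=1/c$ for a nonzero integer $c$; if $|c|\geq2$ then $|\bs{v}^{s}|_{2}\leq\tfrac{1}{2}\sum_{i\leq s}\lambda_{i}\leq\tfrac{s}{2}\lambda_{s}$, while if $c=\pm1$ one may simply take $\bs{v}^{s}=\bs{w}^{s}$. Either version works; yours needs the Gram--Schmidt refinement spelled out to be complete.
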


\noindent Theorem \ref{theorem:Mahler-Weyl} is a consequence of \cite[Chapter VIII, Lemma 1]{Cassels:AnIntrotoGeomofNumb} and \cite[Chapter V, Lemma 8]{Cassels:AnIntrotoGeomofNumb}.

Let
$$\mathfrak{I}\left(\bs{v}\right):=\left\{h\in\{1,\dotsc,\mc{M}+\mc{N}\}:v_{h}\neq 0\right\}$$
for $\bs{v}\in\mb{R}^{\mc{M}+\mc{N}}$. From Theorem \ref{theorem:Mahler-Weyl}, we deduce the following.

\begin{lem}
\label{lem:monotonicity}
Let $\Lambda$ be a full rank lattice in $\mb{R}^{n}$ and let $\lambda_{1},\dotsc,\lambda_{n}$ be its successive minima. Then, there exists a basis $\bs{v}^{1},\dotsc,\bs{v}^{n}$ of the lattice $\Lambda$ such that\vspace{2mm}
\begin{itemize}
\item[$i)$] $|\bs{v}^{s}|_{2}\ll_{n}\lambda_{s}$ for $s=1,\dotsc,n$;\vspace{2mm}
\item[$ii)$] $\mathfrak{I}\left(\bs{v}^{s}\right)\subseteq \mathfrak{I}\left(\bs{v}^{s+1}\right)$ for $s=1,\dotsc,n-1$.\vspace{2mm}
\end{itemize}
\end{lem}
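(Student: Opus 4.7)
The plan is to take the basis $\bs{u}^1,\dotsc,\bs{u}^n$ of $\Lambda$ provided by Theorem \ref{theorem:Mahler-Weyl}, which already satisfies $|\bs{u}^s|_2 \leq \max\{1,s/2\}\lambda_s$, and then modify it by a short sequence of elementary unimodular operations in order to enforce the nested-support property (ii), losing only an $n$-dependent constant in the size bound (i).

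The modification proceeds iteratively for $s=1,2,\dotsc,n-1$. At step $s$, assume inductively that the current basis $\bs{v}^1,\dotsc,\bs{v}^n$ already satisfies $\mathfrak{I}(\bs{v}^r)\subseteq\mathfrak{I}(\bs{v}^{r+1})$ for every $r<s$ and $|\bs{v}^r|_2\ll_n\lambda_r$ for every $r$. We replace $\bs{v}^{s+1}$ by $\bs{v}^{s+1}+t_s\bs{v}^s$, where the integer $t_s$ is chosen so that $v^{s+1}_h+t_s v^s_h\neq 0$ for every $h\in\mathfrak{I}(\bs{v}^s)$. Since $v^s_h\neq 0$ for such $h$, each coordinate in $\mathfrak{I}(\bs{v}^s)$ forbids at most one integer value of $t$, giving a forbidden set of size at most $|\mathfrak{I}(\bs{v}^s)|\leq n$, so a valid $t_s\in\{0,1,\dotsc,n\}$ exists by pigeonhole. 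For coordinates $h\notin\mathfrak{I}(\bs{v}^s)$ the replacement is harmless since $v^s_h=0$. With this choice, the support of the new $\bs{v}^{s+1}$ equals $\mathfrak{I}(\bs{v}^s)\cup\mathfrak{I}(\bs{v}^{s+1}_{\mathrm{old}})$, which contains $\mathfrak{I}(\bs{v}^s)$ as required.

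Since step $s$ alters only $\bs{v}^{s+1}$, all inclusions established in earlier steps are preserved, so after the last step $s=n-1$ condition (ii) holds for every $s$. Each replacement is a unimodular change of basis, hence $\bs{v}^1,\dotsc,\bs{v}^n$ still generates $\Lambda$. For (i), the bound $|\bs{v}^{s+1}|_2\leq \max\{1,(s+1)/2\}\lambda_{s+1}+n\cdot|\bs{v}^s|_2$ together with $\lambda_s\leq\lambda_{s+1}$ gives a recursion $A_{s+1}\leq\max\{1,(s+1)/2\}+nA_s$ for the ratios $A_s:=|\bs{v}^s|_2/\lambda_s$, and a straightforward induction starting from $A_1\leq 1$ yields constants $A_s$ depending only on $n$.

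There is no genuine obstacle in this argument; the only nontrivial ingredient is the avoidance step, namely the observation that among $n+1$ candidate integer values of $t$ one can always find one which introduces no spurious cancellation at any of the at most $n$ coordinates where $\bs{v}^s$ is nonzero. Everything else is bookkeeping.
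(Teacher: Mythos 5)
Your proposal is correct and follows essentially the same route as the paper: start from the Mahler--Weyl basis, then add a small integer multiple of the previous (already corrected) basis vector to the next one, choosing the integer by pigeonhole among $n+1$ candidates so that no coordinate in the previous vector's support gets cancelled, and control the norms by the resulting linear recursion. The only differences are cosmetic (indexing and the explicit recursion for the ratios $A_s$).
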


\begin{proof}
By Theorem \ref{theorem:Mahler-Weyl}, there exists a basis $\bs{v}^{1},\dotsc,\bs{v}^{n}$ of $\Lambda$, such that $|\bs{v}^{s}|_{2}\ll_{n}\lambda_{s}$ for $s=1,\dotsc,n$. We set
\begin{equation}
\tilde{\bs{v}}^{s}:=\begin{cases}
\bs{v}^{1} & \quad\text{if }s=1 \\
\bs{v}^{s}+c_{s}\tilde{\bs{v}}^{s-1} & \quad\text{if }s>1
\end{cases},\nonumber
\end{equation}
where $c_{s}\in\mb{Z}$ are some coefficients yet to be chosen. Clearly, the vectors $\tilde{\bs{v}}^{1},\dotsc,\tilde{\bs{v}}^{n}$ form a basis for the lattice $\Lambda$ independently of the choice that we make for the coefficients $c_{s}$. We define these coefficients by recursion on $s$. Suppose that $\tilde{\bs{v}}^{s}$ are defined for all $s<\underline{s}$, where $1<\underline{s}\leq n$. To define $\tilde{\bs{v}}^{\underline{s}}$, we use the following procedure. First, we set $c_{\underline{s}}:=0$. If $\mathfrak{I}\left(\tilde{\bs{v}}^{\underline{s}-1}\right)\subset \mathfrak{I}\left(\bs{v}^{\underline{s}}\right)$, there is nothing to prove. If $\mathfrak{I}\left(\tilde{\bs{v}}^{\underline{s}-1}\right)\not\subset \mathfrak{I}\left(\bs{v}^{\underline{s}}\right)$, we change the value of the coefficient $c_{\underline{s}}$ to $1$. Then, if $\mathfrak{I}\left(\tilde{\bs{v}}^{\underline{s}-1}\right)\subset \mathfrak{I}\left(\bs{v}^{\underline{s}}+\tilde{\bs{v}}^{\underline{s}-1}\right)$, we have found a suitable value for the coefficient $c_{\underline{s}}$, otherwise, it means that the vectors $\tilde{\bs{v}}^{\underline{s}-1}$ and $\bs{v}^{\underline{s}}$ share some non-zero component of equal modulus but opposite sign. If this happens, we set $c_{\underline{s}}:=2$. Then, again, either we have found a suitable value for the coefficient $c_{\underline{s}}$, or $\mathfrak{I}\left(\tilde{\bs{v}}^{\underline{s}-1}\right)\not\subset \mathfrak{I}\left(\bs{v}^{\underline{s}}+2\tilde{\bs{v}}^{\underline{s}-1}\right)$. This implies that some of the non-zero components of the vector $\bs{v}^{\underline{s}}$ are twice the same components of the vector $\tilde{\bs{v}}^{\underline{s}-1}$, up to a change of sign. If this is the case, we set $c_{\underline{s}}:=3$, and so on. As soon as $\mathfrak{I}\left(\tilde{\bs{v}}^{\underline{s}-1}\right)\subset \mathfrak{I}\left(\bs{v}^{\underline{s}}+c_{\underline{s}}\tilde{\bs{v}}^{\underline{s}-1}\right)$, we fix the value of the coefficient $c_{\underline{s}}$. Each non-zero component of the vector $\bs{v}^{\underline{s}}$ can exclude at most one value for the coefficient $c_{\underline{s}}$. Hence, the process terminates in at most $n+1$ steps.

To conclude, we show by recursion on $s$ that $\left|\bs{v}^{s}\right|\ll_{n}\lambda_{s}$ for $s=1,\dotsc,n$. Suppose that this is true for all the indices less than a fixed index $\underline{s}>1$. Then, we have that
$$\left|\tilde{\bs{v}}^{\underline{s}}\right|_{2}=\left|\bs{v}^{\underline{s}-1}+c_{\underline{s}}\tilde{\bs{v}}^{\underline{s}-1}\right|_{2}\leq\left|\bs{v}^{\underline{s}}\right|_{2}+(n+1)\left|\tilde{\bs{v}}^{\underline{s}-1}\right|_{2}\ll_{n}\lambda_{\underline{s}}+(n+1)\lambda_{\underline{s}-1}\ll_{n}\lambda_{\underline{s}},$$
and this completes the proof. 
\end{proof}
Now, we move back to our original setting. We write $\bs{v}=(\bs{x},\bs{y})$, with $\bs{x}\in\mb{R}^{\mc{M}}$ and $\bs{y}\in\mb{R}^{\mc{N}}$, for vectors $\bs{v}\in\mb{R}^{\mc{M}+\mc{N}}$. We fix an index $k\in \mc{K}$ and we denote by $\lambda_{1},\dotsc,\lambda_{\mc{M}+\mc{\mc{N}}}$ the successive minima of the lattice $\omega_{2}\circ\omega_{1}\circ\hat{\varphi}_{k}(\Lambda_{\bs{L}})$.

By Lemma \ref{lem:monotonicity}, there exists a basis $\bs{v}^{1},\dotsc,\bs{v}^{\mc{M}+\mc{N}}$ of the lattice $\omega_{2}\circ\omega_{1}\circ\hat{\varphi}_{k}(\Lambda_{\bs{L}})$ such that
\begin{equation}
\label{eq:parti}
|\bs{v}^{s}|_{2}\ll_{\mc{M}+\mc{N}}\lambda_{s}\mbox{ for }s=1,\dotsc,\mc{M}+\mc{\mc{N}},
\end{equation}
and
\begin{equation}
\label{eq:partii}
\mathfrak{I}\left(\bs{v}^{s}\right)\subseteq \mathfrak{I}\left(\bs{v}^{s+1}\right)\mbox{ for }s=1,\dotsc,\mc{M}+\mc{N}-1.
\end{equation}
Moreover, by the definition of the maps $\varphi_{k}$ (Proposition \ref{thm:partition}), we can write
\begin{equation}
\label{eq:partiii}
\bs{v}^{s}=\left(\theta e^{a^{k}_{1}-c}\left(L_{1}\bs{q}^{s}+p^{s}_{1}\right),\dotsc,\theta e^{a^{k}_{\mc{M}}-c}\left(L_{\mc{M}}\bs{q}^{s}+p^{s}_{\mc{M}}\right),\theta^{-\frac{\mc{M}}{\mc{N}}}\frac{Q}{Q_{1}}q^{s}_{1},\dotsc,\theta^{-\frac{\mc{M}}{\mc{N}}}\frac{Q}{Q_{\mc{N}}}q^{s}_{\mc{N}}\right)
\end{equation}
for some fixed $\bs{p}^{s}\in\mb{Z}^{\mc{M}}$ and $\bs{q}^{s}\in\mb{Z}^{\mc{N}}$. Then, from (\ref{eq:parti}) and (\ref{eq:partiii}) we deduce that
\begin{multline}
\label{eq:length}
\lambda_{s}\gg_{\mc{M},\mc{N}}|\bs{v}^{s}|_{2}=\Bigg(\theta^{2} e^{2a^{k}_{1}}\left(L_{1}\bs{q}^{s}+p^{s}_{1}\right)^{2}+\dotsb+\theta^{2} e^{2a^{k}_{\mc{M}}}\left(L_{\mc{M}}\bs{q}^{s}+p^{s}_{\mc{M}}\right)^{2}+ \\
\left. +\ \theta^{-\frac{2\mc{M}}{\mc{N}}}\frac{Q^{2}}{Q_{1}^{2}}\left(q^{s}_{1}\right)^{2}+\dotsb+\theta^{-\frac{2\mc{M}}{\mc{N}}}\frac{Q^{2}}{Q^{2}_{\mc{N}}}\left(q^{s}_{\mc{N}}\right)^{2}\right)^{\frac{1}{2}}
\end{multline}
for $s=1,\dotsc,\mc{M}+\mc{N}$.

The following lemma shows that, without loss of generality, we can additionally assume that $\bs{q}^{s}\neq\bs{0}$ for all $s=1,\dotsc,\mc{M}+\mc{N}$.

\begin{lem}
\label{lem:qneq0}
Suppose that $\bs{q}^{s_{0}}=\bs{0}$ for some $1\leq s_{0}\leq \mc{M}+\mc{N}$. Then, we have that
\begin{equation}
\frac{\left(\varepsilon Q^{\mc{N}}\right)^{\frac{s}{\mc{M}+\mc{N}}}}{\lambda_{1}\dotsm\lambda_{s}}\ll_{\mc{M},\mc{N}}T^{s}\nonumber
\end{equation}
for $s=1,\dotsc,\mc{M}+\mc{N}$.
\end{lem}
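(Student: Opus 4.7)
The plan is to exploit the monotonicity property $\mathfrak{I}(\bs{v}^s)\subseteq \mathfrak{I}(\bs{v}^{s+1})$ from (\ref{eq:partii}) to propagate the vanishing of $\bs{q}^{s_0}$ all the way down to $s=1$, and then to extract a clean lower bound on $\lambda_1$ from the partition estimate $iia)$ of Proposition \ref{thm:partition}. Once this bound is in hand, the inequality for every $s$ follows at once from the monotonicity of the successive minima.

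In more detail, I would first observe that $\bs{q}^{s_0}=\bs{0}$ is equivalent, via (\ref{eq:partiii}), to $\mathfrak{I}(\bs{v}^{s_0})\cap\{\mc{M}+1,\dotsc,\mc{M}+\mc{N}\}=\emptyset$. Iterating (\ref{eq:partii}) downwards from $s_0$ to $1$, the same property holds for every $\bs{v}^s$ with $s\leq s_0$; in particular $\bs{q}^1=\bs{0}$. Since $\bs{v}^1\neq\bs{0}$, this forces $\bs{p}^1\in\mb{Z}^{\mc{M}}\setminus\{\bs{0}\}$, so $|\bs{p}^1|_\infty\geq 1$.

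Next, I would plug $s=1$ into (\ref{eq:partiii}), choose any index $i_0$ with $p^1_{i_0}\neq 0$, and combine property $iia)$ of Proposition \ref{thm:partition} with the definition of $\theta$ to estimate
\begin{equation*}
|\bs{v}^1|_2\;\geq\;\theta\,e^{a^k_{i_0}-c}\,|p^1_{i_0}|\;\gg_{\mc{M}}\;\theta\cdot\frac{\varepsilon^{1/\mc{M}}}{T}\;=\;\frac{\left(\varepsilon Q^{\mc{N}}\right)^{1/(\mc{M}+\mc{N})}}{T}.
\end{equation*}
Reading (\ref{eq:parti}) in the form $\lambda_s\gg_{\mc{M}+\mc{N}}|\bs{v}^s|_2$ at $s=1$ then yields $\lambda_1\gg_{\mc{M},\mc{N}}\left(\varepsilon Q^{\mc{N}}\right)^{1/(\mc{M}+\mc{N})}/T$. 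Since the successive minima are non-decreasing, $\lambda_1\dotsm\lambda_s\geq\lambda_1^s$, and raising the above lower bound to the $s$-th power followed by rearrangement delivers the desired inequality for every $s=1,\dotsc,\mc{M}+\mc{N}$.

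The only genuinely delicate step is the propagation argument in the second paragraph: it is precisely the monotonicity property built into Lemma \ref{lem:monotonicity} that converts a purely local hypothesis at a single index $s_0$ into a uniform lower bound on $\lambda_1$, which then controls every product $\lambda_1\dotsm\lambda_s$ simultaneously. Without this feature of the basis, one would have no handle on which coordinates of the lower-index $\bs{v}^s$ can vanish, and the hypothesis $\bs{q}^{s_0}=\bs{0}$ would be too weak on its own to drive the estimate.
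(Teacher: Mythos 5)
Your proof is correct and follows essentially the same route as the paper: propagate $\bs{q}^{s}=\bs{0}$ down to $s=1$ via the nesting of the supports, conclude $\bs{p}^{1}\neq\bs{0}$, use part $iia)$ of Proposition \ref{thm:partition} together with the definition of $\theta$ to get $\lambda_{1}\gg_{\mc{M},\mc{N}}\left(\varepsilon Q^{\mc{N}}\right)^{1/(\mc{M}+\mc{N})}/T$, and finish with $\lambda_{1}\dotsm\lambda_{s}\geq\lambda_{1}^{s}$. No issues.
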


\begin{proof}
By (\ref{eq:partii}), we have that $\bs{q}^{s}=\bs{0}$ for all $s\leq s_{0}$. It follows that $\bs{p}^{s}\neq\bs{0}$ for all $s\leq s_{0}$. Hence, by part $iia)$ of Proposition \ref{thm:partition}, we deduce that
$$\lambda_{s}\geq\lambda_{1}\geq\theta\min_{i}e^{a^{k}_{i}}\gg_{\mc{M},\mc{N}}\frac{\left(\varepsilon Q^{\mc{N}}\right)^{\frac{1}{\mc{M}+\mc{N}}}}{\varepsilon^{\frac{1}{\mc{M}}}}\frac{\varepsilon^{\frac{1}{\mc{M}}}}{T}=\frac{1}{T}\left(\varepsilon Q^{\mc{N}}\right)^{\frac{1}{\mc{M}+\mc{N}}}$$
for all $s=1,\dotsc,\mc{M}+\mc{N}$. The claim follows directly from this inequality.
\end{proof}

Now, given that $\bs{q}^{s}\neq\bs{0}$ for all $s=1,\dotsc,\mc{M}+\mc{N}$, we deduce that $v^{s}_{i}=\theta(L_{i}\bs{q}+p_{i})\neq 0$ for all $i=1,\dotsc,\mc{M}$, since the entries of the row $L_{i}$ of the matrix $\bs{L}$ along with the integer $1$ are linearly independent over $\mb{Q}$ for $i=1,\dotsc,\mc{M}$. Hence, in view of Lemma \ref{lem:qneq0}, we can make the assumption that
\begin{equation}
\label{eq:partiv}
\{1,\dotsc,\mc{M}\}\subsetneq\mathfrak{I}\left(\bs{v}^{s}\right)
\end{equation}
for all $s=1,\dotsc,\mc{M}+\mc{N}$.

To conclude this subsection, we show that, by conveniently permuting the variables $y_{1},\dotsc ,y_{\mc{N}}$, we can further assume that the sets $\mathfrak{I}(\bs{v}^{s})$ have a nice "triangular" structure.

\begin{lem}
\label{lem:monotonicity2}
There exists a permutation of the variables $y_{1},\dotsc,y_{\mc{N}}$ such that for all the indices $s,l\in\{1,\dotsc,\mc{M}+\mc{N}\}$ we have that
\begin{equation}
\label{eq:partv}
l\in\mathfrak{I}\left(\bs{v}^{s}\right)\Rightarrow\{1,\dotsc,l\}\subset\mathfrak{I}\left(\bs{v}^{s}\right).
\end{equation}
\end{lem}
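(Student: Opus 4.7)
The plan is to reduce the statement to a combinatorial fact about a nested chain of subsets, using the two previously established properties (\ref{eq:partii}) and (\ref{eq:partiv}). Since (\ref{eq:partiv}) gives $\{1,\dotsc,\mc{M}\}\subseteq\mathfrak{I}(\bs{v}^s)$ for every $s$, condition (\ref{eq:partv}) is automatic for indices $l\leq\mc{M}$, so only the "$\bs{y}$-part" $J_s:=\mathfrak{I}(\bs{v}^s)\cap\{\mc{M}+1,\dotsc,\mc{M}+\mc{N}\}$ matters. By (\ref{eq:partii}) these form a nested chain
$$J_1\subseteq J_2\subseteq\dotsb\subseteq J_{\mc{M}+\mc{N}}\subseteq\{\mc{M}+1,\dotsc,\mc{M}+\mc{N}\}.$$

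Set $r_s:=|J_s|$, so that $r_1\leq r_2\leq\dotsb\leq r_{\mc{M}+\mc{N}}\leq\mc{N}$. I would then construct a bijection $\tau$ of $\{\mc{M}+1,\dotsc,\mc{M}+\mc{N}\}$ to itself by enumerating its elements as follows: first list the $r_1$ elements of $J_1$ in arbitrary order, then the elements of $J_2\setminus J_1$, then those of $J_3\setminus J_2$, and so on, finally appending the indices in $\{\mc{M}+1,\dotsc,\mc{M}+\mc{N}\}\setminus J_{\mc{M}+\mc{N}}$. Sending the $l$-th element of this list to $\mc{M}+l$ defines $\tau$, and by construction $\tau(J_s)=\{\mc{M}+1,\dotsc,\mc{M}+r_s\}$ is an initial segment for every $s$. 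Defining the permutation $\sigma$ of $\{1,\dotsc,\mc{N}\}$ by $\sigma(j):=\tau(\mc{M}+j)-\mc{M}$ and applying it to the last $\mc{N}$ coordinates, the resulting index sets are downward closed, giving (\ref{eq:partv}).

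The only thing to check is that such a relabelling of $y_1,\dotsc,y_{\mc{N}}$ is compatible with the rest of the argument. Permuting these variables amounts to simultaneously permuting the columns of the matrix $\bs{L}$ and the entries of $\bs{Q}$; crucially, both the hypothesis of $\phi$-multiplicative bad approximability (\ref{eq:condition1}) and the statement of Proposition \ref{prop:firstminima} are symmetric under this simultaneous permutation, since the right-hand side of (\ref{eq:firstminima}) depends on $\bs{Q}$ only through the geometric mean $Q=(Q_1\dotsm Q_{\mc{N}})^{1/\mc{N}}$. Thus the permuted lattice, its basis, and its successive minima play exactly the same role as the original ones, and we may work from here on under assumption (\ref{eq:partv}). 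There is no real analytic obstacle; the whole lemma is combinatorial, and the main subtlety is simply recognising that the nested chain (\ref{eq:partii}) together with (\ref{eq:partiv}) is strong enough to allow such a simultaneous "triangularisation" via a single permutation of coordinates.
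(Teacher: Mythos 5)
Your proof is correct and follows essentially the same route as the paper: both exploit the nesting (\ref{eq:partii}) of the sets $\mathfrak{I}(\bs{v}^{s})\setminus\{1,\dotsc,\mc{M}\}$ together with (\ref{eq:partiv}), and build the permutation by listing the newly appearing indices block by block so that each index set becomes an initial segment. Your additional remark that the relabelling is harmless because the hypotheses and the target estimate are invariant under a simultaneous permutation of the columns of $\bs{L}$ and the entries of $\bs{Q}$ is a sensible point of due diligence that the paper leaves implicit.
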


\begin{proof}
By (\ref{eq:partiv}) we can assume that
$$\{1,\dotsb,\mc{M}\}\subsetneq \mathfrak{I}\left(\bs{v}^{s}\right)$$
for all $s=1,\dotsc,\mc{M}+\mc{N}$. Let us consider the sets
$$R_{s}:=\mathfrak{I}\left(\bs{v}^{s}\right)\setminus\left\{1,\dotsc,\mc{M}\right\}\neq\emptyset$$
for $s=1,\dotsc,\mc{M}+\mc{N}$, and let $R_{0}:=\emptyset$. Let also $r_{s}:=\# R_{s}$ for $s=0,\dotsc,\mc{M}+\mc{N}$. To define the required permutation, we send the variables in the set $\{y_{j}:\mc{M}+j\in R_{s}\setminus R_{s-1}\}$ to the variables in the set $\{y_{r_{s-1}+1},\dotsc,y_{r_{s}}\}$ for $s=1,\dotsc,\mc{M}+\mc{N}$, i.e., we reorder the variables so that the null components of each basis vector are the ones with higher indices. This simple procedure delivers the required result.
\end{proof}

Let $\mc{M}+1\leq s_{0}\leq\mc{M}+\mc{N}$. Since the vectors $\bs{v}^{1},\dotsc,\bs{v}^{s_{0}}$ cannot all lie in the subspace $\{y_{s_{0}-\mc{M}}=\dotsb =y_{\mc{N}}=0\}$ (which has dimension $s_{0}$), there must be an index $s\leq s_{0}$ such that one among the components $v^{s}_{s_{0}},\dotsc,v^{s}_{\mc{M}+\mc{N}}$ of the vector $\bs{v}^{s}$ is non-zero. By (\ref{eq:partii}), we can take $s=s_{0}$. Therefore, from (\ref{eq:partv}), we deduce that $v^{s_{0}}_{\mc{M}+1},\dotsc,v^{s_{0}}_{s_{0}}\neq 0$, and hence $\{1,\dotsc,s_{0}\}\subset \mathfrak{I}\left(\bs{v}^{s_{0}}\right)$. In view of this and of (\ref{eq:partiv}), we can assume that
\begin{equation}
\label{eq:partvi}
\{1,\dotsc,\max\{\mc{M}+1,s\}\}\subset\mathfrak{I}\left(\bs{v}^{s}\right)
\end{equation}
for all $s=1,\dotsc,\mc{M}+\mc{N}$.
Now, we have a sufficiently "nice" basis of the lattice $\omega_{2}\circ\omega_{1}\circ\hat{\varphi}_{k}(\Lambda_{\bs{L}})$ and we can proceed with the proof of Proposition \ref{prop:firstminima}.

\subsection{Proof}

Throughout this section we fix and index $k\in\mc{K}$ and a basis $\{\bs{v}^{1},\dotsc,\bs{v}^{\mc{M}+\mc{N}}\}$ of the lattice $\omega_{2}\circ\omega_{1}\circ\hat{\varphi}_{k}(\Lambda_{\bs{L}})$ satisfying (\ref{eq:parti}), (\ref{eq:partii}), (\ref{eq:partiii}), (\ref{eq:partv}), and (\ref{eq:partvi}). Note that (\ref{eq:partvi}) implies that $q^{s}_{1}\neq 0$ for all $s=1,\dotsc,\mc{M}+\mc{N}$.

As mentioned in the Introduction, the main issue with the proof is represented by the null components of the basis vectors $\bs{v}^{1},\dotsc,\bs{v}^{\mc{M}+\mc{N}}$. We start by  presenting an instructive "naive approach" to the problem, that shows how our estimates of the products $\lambda_{1}\dotsm\lambda_{s}$ depend on the number of non-null components in the vector $\bs{q}^{1}$. This approach was briefly discussed in the Introduction, and is expanded on in the following lemma. 

\begin{lem}
\label{lem:sleqM}
Let $1\leq h_{1}\leq\mc{N}$ be the largest index such that $q^{1}_{h_{1}}\neq 0$. Then, for all $\underline{s}=1,\dotsc,\mc{M}+\mc{N}$ we have that
\begin{equation}
\label{eq:sleqM}
\frac{\left(\varepsilon Q^{\mc{N}}\right)^{\frac{\underline{s}}{\mc{M}+\mc{N}}}}{\lambda_{1}\dotsm\lambda_{\underline{s}}}\ll_{\mc{M},\mc{N}}1+\left(\frac{\varepsilon Q^{\mc{N}}}{\phi\left(Q\right)}\right)^{\frac{\underline{s}}{\mc{M}+h_{1}}}.
\end{equation}
\end{lem}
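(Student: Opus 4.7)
Since $\lambda_1\leq\dotsb\leq\lambda_{\underline{s}}$, we have $\lambda_1\dotsm\lambda_{\underline{s}}\geq\lambda_1^{\underline{s}}$, so (\ref{eq:sleqM}) will follow from the single inequality
$$\frac{V^{1/(\mc{M}+\mc{N})}}{\lambda_1}\ll_{\mc{M},\mc{N}}\max\left\{1,\left(\frac{V}{\phi(Q)}\right)^{1/(\mc{M}+h_1)}\right\},$$
where $V:=\varepsilon Q^{\mc{N}}$: raising to the $\underline{s}$-th power bounds the left-hand side of (\ref{eq:sleqM}) by a sum of two terms of the required shape. The whole argument therefore reduces to a sharp lower bound on $\lambda_1$, and in view of (\ref{eq:parti}) it suffices to bound $|\bs{v}^1|_2$ from below.

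By the re-ordering carried out in Lemma \ref{lem:monotonicity2} together with (\ref{eq:partv}) and (\ref{eq:partvi}), we have $q^1_j\neq 0$ precisely for $j=1,\dotsc,h_1$, and the first $\mc{M}$ entries of $\bs{v}^1$ are automatically non-zero thanks to the $\mb{Q}$-linear independence of each row of $\bs{L}$ with $1$. Thus exactly $\mc{M}+h_1$ of the squared summands appearing in (\ref{eq:length}) are strictly positive. Applying the AM--GM inequality to those $\mc{M}+h_1$ summands and using $\sum_{i=1}^{\mc{M}}a^k_i=0$ from Proposition \ref{thm:partition}$(iib)$ to collapse the exponential weights yields
$$|\bs{v}^1|_2^{\mc{M}+h_1}\gg_{\mc{M},\mc{N}}\theta^{\mc{M}(\mc{N}-h_1)/\mc{N}}\prod_{i=1}^{\mc{M}}|L_i\bs{q}^1+p^1_i|\cdot\frac{Q^{h_1}}{\prod_{j=1}^{h_1}Q_j}\prod_{j=1}^{h_1}|q^1_j|.$$
Bounding $|L_i\bs{q}^1+p^1_i|\geq\|L_i\bs{q}^1\|$ and invoking $\phi$-multiplicative bad approximability on $\bs{q}^1$---noting that $\prod_{j=1}^{\mc{N}}\max\{1,|q^1_j|\}$ equals $P:=\prod_{j=1}^{h_1}|q^1_j|$, since $q^1_j=0$ for $j>h_1$---gives $\prod_{i=1}^{\mc{M}}\|L_i\bs{q}^1\|\geq\phi(P^{1/\mc{N}})/P$. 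The factor $P$ cancels, and after substituting $\theta=V^{1/(\mc{M}+\mc{N})}/\varepsilon^{1/\mc{M}}$ a direct computation condenses the estimate to
$$\lambda_1^{\mc{M}+h_1}\gg_{\mc{M},\mc{N}}\frac{V^{(\mc{M}+h_1)/(\mc{M}+\mc{N})}\,\phi(P^{1/\mc{N}})}{\varepsilon\prod_{j=1}^{h_1}Q_j}.$$

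The main obstacle is that the right-hand side still depends on $P$ and on $\prod_{j\leq h_1}Q_j$, neither of which is controlled a priori; I resolve this by a dichotomy on the size of $P^{1/\mc{N}}$ relative to $Q$. \emph{Case 1} ($P^{1/\mc{N}}\leq Q$): the monotonicity of $\phi$ gives $\phi(P^{1/\mc{N}})\geq\phi(Q)$, and the trivial bound $\varepsilon\prod_{j=1}^{h_1}Q_j\leq\varepsilon\prod_{j=1}^{\mc{N}}Q_j=V$ converts the previous display into $\lambda_1^{\mc{M}+h_1}\gg V^{(\mc{M}+h_1)/(\mc{M}+\mc{N})}\phi(Q)/V$, which rearranges exactly to $V^{1/(\mc{M}+\mc{N})}/\lambda_1\ll(V/\phi(Q))^{1/(\mc{M}+h_1)}$. \emph{Case 2} ($P^{1/\mc{N}}>Q$): I discard $\phi$ entirely and exploit the individual lower bounds $|\bs{v}^1|_2\geq\theta^{-\mc{M}/\mc{N}}Q|q^1_j|/Q_j$ coming from each non-vanishing $y$-component; taking the product over $j=1,\dotsc,h_1$, substituting $P>Q^{\mc{N}}$, and again using $\varepsilon\prod_{j\leq h_1}Q_j\leq V$, one obtains $\lambda_1^{h_1}\gg V^{h_1/(\mc{M}+\mc{N})}$, i.e.\ $V^{1/(\mc{M}+\mc{N})}/\lambda_1\ll 1$. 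The two cases together deliver the required maximum and complete the proof upon raising to the $\underline{s}$-th power. Conceptually, the appearance of $h_1$ in the exponent of (\ref{eq:sleqM}) reflects precisely the fact that AM--GM pins down only $\mc{M}+h_1$ of the squared entries of $|\bs{v}^1|_2^2$; the remaining $\mc{N}-h_1$ vanishing components are the limitation of the ``naive approach'' that will later force the use of the higher successive minima.
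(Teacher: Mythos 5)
Your proof is correct and follows essentially the same route as the paper's: reduce everything to a lower bound on $\lambda_{1}$ via $\lambda_{1}\dotsm\lambda_{\underline{s}}\geq\lambda_{1}^{\underline{s}}$, apply AM--GM to the $\mc{M}+h_{1}$ non-vanishing components of $\bs{v}^{1}$, invoke $\phi$-multiplicative bad approximability, and fall back on the trivial single-component bound $\lambda_{1}\gg\left(\varepsilon Q^{\mc{N}}\right)^{1/(\mc{M}+\mc{N})}$ in the degenerate case. The only (harmless) difference is the dichotomy: the paper splits on whether $|q^{1}_{j}|\leq Q_{j}$ for every $j$, whereas you split on whether $\prod_{j\leq h_{1}}|q^{1}_{j}|\leq Q^{\mc{N}}$; both partitions are exhaustive and yield the same two bounds.
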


\begin{proof}
From (\ref{eq:length}), we have that
\begin{multline}
\label{eq:lengthlem1}
\lambda_{1}\gg_{\mc{M},\mc{N}}\Bigg(\theta^{2} e^{2a^{k}_{1}}\left\|L_{1}\bs{q}^{1}\right\|^{2}+\dotsb+\theta^{2} e^{2a^{k}_{\mc{M}}}\left\|L_{\mc{M}}\bs{q}^{1}\right\|^{2}+ \\
\left. +\ \theta^{-\frac{2\mc{M}}{\mc{N}}}\frac{Q^{2}}{Q_{1}^{2}}\left(q^{1}_{1}\right)^{2}+\dotsb+\theta^{-\frac{2\mc{M}}{\mc{N}}}\frac{Q^{2}}{Q^{2}_{\mc{N}}}\left(q^{1}_{h_{1}}\right)^{2}\right)^{\frac{1}{2}}.
\end{multline}
We consider two different cases. Case $1$:
\begin{itemize}
\item for all $j=1,\dotsc,\mc{N}$, it holds $\left|q^{1}_{j}\right|\leq Q_{j}$.
\end{itemize}
In this case, we apply the standard arithmetic-geometric mean inequality to the right-hand side of (\ref{eq:lengthlem1}), finding
\begin{equation}
\label{eq:z}
\lambda_{1}\gg_{\mc{M},\mc{N}}\left(\theta^{\mc{M}\left(1-\frac{h_{1}}{\mc{N}}\right)}\prod_{i=1}^{\mc{M}}\left\|L_{i}\bs{q}^{1}\right\|\cdot
\prod_{j=1}^{h_{1}}\frac{Q}{Q_{j}}\left|q^{1}_{j}\right|\right)^{\frac{1}{\mc{M}+h_{1}}}.
\end{equation}
Since the matrix $\bs{L}$ is multiplicatively badly approximable, we deduce that
\begin{equation}
\label{eq:zz}
\prod_{i=1}^{\mc{M}}\left\|L_{i}\bs{q}^{1}\right\|\geq\frac{\phi\left(\left(\prod_{j=1}^{\mc{N}}\max\left\{1,\left|q^{1}_{j}\right|\right\}\right)^{\frac{1}{\mc{N}}}\right)}{\prod_{j=1}^{\mc{N}}\max\left\{1,\left|q^{1}_{j}\right|\right\}}\geq\frac{\phi\left(Q\right)}{\prod_{j=1}^{h_{1}}\left|q^{1}_{j}\right|}.
\end{equation}
Hence, on noting that
\begin{equation}
\label{eq:zzz}
\prod_{j=1}^{h_{1}}\frac{Q}{Q_{j}}\geq Q^{h_{1}-\mc{N}}\prod_{j=h_{1}+1}^{\mc{N}}Q_{j}\geq Q^{h_{1}-\mc{N}},
\end{equation}
and by substituting (\ref{eq:zz}) and (\ref{eq:zzz}) into (\ref{eq:z}), we obtain that
\begin{equation*}
\lambda_{1}\gg_{\mc{M},\mc{N}}\left(\theta^{\mc{M}\left(1-\frac{h_{1}}{\mc{N}}\right)}\phi(Q)Q^{h_{1}-\mc{N}}\right)^{\frac{1}{\mc{M}+h_{1}}}=\left(\varepsilon Q^{\mc{N}}\right)^{-\frac{1}{\mc{M}+h_{1}}+\frac{1}{\mc{M}+\mc{N}}}\phi(Q)^{\frac{1}{\mc{M}+h_{1}}},
\end{equation*}
whence
\begin{equation}
\label{eq:lengthlem12}
\frac{\left(\varepsilon Q^{\mc{N}}\right)^{\frac{\underline{s}}{\mc{M}+\mc{N}}}}{\lambda_{1}\dotsm\lambda_{\underline{s}}}\leq\frac{\left(\varepsilon Q^{\mc{N}}\right)^{\frac{\underline{s}}{\mc{M}+\mc{N}}}}{\lambda_{1}^{\underline{s}}}\ll_{\mc{M},\mc{N}}\left(\frac{\varepsilon Q^{\mc{N}}}{\phi\left(Q\right)}\right)^{\frac{\underline{s}}{\mc{M}+h_{1}}}.
\end{equation}
This completes the proof in case $1$.

Case $2$:
\begin{itemize}
\item there exist an index $1\leq j_{0}\leq h_{1}$ such that $\left|q^{1}_{j_{0}}\right|> Q_{j_{0}}$.
\end{itemize}
By ignoring all the terms but $\theta^{-2\mc{M}/\mc{N}}\left(Q/Q_{j_{0}}\right)^{2}(q^{1}_{j_{0}})^{2}$ in (\ref{eq:lengthlem1}), we obtain that
\begin{equation*}
\lambda_{1}\gg_{\mc{M},\mc{N}}\theta^{-\frac{\mc{M}}{\mc{N}}}Q=\left(\varepsilon Q^{\mc{N}}\right)^{\frac{1}{\mc{M}+\mc{N}}}.
\end{equation*}
Hence,
\begin{equation}
\label{eq:lengthlem13}
\frac{\left(\varepsilon Q^{\mc{N}}\right)^{\frac{\underline{s}}{\mc{M}+\mc{N}}}}{\lambda_{1}\dotsm\lambda_{\underline{s}}}\leq\frac{\left(\varepsilon Q^{\mc{N}}\right)^{\frac{\underline{s}}{\mc{M}+\mc{N}}}}{\lambda_{1}^{\underline{s}}}\ll_{\mc{M},\mc{N}}1.
\end{equation}
The result follows from (\ref{eq:lengthlem12}) and (\ref{eq:lengthlem13}).
\end{proof}

Now, to prove Proposition \ref{prop:firstminima}, we need the exponent of the ratio $\varepsilon Q^{\mc{N}}/\phi(Q)$ in (\ref{eq:sleqM}) to be less than or equal to $(\mc{M}+\mc{N}-1)/(\mc{M}+\mc{N})$. The fact that this exponent is strictly less than $1$ will be crucial to prove Theorem \ref{thm:cor2}. Lemma \ref{lem:sleqM} ensures that this holds true for $\underline{s}\leq\mc{M}$. However, for $\underline{s}\geq\mc{M}+1$, the result depends on the value of the index $h_{1}$. In particular, to deduce the desired estimate for
$$\frac{\left(\varepsilon Q^{\mc{N}}\right)^{\frac{\underline{s}}{\mc{M}+\mc{N}}}}{\lambda_{1}\dotsm\lambda_{\underline{s}}},$$
we need the number $\mc{M}+h_{1}$ to be at least $\underline{s}+1$, i.e., we need that $\{1,\dotsc,\underline{s}\}\subset\mathfrak{I}(\bs{v}^{1})$. This cannot be guaranteed, since the only information that we have with regards to the vector $\bs{q}^{1}$ is that $\bs{q}^{1}\neq\bs{0}$. 
In view of this, a slightly more sophisticated approach is required, and, in this approach, the higher successive minima $\lambda_{2},\dotsc,\lambda_{s}$ of the lattice $\omega_{2}\circ\omega_{1}\circ\hat{\varphi}_{k}(\Lambda_{\bs{L}})$ play a crucial role.

We will show that for a fixed index $\underline{s}$ it is not necessary to have $\{1,\dotsc,\underline{s}+1\}\subset\mathfrak{I}(\bs{v}^{1})$ to obtain (\ref{eq:firstminima}), but it suffices that $\{1,\dotsc,s+1\}\subset\mathfrak{I}(\bs{v}^{s})$ for all $s=1,\dotsc,\underline{s}$.

\begin{lem}
\label{lem:quickcase}
Let $\mc{M}\leq\underline{s}\leq \mc{M}+\mc{N}-1$ and suppose that for all $s=1,\dotsc,\underline{s}$ it holds $\{1,\dotsc,s+1\}\subseteq \mathfrak{I}\left(\bs{v}^{s}\right)$. Then, we have that
$$\frac{\left(\varepsilon Q^{\mc{N}}\right)^{\frac{\underline{s}}{\mc{M}+\mc{N}}}}{\lambda_{1}\dotsm\lambda_{\underline{s}}}\ll_{\mc{M},\mc{N}}1+\left(\frac{\varepsilon Q^{\mc{N}}}{\phi\left(Q\right)}\right)^{\frac{\underline{s}}{\underline{s}+1}}.$$
\end{lem}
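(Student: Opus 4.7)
My plan is to invoke the Mahler--Weyl comparison (\ref{eq:parti}) and Theorem \ref{theorem:Mahler-Weyl} to reduce the claim to a lower bound on the product $\prod_{s=1}^{\underline{s}} |\bs{v}^{s}|_{2}$, and then apply a \emph{coordinated, weighted} arithmetic--geometric-mean inequality to the vectors $\bs{v}^{s}$ simultaneously. Before describing the weights, note that a naive row-by-row unweighted AM--GM on $|\bs{v}^{s}|_{2}^{2}\geq\sum_{i\leq s+1}(v^{s}_{i})^{2}$, using the hypothesis that $s+1$ components of $\bs{v}^{s}$ are non-zero, yields $\lambda_{s}\gg_{\mc{M},\mc{N}}(\varepsilon Q^{\mc{N}})^{1/(\mc{M}+\mc{N})-1/(s+1)}\phi(Q)^{1/(s+1)}$; the product over $s$ then accumulates an exponent $\sum_{s=1}^{\underline{s}}1/(s+1)>\underline{s}/(\underline{s}+1)$ on the ratio $\phi(Q)/(\varepsilon Q^{\mc{N}})$, strictly weaker than the target. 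The sharper bound requires distributing the AM--GM weights across rows in a globally balanced way.

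Concretely, I would construct weights $w^{s}_{i}\geq 0$ for $s\in\{1,\dotsc,\underline{s}\}$ and $i\in\{1,\dotsc,\underline{s}+1\}$, supported in $\mathfrak{I}(\bs{v}^{s})\cap\{1,\dotsc,\underline{s}+1\}$, satisfying:
(i) row sums equal $1$, $\sum_{i}w^{s}_{i}=1$;
(ii) the $\mc{M}$ first entries of each row coincide with the row-wise minimum, $w^{s}_{1}=\dotsb=w^{s}_{\mc{M}}=:\tilde{w}^{s}\leq w^{s}_{i}$ for all $i$;
(iii) the column sums equal the common value $W:=\underline{s}/(\underline{s}+1)$ for every $i\in\{1,\dotsc,\underline{s}+1\}$.
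Feasibility of this weight system reduces to a routine transportation-type linear-programming exercise, using that (i)--(ii) force $\tilde{w}^{s}\leq 1/l_{s}$, which is compatible with $\sum_{s}\tilde{w}^{s}=W$ because $l_{s}\leq\underline{s}+1$ in the tight case $l_{s}=s+1$, with analogous inequalities taking care of the $y$-columns.

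With these weights fixed, the weighted AM--GM gives $|\bs{v}^{s}|_{2}\gg_{\mc{M}+\mc{N}}\prod_{i}|v^{s}_{i}|^{w^{s}_{i}}$, and multiplying over $s$ splits the right-hand side into a ``rescaling'' prefactor $\prod_{i=1}^{\mc{M}}\mu_{i}^{W}\prod_{j=1}^{\underline{s}+1-\mc{M}}\nu_{j}^{W}$, where $\mu_{i}=\theta e^{a^{k}_{i}-c}$ and $\nu_{j}=\theta^{-\mc{M}/\mc{N}}Q/Q_{j}$, and an ``arithmetic'' factor
$$\prod_{s=1}^{\underline{s}}\left(\prod_{i=1}^{\mc{M}}|L_{i}\bs{q}^{s}+p^{s}_{i}|^{w^{s}_{i}}\prod_{j=1}^{\mc{N}}|q^{s}_{j}|^{w^{s}_{\mc{M}+j}}\right).$$
Using $\sum_{i}a^{k}_{i}=0$, the identity $\prod_{j=1}^{\mc{N}}Q_{j}=Q^{\mc{N}}$, and the trivial bound $Q_{j}\geq 1$, a direct algebraic computation reduces the rescaling prefactor to $(\varepsilon Q^{\mc{N}})^{\underline{s}/(\mc{M}+\mc{N})-W}$ up to a constant depending only on $\mc{M},\mc{N}$.

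For the arithmetic factor, in Case A ($|q^{s}_{j}|\leq Q_{j}$ for all $s,j$) the $\phi$-multiplicative bad approximability of $\bs{L}$ gives, for each $s$, $\sum_{i\leq\mc{M}}\log\|L_{i}\bs{q}^{s}\|+\sum_{j}\log\max\{1,|q^{s}_{j}|\}\geq\log\phi(Q)$. Setting $a^{s}_{i}:=\log\|L_{i}\bs{q}^{s}\|\leq 0$ for $i\leq\mc{M}$ and $a^{s}_{\mc{M}+j}:=\log\max\{1,|q^{s}_{j}|\}\geq 0$, the decomposition
$$\sum_{i}w^{s}_{i}a^{s}_{i}=\tilde{w}^{s}\sum_{i}a^{s}_{i}+\sum_{i}(w^{s}_{i}-\tilde{w}^{s})a^{s}_{i}\geq\tilde{w}^{s}\log\phi(Q)$$
holds, because by (ii) the excess weight $w^{s}_{i}-\tilde{w}^{s}$ vanishes precisely on the coordinates $i\leq\mc{M}$ where $a^{s}_{i}\leq 0$, and is non-negative on the remaining coordinates where $a^{s}_{i}\geq 0$. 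Summing over $s$ and exponentiating delivers an arithmetic contribution of at least $\phi(Q)^{\sum_{s}\tilde{w}^{s}}=\phi(Q)^{W}$. Combining with the rescaling prefactor yields $\prod_{s}|\bs{v}^{s}|_{2}\gg_{\mc{M},\mc{N}}(\varepsilon Q^{\mc{N}})^{\underline{s}/(\mc{M}+\mc{N})-W}\phi(Q)^{W}$, which upon rearrangement is equivalent to the claimed inequality. The alternative Case B (some $|q^{s}_{j_{0}}|>Q_{j_{0}}$) is dispatched as at the end of the proof of Lemma \ref{lem:sleqM}, since the single coordinate $|v^{s}_{\mc{M}+j_{0}}|$ alone exceeds $(\varepsilon Q^{\mc{N}})^{1/(\mc{M}+\mc{N})}$ and is absorbed into the ``$1+$'' term on the right-hand side of \eqref{eq:firstminima}. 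The main technical content lies in the explicit construction of the weight system in (i)--(iii) and in the careful simplification of the rescaling prefactor to a power of $\varepsilon Q^{\mc{N}}$ alone, free of any dependence on the individual $Q_{j}$'s.
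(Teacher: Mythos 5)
Your overall strategy is the same as the paper's: reduce to a lower bound for $\prod_{s}|\bs{v}^{s}|_{2}$ via Mahler--Weyl, apply a weighted AM--GM row by row with weights chosen so that the column sums (the total exponents of the rescaling factors $\mu_{i},\nu_{j}$) are balanced, and invoke multiplicative bad approximability once per row. The paper implements exactly this through the recursively defined weight sums $k_{s}$ of (\ref{eq:fillingup}) and Lemma \ref{lem:exponentswelldef}. However, your specific design choice creates a genuine gap. You restrict the support of the weights to the columns $\{1,\dotsc,\underline{s}+1\}$ and force every column sum there to equal $W=\underline{s}/(\underline{s}+1)$. The multiplicative bad approximability inequality for row $s$ reads $\prod_{i}\|L_{i}\bs{q}^{s}\|\cdot\prod_{j}\max\{1,|q^{s}_{j}|\}\geq\phi(Q)$, where the product over $j$ runs over \emph{all} nonzero components of $\bs{q}^{s}$, including those with $j>\underline{s}+1-\mc{M}$ (nothing in the hypothesis prevents $h_{s}>\underline{s}+1-\mc{M}$; e.g.\ $\bs{q}^{1}$ may have all components nonzero while $\underline{s}=\mc{M}$). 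On such a coordinate $i=\mc{M}+j$ you have $w^{s}_{i}=0<\tilde{w}^{s}$ while $a^{s}_{i}=\log|q^{s}_{j}|\geq 0$ may be large, so the term $(w^{s}_{i}-\tilde{w}^{s})a^{s}_{i}$ is negative and your claim that the excess weight ``is non-negative on the remaining coordinates'' fails; the bound $\prod_{i}|v^{s}_{i}|^{w^{s}_{i}}\gg\phi(Q)^{\tilde{w}^{s}}\cdot(\text{rescaling})$ is therefore not established. The paper avoids this by assigning positive weight to \emph{every} nonzero coordinate of $\bs{v}^{s}$, accepting column sums $\alpha_{\underline{s}j}\leq\alpha_{\underline{s}}\leq W$ on the tail columns, and then disposing of the resulting factors $(Q/Q_{j})^{\alpha_{\underline{s}j}-\alpha_{\underline{s}}}$ with $Q_{j}\geq 1$ in (\ref{eq:expratios3})--(\ref{eq:nnew}). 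Your scheme can likely be repaired within Case A by bounding $|q^{s}_{j}|\leq Q_{j}$ on the unweighted coordinates and checking that the resulting powers of $Q_{j}$ cancel against the rescaling prefactor, but as written the key inequality is false.

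Two further points. First, the feasibility of your weight system (i)--(iii) is not ``routine'': condition (ii) together with equal column sums is quite rigid (in small examples it pins down the weights uniquely and reproduces the paper's $1/k_{s}$), so a genuine proof of existence essentially amounts to the recursive construction (\ref{eq:fillingup}) plus Lemma \ref{lem:exponentswelldef}, which is where most of the work lies. Second, your Case B is not correctly dispatched: if $s_{0}>1$ is the first index with $|q^{s_{0}}_{j_{0}}|>Q_{j_{0}}$, then only $\lambda_{s}$ for $s\geq s_{0}$ satisfies $\lambda_{s}\gg(\varepsilon Q^{\mc{N}})^{1/(\mc{M}+\mc{N})}$, and the truncated product $\lambda_{1}\dotsm\lambda_{s_{0}-1}$ still has to be estimated by the Case A argument (or Lemma \ref{lem:sleqM}); it is not simply absorbed into the ``$1+$'' term.
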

\noindent The proof of Lemma \ref{lem:quickcase} is rather hefty and we postpone it to Section \ref{sec:quickcase}.

Let us fix an index $\underline{s}\in\{1,\dotsc,\mc{M}+\mc{N}\}$. Lemma \ref{lem:quickcase} is applicable whenever the condition $\{1,\dotsc,s+1\}\subset\mathfrak{I}\left(\bs{v}^{s}\right)$ holds for all $s=1,\dotsc,\underline{s}$. However, this is not always the case. Indeed, the only information that we have with regards to the basis vectors $\bs{v}^{1},\dotsc,\bs{v}^{\underline{s}}$ is that $\{1,\dotsc,\max\{\mc{M},s\}\}\subset\mathfrak{I}\left(\bs{v}^{s}\right)$ for $s=1,\dotsc,\underline{s}$. In particular, the condition $\{1,\dotsc,s+1\}\subset\mathfrak{I}\left(\bs{v}^{s}\right)$ never holds for $s=\mc{M}+\mc{N}$. Thus, we are left with one extra case to settle, i.e., the case when there exists an index $s\leq\underline{s}$ such that $\mathfrak{I}\left(\bs{v}^{s}\right)=\{1,\dotsc,s\}$. We deal with this case in the following lemma.

\begin{lem}
\label{lem:slowcase}
Let $\mc{M}+1\leq\underline{s}\leq \mc{M}+\mc{N}$ and assume that there exists an index $\mc{M}+1\leq s_{0}\leq\underline{s}$ such that $\mathfrak{I}\left(\bs{v}^{s_{0}}\right)=\{1,\dotsc,s_{0}\}$. Then, we have that
$$\frac{\left(\varepsilon Q^{\mc{N}}\right)^{\frac{\underline{s}}{\mc{M}+\mc{N}}}}{\lambda_{1}\dotsm\lambda_{\underline{s}}}\ll_{\mc{M},\mc{N}}\varepsilon Q^{\mc{N}}.$$
\end{lem}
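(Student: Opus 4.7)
The plan is to establish the sharper lower bound
\[
\lambda_{1}\dotsm\lambda_{\underline{s}}\gg_{\mc{M},\mc{N}}\left(\varepsilon Q^{\mc{N}}\right)^{(\underline{s}-\mc{M}-\mc{N})/(\mc{M}+\mc{N})},
\]
from which the lemma follows by a single line of algebra on dividing into $(\varepsilon Q^{\mc{N}})^{\underline{s}/(\mc{M}+\mc{N})}$. Write $h_{0}:=s_{0}-\mc{M}$. By (\ref{eq:parti}) it suffices to bound $|\bs{v}^{1}|_{2}\dotsm|\bs{v}^{\underline{s}}|_{2}$ from below, and Hadamard's inequality combined with the Cauchy--Binet formula shows that this product dominates the absolute value of any $\underline{s}\times\underline{s}$ minor $|\det V_{S}|$ of the matrix $V$ with rows $\bs{v}^{1},\dotsc,\bs{v}^{\underline{s}}$. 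The hypothesis $\mathfrak{I}(\bs{v}^{s_{0}})=\{1,\dotsc,s_{0}\}$ together with (\ref{eq:partii}) forces $\bs{v}^{s}$ to vanish on coordinates $s_{0}+1,\dotsc,\mc{M}+\mc{N}$ for every $s\leq s_{0}$, so the choice $S=\{1,\dotsc,s_{0}\}\cup T$ with $T\subseteq\{s_{0}+1,\dotsc,\mc{M}+\mc{N}\}$ of size $\underline{s}-s_{0}$ makes $V_{S}$ block-upper-triangular, yielding $|\det V_{S}|=|\det V_{S}^{\mathrm{top}}|\cdot|\det V_{S}^{\mathrm{bot}}|$.

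For the top block, I would first apply column operations cancelling the $L_{i}\bs{q}^{s}$ terms, reducing $V_{S}^{\mathrm{top}}$ to a product of the diagonal scalings $\theta e^{a^{k}_{i}-c}$ and $\theta^{-\mc{M}/\mc{N}}(Q/Q_{j})$ with an $s_{0}\times s_{0}$ integer matrix $P$ whose rows are $(\bs{p}^{s},q^{s}_{1},\dotsc,q^{s}_{h_{0}})$. The crucial arithmetic input is that $(\bs{p}^{s},\bs{q}^{s})_{s=1}^{\mc{M}+\mc{N}}$ is a $\mb{Z}$-basis of $\mb{Z}^{\mc{M}+\mc{N}}$, since the $\bs{v}^{s}$ form a basis of $\omega_{2}\circ\omega_{1}\circ\hat{\varphi}_{k}(\Lambda_{\bs{L}})$ and $\bs{A}_{\bs{L}}$ is unimodular. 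The vanishing property above makes this integer basis itself block-upper-triangular, so $|\det P|\cdot|\det R|=1$, where $R$ is the $(\mc{N}-h_{0})\times(\mc{N}-h_{0})$ integer matrix with rows $\bs{r}^{s}:=(q^{s}_{h_{0}+1},\dotsc,q^{s}_{\mc{N}})$ for $s>s_{0}$; since both factors are positive integers, both equal $1$. Using $\theta^{-\mc{M}/\mc{N}}Q=(\varepsilon Q^{\mc{N}})^{1/(\mc{M}+\mc{N})}$ and $\sum_{i=1}^{\mc{M}}a^{k}_{i}=0$ from Proposition \ref{thm:partition}, a routine computation then yields
\[
|\det V_{S}^{\mathrm{top}}|\gg_{\mc{M}}\left(\varepsilon Q^{\mc{N}}\right)^{(s_{0}-\mc{M}-\mc{N})/(\mc{M}+\mc{N})}\prod_{j=h_{0}+1}^{\mc{N}}Q_{j}.
\]

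To choose $T$, I would exploit the unimodularity of $R$: its first $\underline{s}-s_{0}$ rows are linearly independent in $\mb{Z}^{\mc{N}-h_{0}}$, so some $T'\subseteq\{h_{0}+1,\dotsc,\mc{N}\}$ of size $\underline{s}-s_{0}$ indexes a non-singular integer submatrix whose absolute determinant is at least $1$. Setting $T:=\mc{M}+T'$ and factoring the scalings out of $V_{S}^{\mathrm{bot}}$ delivers $|\det V_{S}^{\mathrm{bot}}|\geq(\varepsilon Q^{\mc{N}})^{(\underline{s}-s_{0})/(\mc{M}+\mc{N})}/\prod_{j\in T'}Q_{j}$. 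Multiplying the two block estimates and invoking $\prod_{j=h_{0}+1}^{\mc{N}}Q_{j}/\prod_{j\in T'}Q_{j}\geq 1$ (because $T'\subseteq\{h_{0}+1,\dotsc,\mc{N}\}$ and every $Q_{j}\geq 1$) produces the desired lower bound on $|\det V_{S}|$, and hence on $\lambda_{1}\dotsm\lambda_{\underline{s}}$. In my view the main obstacle is precisely this \emph{unimodularity transfer}: carefully passing through the maps $\bs{A}_{\bs{L}},\hat{\varphi}_{k},\omega_{1},\omega_{2}$ to pin down the correct integer basis, then leveraging its block-upper-triangular shape to force $|\det R|=1$ and extract a non-singular $(\underline{s}-s_{0})\times(\underline{s}-s_{0})$ sub-matrix of $R$. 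Once this is in place, everything else reduces to mechanical bookkeeping with the scaling identity for $\theta$.
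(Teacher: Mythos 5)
Your argument is correct, and it reaches the same key lower bound $\lambda_{1}\dotsm\lambda_{\underline{s}}\gg_{\mc{M},\mc{N}}(\varepsilon Q^{\mc{N}})^{\underline{s}/(\mc{M}+\mc{N})-1}$ as the paper, but by a genuinely different route. The paper splits the product at $s_{0}$: it notes that $\bs{v}^{1},\dotsc,\bs{v}^{s_{0}}$ lie in the rank-$s_{0}$ sublattice supported on the first $s_{0}$ coordinates and invokes Minkowski's second theorem to get $\lambda_{1}\dotsm\lambda_{s_{0}}\gg\Det(\Lambda_{s_{0}})=\theta^{\mc{M}(1-(s_{0}-\mc{M})/\mc{N})}\prod_{j\leq s_{0}-\mc{M}}Q/Q_{j}$, and then bounds each remaining $\lambda_{s}$, $s_{0}<s\leq\underline{s}$, individually by the single coordinate $\theta^{-\mc{M}/\mc{N}}(Q/Q_{s-\mc{M}})|q^{s}_{s-\mc{M}}|\geq\theta^{-\mc{M}/\mc{N}}Q/Q_{s-\mc{M}}$, which is legitimate because (\ref{eq:partvi}) guarantees $q^{s}_{s-\mc{M}}$ is a non-zero integer. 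You instead bound the whole product at once by a single $\underline{s}\times\underline{s}$ minor via Hadamard/Cauchy--Binet, factor it block-triangularly, and use the unimodularity of the integer coordinate matrix $(\bs{p}^{s},\bs{q}^{s})_{s}$ (correctly transferred through the unimodular $\bs{A}_{\bs{L}}$ and the diagonal scalings) both to evaluate the top block exactly --- recovering precisely $\Det(\Lambda_{s_{0}})$ up to the harmless $e^{-\mc{M}c}$ --- and to certify a unit-determinant $(\underline{s}-s_{0})\times(\underline{s}-s_{0})$ submatrix for the bottom block. Your version dispenses with Minkowski's second theorem and with (\ref{eq:partvi}) for the rows $s>s_{0}$, at the cost of the Cauchy--Binet and block-determinant bookkeeping; the paper's version is shorter given the geometry-of-numbers machinery already deployed elsewhere, while yours is more self-contained and makes the exact value of the sublattice covolume transparent. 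The final volume comparison is the same in both: your inequality $\prod_{j>h_{0}}Q_{j}/\prod_{j\in T'}Q_{j}\geq1$ plays the role of the paper's $\prod_{j=\underline{s}+1-\mc{M}}^{\mc{N}}Q_{j}\geq1$.
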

\noindent The proof of Lemma \ref{lem:quickcase} can be found in Section \ref{sec:slowcase}.

Combining Lemmata \ref{lem:sleqM}, \ref{lem:quickcase}, and \ref{lem:slowcase}, we finally obtain a proof of Proposition \ref{prop:firstminima}. 

\section{Proof of Lemma \ref{lem:quickcase}}
\label{sec:quickcase}

This section is entirely devoted to proving Lemma \ref{lem:quickcase}. In the proof, we distinguish two cases. Case 1:
\begin{itemize}
\item for all $1\leq s\leq\underline{s}$ and all $1\leq j\leq\mc{N}$ it holds $\left|q^{s}_{j}\right|\leq Q_{j}$.
\end{itemize}
Let $1\leq h_{s}\leq \mc{N}$ be the largest index such that $q^{s}_{h_{s}}\neq 0$. By (\ref{eq:length}), we have that
\begin{align}
\label{eq:lengthlem3}
\left|\bs{v}^{s}\right|_{2} & \geq\Bigg(\theta^{2} e^{2a^{k}_{1}}\left\|L_{1}\bs{q}^{s}\right\|^{2}+\dotsb+\theta^{2} e^{2a^{k}_{\mc{M}}}\left\|L_{\mc{M}}\bs{q}^{s}\right\|^{2}+\nonumber \\
 & \hspace{5cm}+\theta^{-\frac{2\mc{M}}{\mc{N}}}\frac{Q^{2}}{Q_{1}^{2}}\left(q^{s}_{1}\right)^{2}+\dotsb+\frac{Q^{2}}{Q^{2}_{h_{s}}}\left(q^{s}_{h_{s}}\right)^{2}\Bigg)^{\frac{1}{2}},
\end{align} 
where $s$ ranges from $1$ to $\underline{s}$. Recall that, by (\ref{eq:partv}), the condition $q^{s}_{h_{s}}\neq 0$ implies that $q^{s}_{1},\dotsc,q^{s}_{h_{s}}\neq 0$ for all $s=1,\dotsc,\underline{s}$. Moreover, by (\ref{eq:partvi}) and the hypothesis, we have that $h_{s}\geq\max\{\mc{M}+1,s+1\}$.

To bound below the right-hand side of (\ref{eq:lengthlem3}), for each $s=1,\dotsc,\underline{s}$ we use a weighted arithmetic-geometric mean inequality. If the condition $\{1,\dotsc,\underline{s}+1\}\subset\mathfrak{I}(\bs{v}^{s})$ were true for $s=1,\dotsc,\underline{s}$, by applying the standard arithmetic-geometric mean inequality at each level $s$, the proof would be concluded. This is, roughly speaking, what we did in Lemma \ref{lem:sleqM}. Indeed, by construction, the condition $\{1,\dotsc,\underline{s}+1\}\subset\mathfrak{I}(\bs{v}^{s})$ for $s=1,\dotsc,\underline{s}$ is equivalent to the condition $\{1,\dotsc,\underline{s}+1\}\subset\mathfrak{I}(\bs{v}^{1})$. However, it is not always true that $\{1,\dotsc,\underline{s}+1\}\subset\mathfrak{I}(\bs{v}^{s})$ for all $s=1,\dotsc,\underline{s}$. To overcome this problem, we observe that if for some indices $\mc{M}+1\leq s<\underline{s}$ and $\mc{M}+1< l\leq\underline{s}+1$ the term $v^{s}_{l}$ is null, then, by the fact that $\{1,\dotsc,s+1\}\subset\mf{I}(\bs{v}^{s})$, there must be an index $s<s(l)\leq\underline{s}$ such that the $l$-th entry of the vector $\bs{v}^{s(l)}$ is non-null. The smallest such index $s(l)$ is $s(l)=l-1$. When applying a weighted arithmetic-geometric mean inequality at the level $l-1$, we will therefore assign a heavier weight to the term $v^{l-1}_{l}$, to compensate the (potential) absence of the $l$-th entry in the vectors $\bs{v}^{s}$ for $s<l-1$. By using this trick, we are "pretending" that $\{1,\dotsc,\underline{s}+1\}\subset\mathfrak{I}(\bs{v}^{s})$ for all $s=1,\dotsc,\underline{s}$.

Let us define the weights\footnote{If the real numbers $x_{1},\dotsc,x_{s+1}>0$ are assigned real weights $w_{1},\dotsc,w_{s+1}>0$, we set the sum of the weights to be $k:=w_{1}+\dotsb+w_{s+1}$, and we call weighted arithmetic-geometric mean inequality with weights $w_{1},\dotsc,w_{s+1}$ the following inequality:
$$k_{s}^{-1}(x_{1}+\dotsb+x_{s+1})\geq\left(x_{1}^{w_{1}}\dotsm x_{s+1}^{w_{s+1}}\right)^{1/k}.$$} more precisely. For $s\leq\mc{M}$ we assign weight $1$ to each term at the right-hand side of (\ref{eq:lengthlem3}). For $s\geq\mc{M}+1$ we assign weight $1$ to each term at the right-hand side of (\ref{eq:lengthlem3}) except for the term $v_{s+1}^{s}=\theta^{-\mc{M}/\mc{N}}Q/Q_{s+1-\mc{M}}\left|q^{s}_{s+1-\mc{M}}\right|$, to which we assign weight $1+(k_{s}-\mc{M}-h_{s})$, where $k_{s}$ is a rational number that still needs to be defined. Note that, with this definition, for each index $\mc{M}+1\leq s\leq\underline{s}$, the number $k_{s}$ denotes precisely the sum of all weights assigned at the level $s$. Now, we proceed to define the weight sums $k_{s}$ for all $s$. For $s=1,\dotsc,\mc{M}$ we set (according to our choice of weights)
\begin{equation}
\label{eq:exponentbasecase}
k_{s}:=\mc{M}+h_{s}.
\end{equation}
For $s\geq \mc{M}+1$, we define $k_{s}$ by recursion. Let $\bs{t}$ be the $(\mc{M}+\mc{N}-1)\times \mc{N}$ matrix whose entries are defined by
$$t_{sj}:=
\begin{cases}
1 & \quad\text{if }q^{s}_{j}\neq 0 \\
0 & \quad\text{if }q^{s}_{j}=0
\end{cases}.$$
In line with what we explained above, we require that the number $k_{s}$ satisfy the equation
\begin{equation}
\label{eq:fillingup}
\underbrace{\frac{1-t_{1(s+1-\mc{M})}}{k_{1}}+\dotsb+\frac{1-t_{(s-1)(s+1-\mc{M})}}{k_{s-1}}}_{\mbox{extra weight}}+\underbrace{\frac{1}{k_{s}}}_{\mbox{single weight}}=\underbrace{\frac{1+(k_{s}-\mc{M}-h_{s})}{k_{s}}}_{\mbox{total weight}}
\end{equation}
for $s=\mc{M}+1,\dotsc,\underline{s}$. Here, the left-hand side counts (with weights) how many times the $(s+1)$-th entry is null in the vectors $\bs{v}^{1},\dotsc,\bs{v}^{s-1}$ (extra weight) and adds this to the single weight (already divided by the weight sum $k_{s}$) of the term $v^{s}_{s+1}$ in the vector $\bs{v}^{s}$. The right-hand side represents the total weight assigned to the term $v^{s}_{s+1}$ at the level $s$ divided by the weight sum $k_{s}$. Imposing (\ref{eq:fillingup}) for a specific value of the index $s$ allows us to balance out the presence of the terms $Q/Q_{s+1-\mc{M}}$ in the product $\left|\bs{v}^{1}\right|_{2}\dotsm\left|\bs{v}^{\underline{s}}\right|_{2}$. This is crucial to obtain an upper bound only depending on the function $Q$ and not on the function $Q_{\max}$ in Proposition \ref{prop:firstminima} (see the Introduction for a rough explanation of why this condition is important).

From (\ref{eq:fillingup}) we deduce that
\begin{equation}
\label{eq:from40}
k_{s}:=(\mc{M}+h_{s})\left(1-\left(\frac{1-t_{1(s+1-\mc{M})}}{k_{1}}+\dotsb+\frac{1-t_{(s-1)(s+1-\mc{M})}}{k_{s-1}}\right)\right)^{-1}
\end{equation}
for $s=\mc{M}+1,\dotsc,\underline{s}$. Therefore, in order to show that the number $k_{s}$ is well defined, we have to prove that
\begin{equation}
1-\left(\frac{1-t_{1(s+1-\mc{M})}}{k_{1}}+\dotsb+\frac{1-t_{(s-1)(s+1-\mc{M})}}{k_{s-1}}\right)>0\nonumber
\end{equation}
for $s=\mc{M}+1,\dotsc,\underline{s}$. This always holds true, thanks to the following result.
\begin{lem}
\label{lem:exponentswelldef}
Let $k_{s}>0$ be real numbers such that (\ref{eq:exponentbasecase}) and (\ref{eq:fillingup}) hold for $s=1,\dotsc,\mc{M}+\mc{N}-1$. Let also
$$\alpha_{s}:=\frac{1}{k_{1}}+\dotsb+\frac{1}{k_{s}}$$
and
$$\alpha_{sj}:=\frac{t_{1j}}{k_{1}}+\dotsb+\frac{t_{sj}}{k_{s}}$$
for $s=1,\dotsc,\mc{M}+\mc{N}-1$ and $j=1,\dots,\mc{N}$. Then, under the hypotheses of Lemma \ref{lem:quickcase} and provided (\ref{eq:partv}) and (\ref{eq:partvi}) hold, we have that\vspace{2mm}
\begin{itemize}
\item[$i)$] $k_{s}\geq\mc{M}+h_{s}$ for $s=1,\dotsc,\mc{M}+\mc{N}-1$;\vspace{2mm}
\item[$ii)$] $\alpha_{s}(s+1)+\sum_{j>s+1-\mc{M}}\alpha_{sj}=s$ for $s=\mc{M},\dotsc,\mc{M}+\mc{N}-1$, whence $\alpha_{s}\leq s/(s+1)$.\vspace{2mm}
\end{itemize}
If $s+1-\mc{M}=\mc{N}$, the sum $\sum_{j>s+1-\mc{M}}\alpha_{sj}$ in part $ii)$ should be disregarded.
\end{lem}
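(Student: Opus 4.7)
The plan is to prove parts (i) and (ii) simultaneously by induction on $s$, since the two assertions are tightly intertwined: part (i) at $s$ only makes sense once we know the denominator in (\ref{eq:from40}) is positive, and that positivity will follow from part (ii) at $s-1$, while part (ii) at $s$ will in turn hinge on the defining relation (\ref{eq:fillingup}) applied at $s$.

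Before starting the induction I would extract two combinatorial consequences of the structural properties already established. First, (\ref{eq:partiv}) and (\ref{eq:partv}) together force the non-zero positions of each $\bs{q}^r$ to form an initial segment $\{1,\dots,h_r\}$ with $h_r \geq 1$, so that $t_{rj}=1$ iff $j \leq h_r$; in particular $\sum_{j \geq 2} t_{rj} = h_r - 1$. Second, the hypothesis of Lemma \ref{lem:quickcase} forces $h_s \geq s+1-\mc{M}$ for every $s \geq \mc{M}$, so $\sum_{j > s+1-\mc{M}} t_{sj} = h_s - (s+1-\mc{M})$. These identities are what will let one collapse the various sums into clean multiples of $1/k_r$.

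The base case is $s=\mc{M}$ for part (ii), since part (i) for $s \leq \mc{M}$ holds with equality by (\ref{eq:exponentbasecase}). Here I would simply write
$$\alpha_{\mc{M}}(\mc{M}+1) + \sum_{j=2}^{\mc{N}}\alpha_{\mc{M},j} = \sum_{r=1}^{\mc{M}} \frac{(\mc{M}+1)+\sum_{j\geq 2}t_{rj}}{k_r} = \sum_{r=1}^{\mc{M}} \frac{\mc{M}+h_r}{k_r} = \mc{M},$$
using the preparation together with $k_r=\mc{M}+h_r$. For the inductive step at $s \geq \mc{M}+1$: part (ii) at $s-1$ gives $\alpha_{s-1} \leq (s-1)/s < 1$, so the denominator $1 - \sum_{r<s}(1-t_{r(s+1-\mc{M})})/k_r$ of (\ref{eq:from40}) is strictly positive and at most $1$, which yields $k_s \geq \mc{M}+h_s$ and proves (i) at $s$. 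For (ii) at $s$, I would split $\alpha_s = \alpha_{s-1} + 1/k_s$ and $\alpha_{sj} = \alpha_{(s-1)j} + t_{sj}/k_s$, rearrange the left-hand side as the identity from (ii) at $s-1$ plus the telescopic correction $\alpha_{s-1} - \alpha_{(s-1)(s+1-\mc{M})}$ plus a $1/k_s$-remainder, rewrite the telescopic correction via (\ref{eq:fillingup}) as $1 - (\mc{M}+h_s)/k_s$, and finally use $\sum_{j>s+1-\mc{M}}t_{sj}=h_s-(s+1-\mc{M})$ to see that all the $1/k_s$ contributions cancel, leaving precisely $s$.

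The main obstacle is the bookkeeping in the inductive step for (ii): one must recognise that the "extra weight" side of (\ref{eq:fillingup}), which was engineered in the proof of Lemma \ref{lem:quickcase} to balance out the $Q/Q_{s+1-\mc{M}}$ factors in the product $|\bs{v}^1|_2\dotsm|\bs{v}^{\underline{s}}|_2$, coincides exactly with the telescopic correction produced when shifting the index from $s-1$ to $s$ in $\alpha_{s-1}(s+1) + \sum_{j > s+1-\mc{M}}\alpha_{(s-1)j}$. Some attention is also needed in the boundary case $s+1-\mc{M}=\mc{N}$, where the $j$-sum is empty and the convention flagged at the end of the lemma statement must be invoked.
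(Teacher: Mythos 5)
Your proposal is correct and follows essentially the same route as the paper: a simultaneous induction on $s$ with base case $s=\mc{M}$ computed exactly as in (\ref{eq:lem3.71})--(\ref{eq:lem3.72}), positivity of the denominator in (\ref{eq:from40}) deduced from part $ii)$ at $s-1$, and the inductive step for $ii)$ resolved by rewriting (\ref{eq:fillingup}) as $\alpha_{s-1}-\alpha_{(s-1)(s+1-\mc{M})}=1-(\mc{M}+h_{s})/k_{s}$ and cancelling the $1/k_{s}$ contributions via $\sum_{j>s+1-\mc{M}}t_{sj}=h_{s}-(s+1-\mc{M})$. The only cosmetic difference is that you extract the combinatorial identities on the $t_{sj}$ up front rather than inline.
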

\noindent We prove Lemma \ref{lem:exponentswelldef} in Section \ref{sec:exponentwelldef}.

As explained above, for $s=1,\dotsc,\underline{s}$ we apply a weighted arithmetic-geometric mean inequality to the right-hand side of (\ref{eq:lengthlem3}), assigning weight $1+(k_{s}-\mc{M}-h_{s})$ to the term $v^{s}_{s+1}=\theta^{-\mc{M}/\mc{N}}Q/Q_{s+1-\mc{M}}\left|q^{s}_{s+1-\mc{M}}\right|$ and weight $1$ to all other terms in the sum. In view of Proposition \ref{thm:partition} part $iia)$, by doing so we obtain
\begin{equation}
\left|\bs{v}^{s}\right|_{2}\gg_{\mc{M},\mc{N}}\left(\theta^{\mc{M}\left(1-\frac{k_{s}-\mc{M}}{\mc{N}}\right)}\prod_{i=1}^{\mc{M}}\left\|L_{i}\left(\bs{q}^{s}\right)\right\|\cdot
\prod_{j=1}^{h_{s}}\frac{Q}{Q_{j}}\left|q^{s}_{j}\right|\cdot\left(\frac{Q}{Q_{s+1-\mc{M}}}\left|q^{s}_{s+1-\mc{M}}\right|\right)^{k_{s}-\mc{M}-h_{s}}\right)^{\frac{1}{k_{s}}}\nonumber
\end{equation}
for $s=1,\dotsc,\underline{s}$. Now, since the matrix $\bs{L}$ is multiplicatively badly approximable, we have that
$$\prod_{i=1}^{\mc{M}}\left\|L_{i}\left(\bs{q}^{s}\right)\right\|\geq\frac{\phi\left(\left(\prod_{j=1}^{\mc{N}}\max\left\{1,\left|q^{s}_{j}\right|\right\}\right)^{\frac{1}{\mc{N}}}\right)}{\prod_{j=1}^{\mc{N}}\max\left\{1,\left|q^{s}_{j}\right|\right\}}\geq\frac{\phi\left(Q\right)}{\prod_{j=1}^{h_{s}}\left|q^{s}_{j}\right|}.$$
for all $s=1,\dots,\underline{s}$. Hence, we deduce that
\begin{equation}
\label{eq:lengthlem4'}
\left|\bs{v}^{s}\right|_{2}\gg_{\mc{M},\mc{N}}\left(\theta^{\mc{M}\left(1-\frac{k_{s}-\mc{M}}{\mc{N}}\right)}\cdot\phi\left(Q\right)\cdot\prod_{j=1}^{h_{s}}\frac{Q}{Q_{j}}\cdot\left(\frac{Q}{Q_{s+1-\mc{M}}}\right)^{k_{s}-\mc{M}-h_{s}}\right)^{\frac{1}{k_{s}}}
\end{equation}
for $s=1,\dotsc,\underline{s}$. We are then left to estimate the product $\left|\bs{v}^{1}\right|_{2}\dotsm\left|\bs{v}^{\underline{s}}\right|_{2}$. We do this by multiplying together the inequalities in (\ref{eq:lengthlem4'}). For simplicity, we compute the exponent of each factor (e.g., $\theta$, $\phi(Q)$, etc.) separately. We use the notation introduced in Lemma \ref{lem:exponentswelldef}. For the constant $\theta$ the exponent in the product of the inequalities in (\ref{eq:lengthlem4'}) is given by
\begin{equation}
\frac{\mc{M}}{k_{1}}\left(1-\frac{k_{1}-\mc{M}}{\mc{N}}\right)+\dotsb+\frac{\mc{M}}{k_{\underline{s}}}\left(1-\frac{k_{\underline{s}}-\mc{M}}{\mc{N}}\right)=\frac{\mc{M}(\mc{M}+\mc{N})}{\mc{N}}\alpha_{\underline{s}}-\frac{\mc{M}}{\mc{N}}\underline{s}.\nonumber
\end{equation}
For the function $\phi\left(Q\right)$ the exponent in the product of the inequalities in (\ref{eq:lengthlem4'}) is given by $\alpha_{\underline{s}}$. If $1\leq j\leq\underline{s}+1-\mc{M}$, by equation (\ref{eq:fillingup}), the ratios $Q/Q_{j}$ must all have the same exponent in the product of the inequalities in (\ref{eq:lengthlem4'}), i.e., $\alpha_{\underline{s}}$ (the computation is obvious for, e.g., $Q/Q_{1}$). On the other hand, if $j\geq\underline{s}+2-\mc{M}$ the exponent of the ratios $Q/Q_{j}$ is given by $\alpha_{\underline{s}j}$. Combining these considerations, we obtain that
\begin{equation}
\label{eq:lengthlem5}
\left|\bs{v}^{1}\right|_{2}\dotsm\left|\bs{v}^{\underline{s}}\right|_{2}\gg_{\mc{M},\mc{N}} \theta^{\frac{\mc{M}(\mc{M}+\mc{N})}{\mc{N}}\alpha_{\underline{s}}-\frac{\mc{M}}{\mc{N}}\underline{s}}\cdot\phi\left(Q\right)^{\alpha_{\underline{s}}}\cdot
\left(\prod_{j=1}^{\underline{s}+1-\mc{M}}\frac{Q}{Q_{j}}\right)^{\alpha_{\underline{s}}}\prod_{j=\underline{s}+2-\mc{M}}^{\mc{N}}\left(\frac{Q}{Q_{j}}\right)^{\alpha_{\underline{s}j}}.
\end{equation}
Now, we observe that
\begin{equation}
\label{eq:finaltheta}
\theta^{\frac{\mc{M}(\mc{M}+\mc{N})}{\mc{N}}\alpha_{\underline{s}}-\frac{\mc{M}}{\mc{N}}\underline{s}}=\left(\varepsilon^{-\frac{\mc{N}}{\mc{M}(\mc{M}+\mc{N})}}Q^{\frac{\mc{N}}{\mc{M}+\mc{N}}}\right)^{\frac{\mc{M}(\mc{M}+\mc{N})}{\mc{N}}\alpha_{\underline{s}}-\frac{\mc{M}}{\mc{N}}\underline{s}}=\varepsilon^{-\alpha_{\underline{s}}+\frac{\underline{s}}{\mc{M}+\mc{N}}}Q^{\mc{M}\alpha_{\underline{s}}-\frac{\mc{M}}{\mc{M}+\mc{N}}\underline{s}}.
\end{equation}
Moreover, by using the fact that $Q$ is the geometric mean of the parameters $Q_{1},\dotsc,Q_{\mc{N}}$, we have that
\begin{multline}
\label{eq:expratios3}
\left(\prod_{j=1}^{\underline{s}+1-\mc{M}}\frac{Q}{Q_{j}}\right)^{\alpha_{\underline{s}}}\prod_{j=\underline{s}+2-\mc{M}}^{\mc{N}}\left(\frac{Q}{Q_{j}}\right)^{\alpha_{\underline{s}j}}= \\
=\prod_{j=\underline{s}+2-\mc{M}}^{\mc{N}}\left(\frac{Q}{Q_{j}}\right)^{\alpha_{\underline{s}j}-\alpha_{\underline{s}}}\geq Q^{-\alpha_{\underline{s}}(\mc{N}+\mc{M}-\underline{s}-1)+\sum_{j=\underline{s}+2-\mc{M}}^{\mc{N}}\alpha_{\underline{s}j}},
\end{multline}
where the lower bound is obtained by trivially setting $Q_{j}=1$ for $\underline{s}+2-\mc{M}\leq j\leq \mc{N}$. By Lemma \ref{lem:exponentswelldef}, it holds
$$\alpha_{\underline{s}}(\underline{s}+1)+\sum_{j=\underline{s}+2-\mc{M}}^{\mc{N}}\alpha_{\underline{s}j}=\underline{s},$$
therefore, Equation (\ref{eq:expratios3}) implies that
\begin{equation}
\label{eq:nnew}
\left(\prod_{j=1}^{\underline{s}+1-\mc{M}}\frac{Q}{Q_{j}}\right)^{\alpha_{\underline{s}}}\prod_{j=\underline{s}+2-\mc{M}}^{\mc{N}}\left(\frac{Q}{Q_{j}}\right)^{\alpha_{\underline{s}j}}\geq Q^{-\alpha_{\underline{s}}(\mc{N}+\mc{M})+\underline{s}}.
\end{equation}
Combining (\ref{eq:finaltheta}) and (\ref{eq:nnew}) with (\ref{eq:lengthlem5}), we finally obtain that
$$\left|\bs{v}^{1}\right|_{2}\dotsm\left|\bs{v}^{\underline{s}}\right|_{2}\gg_{\mc{M},\mc{N}}\varepsilon^{-\alpha_{\underline{s}}+\frac{\underline{s}}{\mc{M}+\mc{N}}}Q^{\mc{M}\alpha_{\underline{s}}-\frac{\mc{M}}{\mc{M}+\mc{N}}\underline{s}-\alpha_{\underline{s}}(\mc{N}+\mc{M})+\underline{s}}\phi(Q)^{\alpha_{\underline{s}}}=\left(\varepsilon Q^{\mc{N}}\right)^{-\alpha_{\underline{s}}+\frac{\underline{s}}{\mc{M}+\mc{N}}}\phi(Q)^{\alpha_{\underline{s}}}.$$
This, in turn, yields
$$\frac{\left(\varepsilon Q^{\mc{N}}\right)^{\frac{\underline{s}}{\mc{M}+\mc{N}}}}{\lambda_{1}\dotsm\lambda_{\underline{s}}}\ll_{\mc{M},\mc{N}}\left(\frac{\varepsilon Q^{\mc{N}}}{\phi\left(Q\right)}\right)^{\alpha_{\underline{s}}}\ll_{\mc{M},\mc{N}}1+\left(\frac{\varepsilon Q^{\mc{N}}}{\phi\left(Q\right)}\right)^{\frac{\underline{s}}{\underline{s}+1}},$$
where the last inequality is due to Lemma \ref{lem:exponentswelldef} part $ii)$. This completes the proof in case $1$.

Case 2:
\begin{itemize}
\item there exist some indices $1\leq s_{0}\leq\underline{s}$ and $1\leq j_{0}\leq \mc{N}$ such that $\left|q^{s_{0}}_{j_{0}}\right|> Q_{j_{0}}$.
\end{itemize}
Suppose that $s_{0}$ is the least index such that $\left|q^{s_{0}}_{j_{0}}\right|> Q_{j_{0}}$ for some $1\leq j_{0}\leq \mc{N}$. Then, by ignoring all the terms but $\theta^{-2\mc{M}/\mc{N}}\left(Q/Q_{j_{0}}\right)^{2}(q^{s_{0}}_{j_{0}})^{2}$ in (\ref{eq:lengthlem3}), we find that
$$\lambda_{s}\gg_{\mc{M},\mc{N}}\theta^{-\frac{\mc{M}}{\mc{N}}}Q=\varepsilon^{\frac{1}{\mc{M}+\mc{N}}}Q^{\frac{\mc{N}}{\mc{M}+\mc{N}}}$$
for all $s_{0}\leq s\leq\underline{s}$. Hence, we can write
$$\frac{\left(\varepsilon Q^{\mc{N}}\right)^{\frac{\underline{s}}{\mc{M}+\mc{N}}}}{\lambda_{1}\dotsm\lambda_{\underline{s}}}\ll_{\mc{M},\mc{N}}\frac{\left(\varepsilon Q^{\mc{N}}\right)^{\frac{s_{0}-1}{\mc{M}+\mc{N}}}}{\lambda_{1}\dotsm\lambda_{s_{0}-1}}\frac{\left(\varepsilon Q^{\mc{N}}\right)^{\frac{\underline{s}-s_{0}+1}{\mc{M}+\mc{N}}}}{\left(\varepsilon Q^{\mc{N}}\right)^{\frac{\underline{s}-s_{0}+1}{\mc{M}+\mc{N}}}}=\frac{\left(\varepsilon Q^{\mc{N}}\right)^{\frac{s_{0}-1}{\mc{M}+\mc{N}}}}{\lambda_{1}\dotsm\lambda_{s_{0}-1}}.$$
To conclude the proof, it suffices to apply case 1 (if $s_{0}\geq\mc{M}+1$) or Lemma \ref{lem:sleqM} (if $s_{0}\leq\mc{M}$).

\section{Proof of Lemma \ref{lem:slowcase}}
\label{sec:slowcase}

Let $\bs{B}_{k}\in\mb{R}^{(\mc{M}+\mc{N})\times(\mc{M}+\mc{N})}$ be the matrix that represents the linear transformation $\omega_{2}\circ\omega_{1}\circ\hat{\varphi}_{k}$ in the canonical basis. Then, we have that
$$\omega_{1}\circ\omega_{2}\circ\hat{\varphi}_{k}(\Lambda_{\bs{L}})=\bs{B}_{k}\bs{A}_{\bs{L}}\mb{Z}^{\mc{M}+\mc{N}}.$$
Let $\left(\bs{B}_{k}\bs{A}_{\bs{L}}\right)_{s_{0}}$ be the $s_{0}\times s_{0}$ submatrix of $\bs{B}_{k}\bs{A}_{\bs{L}}$ formed by the first $s_{0}$ rows and the first $s_{0}$ columns. Let also
$$ \bs{C}_{k}:=\left(\begin{array}{@{}c|c@{}}
   \left(\bs{B}_{k}\bs{A}_{\bs{L}}\right)_{s_{0_{\phantom{a}}}}\!\!\! & \bs{0} \\\hline
   \accentset{\phantom{A}}{\bs{0}} & \accentset{\phantom{A}}{\bs{0}}
  \end{array}\right)\in\mb{R}^{(\mc{M}+\mc{N})\times(\mc{M}+\mc{N})},$$
and
$$\Lambda_{s_{0}}:=\bs{C}_{k}\mb{Z}^{\mc{M}+\mc{N}}.$$
The lattice $\Lambda_{s_{0}}$ has rank $s_{0}$, and its projection onto the first $s_{0}$ coordinates has covolume
\begin{equation}
\label{eq:slowcase0}
\Det\left(\Lambda_{s_{0}}\right)=\theta^{\mc{M}\left(1-\frac{s_{0}-\mc{M}}{\mc{N}}\right)}\prod_{j=1}^{s_{0}-\mc{M}}\frac{Q}{Q_{j}}.
\end{equation}
By the hypothesis, we also have that $\bs{v}_{1},\dotsc,\bs{v}_{s_{0}}\in\Lambda_{s_{0}}.$
Since the vectors $\bs{v}_{1},\dotsc,\bs{v}_{s_{0}}$ are linearly independent, by Minkowsky's Theorem (see \cite[Chapter VIII, Theorem I]{Cassels:AnIntrotoGeomofNumb}), we deduce that
\begin{equation}
\label{eq:slowcase1}
\lambda_{1}\dotsm\lambda_{s_{0}}\gg_{\mc{M},\mc{N}}\prod_{s=1}^{s_{0}}\left|\bs{v}^{s}\right|_{2}\geq\lambda_{1}(\Lambda_{s_{0}})\dotsm\lambda_{s_{0}}(\Lambda_{s_{0}})\gg_{\mc{M},\mc{N}}\Det\left(\Lambda_{s_{0}}\right),
\end{equation}
where $\lambda_{s}(\Lambda_{s_{0}})$ ($s=1,\dotsc,s_{0}$) are the successive minima of the projection of the lattice $\Lambda_{s_{0}}$ onto the first $s_{0}$ coordinates. This gives us an estimate of the product $\lambda_{1}\dotsm\lambda_{s_{0}}$.

We are left to estimate the product $\lambda_{s_{0}+1}\dotsm\lambda_{\underline{s}}$. By (\ref{eq:partvi}), we have that $\{1,\dotsc,s\}\subseteq \mathfrak{I}\left(\bs{v}^{s}\right)$ for $s=\mc{M}+1,\dotsc,\mc{M}+\mc{N}$. Hence, by using (\ref{eq:length}), we can write
\begin{equation}
\label{eq:slowcase3}
\lambda_{s}\gg_{\mc{M},\mc{N}}|\bs{v}^{s}|_{2}\geq\theta^{-\frac{\mc{M}}{\mc{N}}}\frac{Q}{Q_{s-\mc{M}}}\left|q^{s}_{s-\mc{M}}\right|\geq\theta^{-\frac{\mc{M}}{\mc{N}}}\frac{Q}{Q_{s-\mc{M}}}
\end{equation}
for $s=s_{0}+1,\dotsc,\underline{s}$, where we ignored all the terms but $\theta^{-\frac{\mc{M}}{\mc{N}}}\frac{Q}{Q_{s-\mc{M}}}\left|q^{s}_{s-\mc{M}}\right|$. Then, by (\ref{eq:slowcase0}), (\ref{eq:slowcase1}), and (\ref{eq:slowcase3}), we deduce that
\begin{align}
\lambda_{1}\dotsm\lambda_{\underline{s}} & \gg_{\mc{M},\mc{N}}\Det\left(\Lambda_{s_{0}}\right)\prod_{s=s_{0}+1}^{\underline{s}}\theta^{-\frac{\mc{M}}{\mc{N}}}\frac{Q}{Q_{s-\mc{M}}}=\nonumber \\
 & =\theta^{\mc{M}\left(1-\frac{s_{0}-\mc{M}}{\mc{N}}\right)}\prod_{j=1}^{s_{0}-\mc{M}}\frac{Q}{Q_{j}}\prod_{j=s_{0}-\mc{M}+1}^{\underline{s}-\mc{M}}\theta^{-\frac{\mc{M}}{\mc{N}}}\frac{Q}{Q_{j}}=\nonumber \\
 & =\theta^{\mc{M}\left(1-\frac{\underline{s}-\mc{M}}{\mc{N}}\right)}\frac{1}{Q^{\mc{M}+\mc{N}-\underline{s}}}\prod_{j=\underline{s}+1-\mc{M}}^{\mc{N}}Q_{j}\geq\nonumber \\
 & \geq\theta^{\mc{M}\left(1-\frac{\underline{s}-\mc{M}}{\mc{N}}\right)}Q^{\underline{s}-(\mc{M}+\mc{N})}=\left(\varepsilon Q^{\mc{N}}\right)^{\frac{\underline{s}}{\mc{M}+\mc{N}}-1},\nonumber
\end{align}
where $\prod_{s=s_{0}+1}^{\underline{s}}\theta^{-\frac{\mc{M}}{\mc{N}}}Q/Q_{s-\mc{M}}=1$ if $s_{0}=\underline{s}$.
Hence, we find that
\begin{equation}
\frac{\left(\varepsilon Q^{\mc{N}}\right)^{\frac{\underline{s}}{\mc{M}+\mc{N}}}}{\lambda_{1}\dotsm\lambda_{\underline{s}}}\ll_{\mc{M},\mc{N}}\frac{\left(\varepsilon Q^{\mc{N}}\right)^{\frac{\underline{s}}{\mc{M}+\mc{N}}}}{\left(\varepsilon Q^{\mc{N}}\right)^{\frac{\underline{s}}{\mc{M}+\mc{N}}-1}}=\varepsilon Q^{\mc{N}}.\nonumber
\end{equation}

\section{Proof of Lemma \ref{lem:exponentswelldef}}
\label{sec:exponentwelldef}

Throughout this section, we denote (once again) by $1\leq h_{s}\leq \mc{N}$ the largest non-zero index such that $q^{s}_{h_{s}}\neq 0$ for $s=1,\dotsc,\mc{M}+\mc{N}-1$.

If $\underline{s}<\mc{M}$, by definition, we have that 
$$k_{\underline{s}}:=\mc{M}+h_{\underline{s}},$$
and part $i)$ holds true. If $\underline{s}\geq \mc{M}$, we simultaneously prove parts $i)$ and $ii)$ by recursion on $\underline{s}$. First, we let $\underline{s}=\mc{M}$. Since
$$k_{\mc{M}}:=\mc{M}+h_{\mc{M}},$$
part $i)$ holds true. Further, we observe that
\begin{equation}
\label{eq:lem3.71}
(\mc{M}+1)\sum_{s=1}^{\mc{M}}\frac{1}{\mc{M}+h_{s}}+\sum_{j=2}^{\mc{N}}\sum_{s=1}^{\mc{M}}\frac{t_{sj}}{\mc{M}+h_{s}}=\sum_{s=1}^{\mc{M}}\frac{\mc{M}+1+\sum_{j=2}^{\mc{N}}t_{sj}}{\mc{M}+h_{s}},
\end{equation}
and, since $q_{1}^{s}\neq 0$ for $s=1,\dotsc,\mc{M}$, we also have that
\begin{equation}
\label{eq:lem3.72}
1+\sum_{j=2}^{\mc{N}}t_{sj}=h_{s}.
\end{equation}
Hence, equations (\ref{eq:lem3.71}) and (\ref{eq:lem3.72}) imply part $ii)$. Now, let us fix $\mc{M}<\underline{s}\leq \mc{M}+\mc{N}-1$, and let us suppose that both parts $i)$ and $ii)$ hold for all the indices $s$ such that $\mc{M}\leq s<\underline{s}$. By (\ref{eq:from40}), we have that
\begin{equation}
k_{\underline{s}}:=(h_{\underline{s}}+\mc{M})\left(1-\left(\frac{1-t_{1(\underline{s}+1-\mc{M})}}{k_{1}}+\dotsb+\frac{1-t_{(\underline{s}-1)(\underline{s}+1-\mc{M})}}{k_{\underline{s}-1}}\right)\right)^{-1}.\nonumber
\end{equation}
Hence, to prove part $i)$ for $\underline{s}$, it suffices to show that
$$0\leq\frac{1-t_{1(\underline{s}+1-\mc{M})}}{k_{1}}+\dotsb+\frac{1-t_{(\underline{s}-1)(\underline{s}+1-\mc{M})}}{k_{\underline{s}-1}}<1.$$
Since either $t_{sj}=0$ or $t_{sj}=1$ for all $s$ and $j$, the recursive hypothesis for part $i)$ ($s<\underline{s}$) implies that
\begin{equation}
\label{eq:0}
\frac{1-t_{(\underline{s}+1-\mc{M})1}}{k_{1}}+\dotsb+\frac{1-t_{(\underline{s}-1)(\underline{s}+1-\mc{M})}}{k_{\underline{s}-1}}\geq0.
\end{equation}
To prove the other inequality, we observe that
\begin{equation}
\label{eq:0.5}
\frac{1-t_{(\underline{s}+1-\mc{M})1}}{k_{1}}+\dotsb+\frac{1-t_{(\underline{s}-1)(\underline{s}+1-\mc{M})}}{k_{\underline{s}-1}}\leq\frac{1}{k_{1}}+\dotsb+\frac{1}{k_{\underline{s}-1}}=\alpha_{\underline{s}-1}.
\end{equation}
Since the recursive hypothesis for part $ii)$ ($s=\underline{s}-1$) implies that
$$\alpha_{\underline{s}-1}\underline{s}+\sum_{j>\underline{s}-\mc{M}}\alpha_{(\underline{s}-1)j}=\underline{s}-1,$$
and, since $\alpha_{(\underline{s}-1)j}\geq 0$ for all $j$, we deduce that
\begin{equation}
\label{eq:1}
\alpha_{\underline{s}-1}\leq\frac{\underline{s}-1}{\underline{s}}<1.
\end{equation}
Hence, combining (\ref{eq:0}), (\ref{eq:0.5}), and (\ref{eq:1}), we obtain that
$$0\leq\frac{1-t_{(\underline{s}+1-\mc{M})1}}{k_{1}}+\dotsb+\frac{1-t_{(\underline{s}-1)(\underline{s}+1-\mc{M})}}{k_{\underline{s}-1}}<1.$$
We are now left to prove part $ii)$ for $s=\underline{s}$. We start by observing that
\begin{equation}
\label{eq:finallemma1}
\alpha_{\underline{s}}(\underline{s}+1)+\sum_{j>\underline{s}+1-\mc{M}}\alpha_{\underline{s}j}=\alpha_{\underline{s}-1}\underline{s}+\alpha_{\underline{s}-1}+\frac{1}{k_{\underline{s}}}(\underline{s}+1)+\sum_{j>\underline{s}+1-\mc{M}}\alpha_{(\underline{s}-1)j}+\sum_{j>\underline{s}+1-\mc{M}}\frac{t_{\underline{s}j}}{k_{\underline{s}}}.
\end{equation}
We claim that
\begin{equation}
\label{eq:finallemma2}
\alpha_{\underline{s}-1}+\frac{1}{k_{\underline{s}}}(\underline{s}+1)+\sum_{j>\underline{s}+1-\mc{M}}\frac{t_{\underline{s}j}}{k_{\underline{s}}}=1+\alpha_{(\underline{s}-1)(\underline{s}+1-\mc{M})}.
\end{equation}
This concludes the proof, since (\ref{eq:finallemma1}), (\ref{eq:finallemma2}), and the recursive hypothesis for part $ii)$ ($s=\underline{s}-1$) imply that
$$\alpha_{\underline{s}}(\underline{s}+1)+\sum_{j>\underline{s}+1-\mc{M}}\alpha_{\underline{s}j}=\alpha_{\underline{s}-1}\underline{s}+\alpha_{(\underline{s}-1)(\underline{s}+1-\mc{M})}+\sum_{j>\underline{s}+1-\mc{M}}\alpha_{(\underline{s}-1)j}+1=\underline{s}-1+1.$$

Now, we prove (\ref{eq:finallemma2}). By assumption, $t_{\underline{s}j}=1$ for $1\leq j\leq\underline{s}+1-\mc{M}$, hence, we have that
$$\frac{1}{k_{\underline{s}}}(\underline{s}+1)=\frac{1}{k_{\underline{s}}}\left(\mc{M}+\sum_{j=1}^{\underline{s}+1-\mc{M}}t_{\underline{s}j}\right)$$
whence
\begin{equation}
\label{eq:explanation0}
\alpha_{\underline{s}-1}+\frac{1}{k_{\underline{s}}}(\underline{s}+1)+\sum_{j>\underline{s}+1-\mc{M}}\frac{t_{\underline{s}j}}{k_{\underline{s}}}=\alpha_{\underline{s}-1}+\frac{1}{k_{\underline{s}}}\left(\mc{M}+\sum_{j=1}^{\mc{N}}t_{\underline{s}j}\right)=\alpha_{\underline{s}-1}+\frac{\mc{M}+h_{\underline{s}}}{k_{\underline{s}}}.
\end{equation}
Further, we observe that (\ref{eq:fillingup}) for $s=\underline{s}$ can be rewritten as
\begin{equation}
\label{eq:explanation}
\alpha_{\underline{s}-1}-\alpha_{(\underline{s}-1)(\underline{s}+1-\mc{M})}=1-\frac{\mc{M}+h_{\underline{s}}}{k_{\underline{s}}}.
\end{equation}
Thus, combining (\ref{eq:explanation0}) and (\ref{eq:explanation}), we obtain that
$$\alpha_{\underline{s}-1}+\frac{1}{k_{\underline{s}}}(\underline{s}+1)+\sum_{j>\underline{s}+1-\mc{M}}\frac{t_{\underline{s}j}}{k_{\underline{s}}}=1+\alpha_{(\underline{s}-1)(\underline{s}+1-\mc{M})},$$
which proves (\ref{eq:finallemma2}).

\section{Proof of Theorem \ref{thm:cor2}}
\label{sec:proofofthm}

We start by noticing that
\begin{align}
 & \sum_{\substack{\bs{q}\in\prod_{j=1}^{\mc{N}}[-Q_{j},Q_{j}]\\ \cap\ \mb{Z}^{\mc{N}}\setminus\{\bs{0}\}}}\prod_{i=1}^{\mc{M}}\|L_{i}\bs{q}\|^{-1}\nonumber \\
 & \leq\sum_{k=0}^{\infty}2^{k+1}\#\left\{\bs{q}\in\prod_{j=1}^{\mc{N}}[-Q_{j},Q_{j}]\cap\mb{Z}^{\mc{N}}\setminus\{\bs{0}\}:2^{-k-1}\leq\prod_{i=1}^{\mc{M}}\|L_{i}\bs{q}\|<2^{-k}\right\} \nonumber \\
 & \leq\sum_{k=0}^{\infty}2^{k+1}\#\left\{\bs{q}\in\prod_{j=1}^{\mc{N}}[-Q_{j},Q_{j}]\cap\mb{Z}^{\mc{N}}\setminus\{\bs{0}\}:\prod_{i=1}^{\mc{M}}\|L_{i}\bs{q}\|<2^{-k}\right\}.\nonumber
\end{align}

From this, (\ref{eq:intersection}), and Lemma \ref{lem:emptycase} with $\varepsilon=2^{-k}$, we deduce that

\begin{align}
 \label{eq:cor2eq1} 
 \sum_{\substack{\bs{q}\in\prod_{j=1}^{\mc{N}}[-Q_{j},Q_{j}]\\ \cap\ \mb{Z}^{\mc{N}}\setminus\{\bs{0}\}}}\prod_{i=1}^{\mc{M}}\|L_{i}\bs{q}\|^{-1} & \leq\sum_{k=0}^{\infty}2^{k+1}\# M\left(\bs{L},2^{-k},\frac{1}{2},\bs{Q}\right)\nonumber \\
 & =\sum_{k=0}^{\left\lfloor\log_{2}\left(\frac{Q^{\mc{N}}}{\phi(Q)}\right)\right\rfloor}2^{k+1}\# M\left(\bs{L},2^{-k},\frac{1}{2},\bs{Q}\right).
\end{align}

We use Proposition \ref{prop:cor1} to estimate the right-hand side of (\ref{eq:cor2eq1}). We need $T^{\mc{M}}/\varepsilon\geq e^{\mc{M}}$, i.e., $2^{k-\mc{M}}\geq e^{\mc{M}}$. Hence, we split the sum in (\ref{eq:cor2eq1}) into two parts, one for $2^{k-\mc{M}}<e^{\mc{M}}$ and one for $2^{k-\mc{M}}\geq e^{\mc{M}}$. We find that
\begin{align}
 & \sum_{\substack{\bs{q}\in\prod_{j=1}^{\mc{N}}[-Q_{j},Q_{j}]\\ \cap\ \mb{Z}^{\mc{N}}\setminus\{\bs{0}\}}}\prod_{i=1}^{\mc{M}}\|L_{i}\bs{q}\|^{-1}\leq\sum_{k=0}^{\left\lfloor\mc{M}\left(1+1/\log 2\right)\right\rfloor}2^{k+1}\# M\left(\bs{L},2^{-k},\frac{1}{2},\bs{Q}\right)\nonumber \\
 & +\sum_{k=\left\lceil\mc{M}\left(1+1/\log 2\right)\right\rceil}^{\left\lfloor\log_{2}\left(\frac{Q^{\mc{N}}}{\phi(Q)}\right)\right\rfloor}2^{k+1}\# M\left(\bs{L},2^{-k},\frac{1}{2},\bs{Q}\right)\nonumber \\
 & \ll_{\mc{M},\mc{N}}Q^{\mc{N}}+\sum_{k=\left\lceil\mc{M}\left(1+1/\log 2\right)\right\rceil}^{\left\lfloor\log_{2}\left(\frac{Q^{\mc{N}}}{\phi(Q)}\right)\right\rfloor}2^{k+1}(k-\mc{M})^{\mc{M}-1}\left(2^{-k}Q^{\mc{N}}+\left(\frac{2^{-k}Q^{\mc{N}}}{\phi(Q)}\right)^{\frac{\mc{M}+\mc{N}-1}{\mc{M}+\mc{N}}}\right)\label{eq:cor2eq2.1}  \\
 & \ll_{\mc{M},\mc{N}}\sum_{k=0}^{\left\lfloor\log_{2}\left(\frac{Q^{\mc{N}}}{\phi(Q)}\right)\right\rfloor}k^{\mc{M}-1}\left(Q^{\mc{N}}+2^{\frac{k}{\mc{M}+\mc{N}}}\left(\frac{Q^{\mc{N}}}{\phi(Q)}\right)^{\frac{\mc{M}+\mc{N}-1}{\mc{M}+\mc{N}}}\right)\label{eq:cor2eq2.2},
\end{align}
where in (\ref{eq:cor2eq2.1}) we estimate $\# M\left(\bs{L},2^{-k},1/2,\bs{Q}\right)$ with $Q^{\mc{N}}$ for $k\leq \left\lfloor\mc{M}\left(1+1/\log 2\right)\right\rfloor$. Note that $Q\geq 2$ ensures that (\ref{eq:cor2eq2.1})$\Rightarrow$(\ref{eq:cor2eq2.2}). The required result follows from (\ref{eq:cor2eq2.2}) combined with the trivial estimates $\sum_{k=0}^{K}k^{\mc{M}-1}\ll_{\mc{M}} K^{\mc{M}}$ and $\sum_{k=0}^{K}k^{\mc{M}-1}2^{\frac{k}{\mc{M}+\mc{N}}}\ll_{\mc{M},\mc{N}} K^{\mc{M}-1}2^{\frac{K}{\mc{M}+\mc{N}}}$.

\addcontentsline{toc}{section}{\bibname}
\bibliographystyle{plain}
\bibliography{Bibliography}

\end{document}